\pgfplotsset{compat=newest}
\newtheorem{theorem}{Theorem}[section]
\newtheorem{lemma}[theorem]{Lemma}
\theoremstyle{definition}
\newtheorem{definition}[theorem]{Definition}
\newtheorem{example}[theorem]{Example}
\theoremstyle{remark}
\newtheorem{remark}[theorem]{Remark}
\numberwithin{equation}{section}
\providecommand{\keywords}[1]{\textbf{\textit{Keywords:}} #1}
\providecommand{\subjclass}[1]{\textbf{\textit{MSC2020:}} #1}
\begin{document}

\nocite{*} % this command forces all references in template.bib to be printed in the bibliography
%\maketitle

\title{Initial Value Problems for Some Functional Equations and the PENLP}

\author{Hailu Bikila Yadeta \\ email: \href{mailto:haybik@gmail.com}{haybik@gmail.com} }
  \affil{Salale University, College of Natural Sciences, Department of Mathematics\\ Fiche, Oromia, Ethiopia}
\date{\today}
\maketitle

\noindent
\begin{abstract}

\noindent In this paper, we study initial value problems for a class of functional equations. We introduce the concept of appropriate initial sets to enable the unique extension of an initial function into a solution defined over larger domains. Our analysis characterizes the structure and size of these initial sets, illustrating how functions specified on them can be uniquely extended to broader regions. Furthermore, we propose the \emph{Principle of Exclusion of Neighborhoods of Limit Points} (PENLP)— a novel conceptual framework that elucidates the behavior of solutions near limit points and their impact on the extension process. We demonstrate that the initial sets must avoid the limit points of the functional equations to ensure uniqueness and well-posedness. The functional equations examined in this study include:
$$ y(x+1)=  y(bx),\quad b \neq 0, $$
$$ y(x)=y(2x),\quad y(x)=y(-2x),\quad  y(3x)=y(x)+y(2x), $$
as well as the equations characterizing even functions and odd functions.
\end{abstract}

\noindent\keywords{ Functional equation, initial value problem for functional equation, limit points of FE, scaling type functional equations, shift type functional equations, mixed type functional equation, iterations }\\
\subjclass{Primary 39B06, 39B20,39B32}\\
\subjclass{Secondary 39A10, 39A05}

\section{Introduction}
According to Acz\'{e}l, a functional equation is an equation $A_1 = A_2$ between two terms $A_1$ and $A_2$, which involves $k$ independent variables $x_1, x_2, \ldots, x_k$ and $n \geq 1$ unknown functions $F_1, F_2, \ldots, F_n$, each depending on variables $j_1, j_2, \ldots, j_{n}$, along with a finite number of known functions. See \cite{JA}.

In this paper, we study initial value problems for certain functional equations in one independent variable $x$, involving shifts and scalings in $x$, with a single unknown function $y$. Specifically, we analyze the forms of initial sets such that, when initial functions are defined on these sets, the problem admits a unique solution—an extension of the initial function to a larger domain. The initial set is not unique but typically takes a specific form. In selecting this initial set, we avoid neighborhoods of certain points called limit points of the functional equation within the initial set. However, not all functional equations have limit points.

The principle of exclusion of neighborhoods of limit points (PENLP) is primarily essential for guaranteeing a unique solution to an initial value problem for a functional equation with an arbitrary initial function.

Consider the functional equation of all periodic functions of fundamental period equal to $1$,  which are given by
\begin{equation}\label{eq:1periodicequation}
   y(x+1)= y(x), \quad x \in\mathbb{ R },
\end{equation}
and whose general solution is given by $y(x) = \mu(x) $, where $\mu $ is arbitrary 1-periodic function. The constant function $y(x)= c $ can be accounted as a special solution of equation (\ref{eq:1periodicequation}), since a constant function is inherently periodic with any period. See, for example, \cite{AB}. If an initial function $f$ is set on some unit interval of the form $[x_0,x_0+1)$, we have a unique solution of the functional equation (\ref{eq:1periodicequation}), which is determined by the 1-periodic extension of $f$. As such the initial value problem
\begin{equation}\label{eq:IVPfor1periodic}
  y(x+1)=y(x), \, x \in  \mathbb{ R}, \quad  y(x)=f(x),  0 \leq x < 1,
\end{equation}
is given by
\begin{equation}\label{eq:solnforIVP1periodic}
  y(x)= f(\{x\}),
\end{equation}
where $ \{x\} $ is the fraction part of $x$.
If the initial value problem is given on some interval of length more than a unit-say, an interval of the form $[x_0,x_0+2) $, the initial value problem will be inconsistent unless and otherwise the value of the initial function $f$ is set in such a way that its value on $[x_0+1,x_0+2]$ matches with  the periodic extension of its value on $[x_0,x_0 +1)$. This requirement introduces redundancy in the data and is generally undesirable..

\begin{definition}
Let us define the set of all real-valued function whose domain is the set of real numbers $\mathbb{R}$ by
  \begin{equation}\label{eq:realvaluedfunctiononR}
    \mathcal{F}:= \{ f: \mathbb{R} \rightarrow        \mathbb{R}   \}
  \end{equation}
\end{definition}

\begin{definition}
  Define the \textbf{scaling operator} on a function  $y \in \mathcal{F }$ as
  \begin{equation}\label{eq:scaling}
    S^hy(x):=y(hx)
  \end{equation}
\end{definition}

We describe some properties of the scaling operator
\begin{itemize}
  \item $S^1=I $, where $I$ is the identity operator.
 \item  The $n$-fold composition of scaling operators yield:
  $$  S^{a_1}  \circ       S^{a_2} .... \circ S^{a_n}= S^{a_1a_2...a_n}  $$
 \item In particular,
  $$ S^{a}  \circ       S^{a}\circ .... \circ S^{a}= S^{a^n} . $$
\end{itemize}
    The \textbf{shift-scaling class of functional equations} includes:
 \begin{itemize}
   \item \textbf{Scaling types}: Only scaling operators are involved, for example,
   $$y(3x)=y(2x)+y(x). $$
   \item  \textbf{Shift type} : Only shift operators are involved in the equation, for example, $$y(x+2)=y(x+1)+y(x). $$
   \item  \textbf{ Mixed types} Both scaling shift operators are involved, for instance,
    $$y(x+1)=y(2x)+y(x) . $$
 \end{itemize}
In this paper, we investigate the initial value problem for a basic prototype class of shift-scaling functional equations. We propose a novel approach by identifying a suitable initial set on which the initial function, defined on this set, can be uniquely extended to a larger domain.

\section{The Functional Equation $y(x+1) = y(bx),\, b \neq 0 $}
In this section we consider the functional equation
\begin{equation}\label{eq:xplus1bx}
  y(x+1)= y(bx),\quad b \neq 0.
\end{equation}
 If we fix  $b=1$ in equation (\ref{eq:xplus1bx}), we get the usual homogeneous linear difference equation of the first order with constant coefficients $    y(x+1)=  y(x), $ that was discussed in the introduction section. In this section, we consider the cases $0 < b< 1 $, $b>1 $, and $b<0$. We formulate the possible set $S$ such that the initial value problem for (\ref{eq:xplus1bx}) has a unique well-defined solution when the initial data is set on $S$. Such a set may not unique. But we state the possible forms of the sets such that the initial value problem for the functional equation (\ref{eq:xplus1bx}) with  initial data defined on such  sets is well-posed. Let us denote the coordinates of the point of intersection  of the lines $y= x+1$ and $y=bx$ by
 $$( x^*,  bx^*) , \text{ where }   x^* =\frac{1}{b-1}. $$
 We have
 $$ \begin{cases}
       & x^* < 0,     \mbox{  if } 0 < b < 1, \\
       & x^* > 0,      \mbox{ if }  b > 1 .
    \end{cases}$$

\begin{figure}[h]
\centering
\begin{subfigure}[b]{0.32\textwidth}
\centering
\begin{tikzpicture}[scale=0.8]
\begin{axis}[
    axis lines = middle,
    xlabel = $x$,
    ylabel = $y$,
    xmin = -3, xmax = 3,
    ymin = -3, ymax = 3,
    grid = both,
    grid style = {line width = 0.1pt, draw = gray!20},
    title = {Case: $0 < b < 1$ },
    legend style = {at={(0.5,-0.2)}, anchor=north}
]

% Line with mild slope (0 < b < 1)
\addplot[domain=-3:3, thick, blue] {0.5*x};
% Line y = x + 1
\addplot[domain=-3:3, thick, red] {x + 1};

% Mark intersection point in third quadrant
\addplot[only marks, mark=*, mark size=2pt, black] coordinates {(-2, -1)};

\legend{$y = bx$, $y = x + 1$}
\end{axis}
\end{tikzpicture}
\caption{Intersection in third quadrant}
\end{subfigure}
\hfill
\begin{subfigure}[b]{0.32\textwidth}
\centering
\begin{tikzpicture}[scale=0.8]
\begin{axis}[
    axis lines = middle,
    xlabel = $x$,
    ylabel = $y$,
    xmin = -3, xmax = 3,
    ymin = -3, ymax = 3,
    grid = both,
    grid style = {line width = 0.1pt, draw = gray!20},
    title = {Case: $b = 1$},
    legend style = {at={(0.5,-0.2)}, anchor=north}
]

% Line with slope 1
\addplot[domain=-3:3, thick, blue] {x};
% Line y = x + 1 (parallel)
\addplot[domain=-3:3, thick, red] {x + 1};

\legend{$y = x$, $y = x + 1$}
\end{axis}
\end{tikzpicture}
\caption{Parallel lines (no intersection)}
\end{subfigure}
\hfill
\begin{subfigure}[b]{0.32\textwidth}
\centering
\begin{tikzpicture}[scale=0.8]
\begin{axis}[
    axis lines = middle,
    xlabel = $x$,
    ylabel = $y$,
    xmin = -3, xmax = 3,
    ymin = -3, ymax = 3,
    grid = both,
    grid style = {line width = 0.1pt, draw = gray!20},
    title = {Case: $b > 1$ },
    legend style = {at={(0.5,-0.2)}, anchor=north}
]

% Line with steep slope (b > 1)
\addplot[domain=-3:3, thick, blue] {2*x};
% Line y = x + 1
\addplot[domain=-3:3, thick, red] {x + 1};

% Mark intersection point in fourth quadrant
\addplot[only marks, mark=*, mark size=2pt, black] coordinates {(-1, 0)};

\legend{$y = bx$, $y = x + 1$}
\end{axis}
\end{tikzpicture}
\caption{Intersection in fourth quadrant}
\end{subfigure}

\caption{Intersection behavior of $y = bx$ and $y = x + 1$ for different values of $b$}
\label{fig:intersection_cases}
\end{figure}
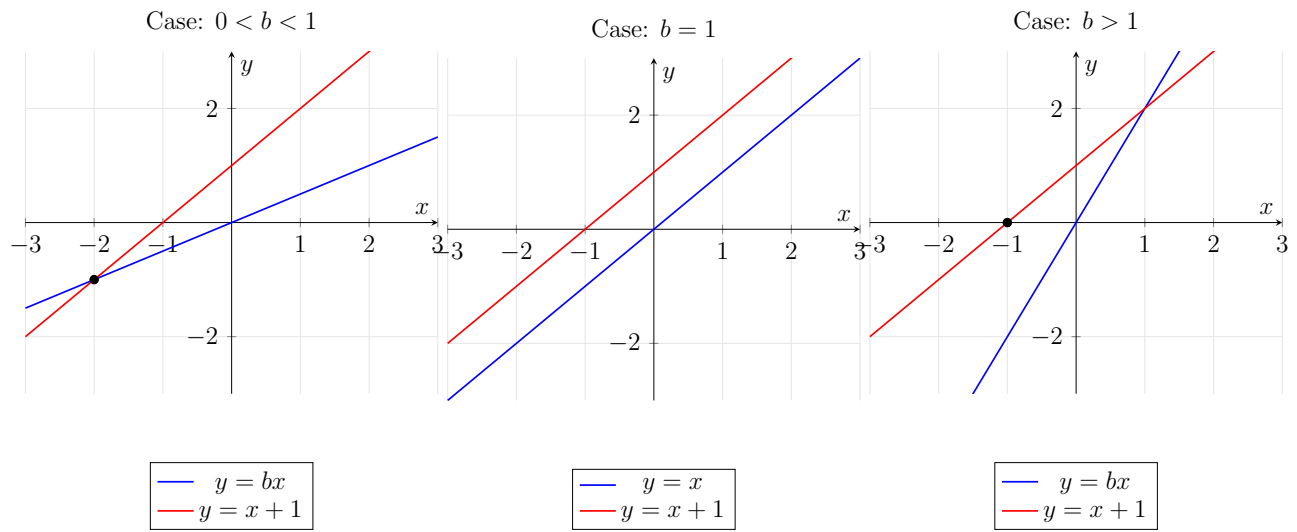

\subsection{The functional equation  $ y(x+1)=y(bx), 0 < b < 1 $}

\begin{theorem}
  For $0 < b< 1$, the forward recurrence relation with the initial point $x_0 > x^* $,
  \begin{equation}\label{eq:forwardrecurrence}
  x_{n+1} = \frac{1}{b} x_n + \frac{1}{b},
  \end{equation}
form a  strictly increasing  sequence that diverges to positive infinity.
 \end{theorem}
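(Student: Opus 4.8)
The plan is to exploit the fact that the recurrence (\ref{eq:forwardrecurrence}) is affine with multiplier $1/b$, and that its unique fixed point is precisely the intersection abscissa $x^*$ introduced above. Solving $x = \tfrac{1}{b}x + \tfrac{1}{b}$ returns $x = \tfrac{1}{b-1} = x^*$, so $x^*$ plays a dual role: it is both the abscissa of the crossing of $y=x+1$ and $y=bx$ and the equilibrium of the map $x \mapsto \tfrac{1}{b}x + \tfrac{1}{b}$. This observation is the organizing idea of the argument.

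First I would change variables by setting $u_n := x_n - x^*$ to center the dynamics at the fixed point. Subtracting the fixed-point identity $x^* = \tfrac{1}{b}x^* + \tfrac{1}{b}$ from (\ref{eq:forwardrecurrence}) collapses the affine recurrence into the purely geometric one
$$u_{n+1} = \frac{1}{b}\,u_n,$$
which immediately yields the closed form $u_n = b^{-n} u_0 = b^{-n}(x_0 - x^*)$.

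Next I would read off the two asserted properties directly from this closed form. Since $0 < b < 1$, the ratio $1/b > 1$; and since the initial point satisfies $x_0 > x^*$, we have $u_0 = x_0 - x^* > 0$. Strict monotonicity then follows from
$$x_{n+1} - x_n = u_{n+1} - u_n = \left(\frac{1}{b} - 1\right) u_n = \left(\frac{1}{b}-1\right) b^{-n}(x_0 - x^*) > 0,$$
each of the three factors being strictly positive. Divergence follows because $b^{-n} \to +\infty$, whence $x_n = x^* + b^{-n}(x_0 - x^*) \to +\infty$.

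I do not anticipate a genuine obstacle here; the computation is routine once the shift to the fixed point is made. The only point requiring care is the sign bookkeeping, namely confirming that it is the hypothesis $x_0 > x^*$ (rather than $x_0 < x^*$) that forces $u_0 > 0$ and hence divergence to $+\infty$ rather than $-\infty$. Here the earlier observation that $x^* < 0$ when $0 < b < 1$ guarantees that the admissible initial region $\{x_0 > x^*\}$ is a nonempty half-line containing, in particular, all nonnegative reals, so the statement is not vacuous.
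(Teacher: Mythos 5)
Your proposal is correct and follows essentially the same route as the paper: both arguments rest on the closed-form solution $x_n = b^{-n}(x_0 - x^*) + x^*$, which the paper simply states and you derive by centering the recurrence at its fixed point $x^* = \tfrac{1}{b-1}$. The only difference is that you spell out the details the paper calls ``evident'' (the telescoping to the geometric recurrence, the sign of $x_{n+1}-x_n$, and the limit), which is a welcome but not substantively different elaboration.
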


\begin{proof}
  The  solution of the forward recurrence relation (\ref{eq:forwardrecurrence}) with initial point $x_0$ is given by
  \begin{equation}\label{eq:soltoforwardrecurrence}
      x_n = b^{-n}(x_0-x^*)+x^*.
  \end{equation}
The  divergence and strict increasing conditions of the sequence are evident from equation (\ref{eq:soltoforwardrecurrence}), given that  $0 < b< 1$,and  $x_0 > x^* $.
\end{proof}

\begin{theorem}
  For $0 < b < 1$  the sequence determined by the initial point $x_0 > x^* $, and the backward recurrence relation
  \begin{equation}\label{eq:backwardrecurrence}
    x_{-(n+1)} = b x_{-n} -1
  \end{equation}
is strictly decreasing and it converge to $x^*$.
\end{theorem}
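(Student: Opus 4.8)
The plan is to obtain a closed-form expression for the backward sequence, exactly paralleling the treatment of the forward recurrence in the preceding theorem, and then read off both the strict monotonicity and the limit directly from that formula. First I would verify that $x^*$ is the unique fixed point of the affine map $g(x) = bx - 1$ governing the recurrence: solving $x = bx - 1$ gives $x(1-b) = -1$, i.e. $x = 1/(b-1) = x^*$, which is consistent with $x^*$ being the abscissa of the intersection of $y = x+1$ and $y = bx$.

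Next I would linearize the recurrence about this fixed point. Setting $z_n := x_{-n} - x^*$ and substituting into $x_{-(n+1)} = b x_{-n} - 1$, the constant term cancels because $b x^* - 1 - x^* = x^*(b-1) - 1 = 0$, leaving the purely geometric relation $z_{n+1} = b z_n$. Hence $z_n = b^n z_0$, and therefore
$$ x_{-n} = b^{\,n}(x_0 - x^*) + x^*. $$
This is the backward analogue of equation (\ref{eq:soltoforwardrecurrence}), with the growth factor $b^{-n}$ replaced by the decay factor $b^{n}$, as one expects from running the recurrence in the opposite direction.

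From this closed form both conclusions are immediate. Since $0 < b < 1$ the terms $b^n$ are strictly decreasing, and the hypothesis $x_0 > x^*$ gives $x_0 - x^* > 0$; computing the consecutive difference yields $x_{-(n+1)} - x_{-n} = b^{n}(x_0 - x^*)(b-1) < 0$, so the sequence is strictly decreasing. Letting $n \to \infty$ and using $b^n \to 0$ gives $x_{-n} \to x^*$, establishing convergence to the fixed point. As an alternative that avoids solving the recurrence, one could argue that $g$ is a contraction with Lipschitz constant $b < 1$, whence $\lvert x_{-n} - x^*\rvert = b^n \lvert x_0 - x^*\rvert \to 0$, while strict monotonicity follows because $g$ is increasing with $g(x) < x$ for every $x > x^*$.

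There is no genuine obstacle here; the only point demanding care is the sign bookkeeping. Specifically, I must confirm that $x_0 - x^* > 0$ (which is exactly where the hypothesis $x_0 > x^*$ enters) is combined with $b - 1 < 0$ so that the consecutive difference is negative, giving a strictly decreasing rather than increasing sequence, and that the limit is the finite fixed point $x^*$ rather than an unbounded one. Getting these signs right is what distinguishes the backward case ($b^n \downarrow 0$, convergence to $x^*$) from the forward case ($b^{-n} \uparrow \infty$, divergence) treated previously.
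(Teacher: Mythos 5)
Your proposal is correct and follows essentially the same route as the paper: both rest on the closed form $x_{-n} = b^{n}(x_0 - x^*) + x^*$, from which strict decrease and convergence to $x^*$ follow immediately since $0<b<1$ and $x_0 > x^*$. The only difference is that you derive this formula (via the substitution $z_n = x_{-n}-x^*$) and check the sign of $x_{-(n+1)}-x_{-n}$ explicitly, whereas the paper simply states the formula and declares the conclusions evident.
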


\begin{proof}
  The  solution of the backward recurrence relation (\ref{eq:backwardrecurrence}) with initial point $x_0$ is given by
  \begin{equation}\label{eq:soltobackwardrecurrence}
    x_{-n} = b^n (x_0-x^*)+x^*
  \end{equation}
 The convergence and strict decreasing conditions of the sequence are evident from the given conditions  $0 < b< 1$,  $x_0 > x^* $, and equation (\ref{eq:soltobackwardrecurrence}).
 \end{proof}

Let $x_n(x_0)$  is  the expression  given in (\ref{eq:soltoforwardrecurrence}), and  $x_{-n}(x_0)$  the expression given in (\ref{eq:soltobackwardrecurrence}). We have the invertibility relation:
\begin{equation}\label{eq:inversecomposition}
  x_{-k}(x_k(x))= x = x_k(x_{-k}(x)),\quad x \in \mathbb{R} .
\end{equation}

\begin{definition}
  Let $0< b< 1$. For $ k \in\mathbb{ Z} $ define the intervals:
\begin{align}\label{eq:Ikofxnotandb}
  I_k(x_0,b) & := [bx_k,  bx_{k + 1})= [x_{k-1}+1,  x_{k}+1 ) \nonumber \\
   & = \left[\frac{(x_0-x^*)}{b^{k-1}}+bx^*,\frac{(x_0-x^*)}{b^{k}}+ x^* +1\right).
\end{align}
\end{definition}

\begin{lemma}
 If $x_R \in I_{k}(x,b) $ then $ b(x_R-1) \in   I_{k-1}(x,0), \quad  k\in \mathbb{Z}. $
\end{lemma}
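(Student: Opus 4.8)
The plan is to exploit the two equivalent descriptions of the interval $I_k$ recorded in its definition (\ref{eq:Ikofxnotandb}), namely
\[
I_k(x,b) = [bx_k,\, bx_{k+1}) = [x_{k-1}+1,\, x_k+1),
\]
and to follow how the affine map $t \mapsto b(t-1)$ acts on the hypothesis $x_R \in I_k(x,b)$. Since this map is the composition of a unit leftward shift with a scaling by the factor $b$, it should carry the ``second form'' of $I_k$ (expressed through $x_{k-1}+1$ and $x_k+1$) onto the ``first form'' of the neighbouring interval $I_{k-1}$ (expressed through $bx_{k-1}$ and $bx_k$). The whole argument is an inequality chase, so the care required is confined to the index bookkeeping and to the sign of the scaling factor.

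First I would unpack the hypothesis using the second form: $x_R \in I_k(x,b)$ means $x_{k-1}+1 \le x_R < x_k + 1$. Subtracting $1$ throughout gives $x_{k-1} \le x_R - 1 < x_k$. Next I would multiply through by $b$; because $0 < b < 1$ is strictly positive, the scaling $t \mapsto bt$ is strictly increasing and preserves the orientation of the inequalities, yielding
\[
bx_{k-1} \le b(x_R - 1) < bx_k,
\]
that is, $b(x_R - 1) \in [bx_{k-1},\, bx_k)$.

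Finally I would match this half-open interval to the target. Reading the definition (\ref{eq:Ikofxnotandb}) at the shifted index $j = k-1$ gives $[bx_{k-1},\, bx_{(k-1)+1}) = [bx_{k-1},\, bx_k)$, which is exactly the interval just obtained; this identifies $b(x_R-1)$ as a member of the $(k-1)$-indexed interval of the family based at $x$, i.e. $b(x_R-1) \in I_{k-1}(x,0)$, as claimed. The step I expect to require the most attention is this final identification: one must confirm that decrementing the index from $k$ to $k-1$ lines up the endpoints correctly, which rests on the endpoint relation $bx_j = x_{j-1}+1$ implicit in the backward recurrence (\ref{eq:backwardrecurrence}). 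Once that endpoint bookkeeping is verified, the membership follows immediately and the proof closes.
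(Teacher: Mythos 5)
Your proposal is correct and takes essentially the same route as the paper's own proof: both reduce the claim to the chain $x_{k-1} \le x_R - 1 < x_k$ and then multiply by the positive factor $b$ to land in $[bx_{k-1}, bx_k) = I_{k-1}$, the only cosmetic difference being that you invoke the endpoint identity $bx_k = x_{k-1}+1$ at the outset (by starting from the second form of $I_k$ in (\ref{eq:Ikofxnotandb})) whereas the paper applies it mid-chain via the backward recurrence (\ref{eq:backwardrecurrence}). You also correctly read the lemma's ``$I_{k-1}(x,0)$'' as a typo for $I_{k-1}(x,b)$, which matches the paper's intent and its proof.
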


\begin{proof}
  Let $x_R \in  I_{k}(x_0,b) , k \in \mathbb{Z} $. Then by forward recurrence relation (\ref{eq:forwardrecurrence}) and backward recurrence relation (\ref{eq:backwardrecurrence}) we have
\begin{align*}
  bx_{k} & \leq x_R  < 1+ x_{k},   \\
  bx_{k}-1  & \leq   x_R-1 < x_{k},  \\
  x_{k-1} & \leq x_R-1  <  x_{k}, \\
  b x_{k-1} & \leq  b(x_R-1)  < b x_{k}.
\end{align*}
Therefore $ b(x_R-1) \in I_{k-1}(x_0,b)  $.
\end{proof}

\begin{lemma}
  If  $x_L \in  I_{-k}(x_0,b) , k \in \mathbb{Z}   $. Then $ \left(1+\frac{x_L}{b}\right)  \in   I_{1-k}(x_0,b)  $.
\end{lemma}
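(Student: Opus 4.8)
The plan is to treat this statement as the exact inverse of the preceding lemma. There the map $x_R \mapsto b(x_R-1)$ carried $I_k$ into $I_{k-1}$; here the map $x_L \mapsto 1 + \frac{x_L}{b}$ is precisely its inverse and should therefore carry $I_{-k}$ one step upward into $I_{1-k}$. I would prove the claim directly by unwinding the interval definition and applying elementary order operations, rather than formally inverting the previous lemma.

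First I would use the convenient form $I_{-k}(x_0,b) = [bx_{-k}, bx_{-k+1})$, so that membership $x_L \in I_{-k}(x_0,b)$ reads
\begin{equation*}
bx_{-k} \le x_L < bx_{-k+1}.
\end{equation*}
Since $0 < b < 1$, in particular $b > 0$, dividing through by $b$ preserves the order and gives $x_{-k} \le \frac{x_L}{b} < x_{-k+1}$; adding $1$ throughout yields $1 + x_{-k} \le 1 + \frac{x_L}{b} < 1 + x_{-k+1}$. The final step is to recognize the outer bounds as endpoints of the target interval: the forward recurrence (\ref{eq:forwardrecurrence}) rearranges to the identity $1 + x_j = bx_{j+1}$, valid for every index $j$, and substituting $j=-k$ and $j=-k+1$ converts the displayed inequality into
\begin{equation*}
bx_{1-k} \le 1 + \frac{x_L}{b} < bx_{2-k},
\end{equation*}
which is exactly the statement $1 + \frac{x_L}{b} \in [bx_{1-k}, bx_{2-k}) = I_{1-k}(x_0,b)$.

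The argument is short, so the only real obstacle is bookkeeping with the index shifts: one must hold fixed the convention $I_m = [bx_m, bx_{m+1})$ and check that the two substitutions into $1 + x_j = bx_{j+1}$ land on the correct pair $x_{1-k}, x_{2-k}$. I expect no analytic difficulty, since every move reduces to dividing by the positive constant $b$ or translating by $1$, with the recurrence identity supplying the single structural fact that aligns the endpoints. As a consistency check, composing this map with the map $x_R \mapsto b(x_R-1)$ of the previous lemma should return the identity, in agreement with the invertibility relation (\ref{eq:inversecomposition}).
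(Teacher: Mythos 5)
Your proposal is correct and is essentially identical to the paper's own proof: both unwind the membership $bx_{-k} \le x_L < bx_{1-k}$, divide by the positive constant $b$, add $1$, and then use the recurrence identity $1 + x_j = bx_{j+1}$ to recognize the resulting bounds as the endpoints of $I_{1-k}(x_0,b)$. The only difference is presentational — you state the identity $1+x_j = bx_{j+1}$ explicitly, whereas the paper applies it silently in its last displayed line — so there is nothing further to add.
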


\begin{proof}
   Let $x_L \in  I_{-k}(x_0,b) , k \in \mathbb{N}   $. Then
\begin{align*}
  bx_{-k} & \leq x_L  <  bx_{1-k},  \\
  x_{-k}  & \leq \frac{x_L}{b}  <  x_{1-k},  \\
  1+ x_{-k} & \leq \left(1+\frac{x_L}{b}\right) < 1+ x_{1-k}, \\
  b x_{1-k} & \leq \left(1+\frac{x_L}{b}\right)  < b x_{2-k}.
\end{align*}
Therefore $ \left(1+\frac{x_L}{b}\right) \in I_{1-k}(x_0,b)  $.
\end{proof}

\begin{lemma}
 Let's denote length of the interval  $I_k(x_0,b)$  by $ l(I_k(x_0,b)) $. We have the following results:
  \begin{itemize}
    \item  The length of the interval  $I_k(x_0,b)$ is given by
    \begin{equation}\label{eq:lengthofIk}
   l(I_k(x_0,b))= \frac{(x_0-x^*)(1-b)}{b^k}.
\end{equation}
    \item We observe that $ I_k(x_0,b), k=0,1,2,... $ form a sequence of pairwise disjoint intervals  with
\begin{equation}\label{eq:rightunion}
     [x_0 +1 ,\infty)= [bx_1, \infty)= \bigcup_{k=1}^\infty I_k(x_0,b).
 \end{equation}
  \end{itemize}
 \end{lemma}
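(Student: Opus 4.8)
The plan is to handle the two assertions independently. The length formula is a one-line computation from the closed form for $x_n$, while the disjoint-union statement is a telescoping argument resting on the strict monotonicity and divergence of the forward sequence already proved in this section.

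For the length, I would begin from the representation $I_k(x_0,b)=[bx_k,bx_{k+1})$, so that $l(I_k(x_0,b))=b\,(x_{k+1}-x_k)$. Inserting the explicit solution $x_n=b^{-n}(x_0-x^*)+x^*$, the additive constant $x^*$ cancels and the difference becomes $(x_0-x^*)\bigl(b^{-(k+1)}-b^{-k}\bigr)$. Pulling out $b^{-k}$ and using $b^{-1}-1=(1-b)/b$ gives $x_{k+1}-x_k=(x_0-x^*)(1-b)\,b^{-k-1}$; multiplying by the leading factor $b$ produces exactly $\frac{(x_0-x^*)(1-b)}{b^{k}}$. The same value follows from the alternative form $I_k=[x_{k-1}+1,x_k+1)$ by computing $x_k-x_{k-1}$ directly, and since the closed form for $x_n$ is valid for every integer $n$, the formula holds for all $k\in\mathbb{Z}$.

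For the disjoint-union claim, I would first note that the forward recurrence relation gives $bx_{k+1}$ as simultaneously the right endpoint of $I_k$ and the left endpoint of $I_{k+1}$, so that consecutive half-open intervals abut without overlap or gap. By the theorem on the forward recurrence, the sequence $\{x_k\}_{k\ge 0}$ is strictly increasing and diverges to $+\infty$; since $b>0$, the scaled sequence $\{bx_k\}$ is likewise strictly increasing and divergent. Disjointness then follows because for $k<j$ the right endpoint $bx_{k+1}$ of $I_k$ does not exceed the left endpoint $bx_j$ of $I_j$. The union telescopes, $\bigcup_{k=1}^{\infty}[bx_k,bx_{k+1})=[bx_1,\sup_k bx_k)$, and divergence yields $\sup_k bx_k=+\infty$. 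Finally, $bx_1=x_0+1$ by the forward recurrence, so the union equals $[x_0+1,\infty)$, as claimed.

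I do not expect a genuine obstacle. The only places needing care are the bookkeeping of the half-open convention, so that each abutting endpoint is covered exactly once and no gap is introduced, and the explicit use of divergence (not merely monotonicity) of $\{x_k\}$ to guarantee that the union exhausts the entire ray $[x_0+1,\infty)$ rather than terminating at a finite supremum.
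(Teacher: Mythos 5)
Your proof is correct. There is, in fact, nothing in the paper to compare it against: the lemma is stated without proof, so your argument fills a gap rather than duplicating or diverging from an existing one. What you do---compute $l(I_k)=b\,(x_{k+1}-x_k)$ from the closed form $x_n=b^{-n}(x_0-x^*)+x^*$, and get the union by noting that $bx_{k+1}$ is simultaneously the open right endpoint of $I_k$ and the closed left endpoint of $I_{k+1}$, then invoke strict monotonicity and divergence of $\{bx_k\}$ to telescope to $[bx_1,\infty)=[x_0+1,\infty)$---is precisely the argument the paper's two preceding recurrence theorems are set up to supply, and your check that $bx_1=x_0+1$ follows from the forward recurrence is the right way to justify the first equality in the displayed union. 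The one point you should state explicitly rather than leave implicit is the hypothesis $x_0>x^*$: it is assumed in the recurrence theorems you cite but is absent from the lemma's own statement, and it is exactly what makes $x_0-x^*>0$, hence the lengths positive, the intervals $[bx_k,bx_{k+1})$ non-empty, and the sequence $\{x_k\}$ increasing and divergent; without it both bullets of the lemma are false (the intervals are empty). With that hypothesis made explicit, your endpoint bookkeeping for the half-open intervals and your appeal to divergence (not merely monotonicity) to exhaust the ray are complete and careful.
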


\begin{lemma}
 Let us denote length of the interval  $I_{-k}(x_0,b)$  by $ l(I_{-k}(x_0,b)) $. We have the following results:
\begin{itemize}
  \item The length of the interval  $I_{-k}(x_0,b)$  is given by:
\begin{equation}\label{eq:lengthofIminusk}
  l(I_{-k}(x_0,b))= b^k(1-b)(x_0-x^*).
\end{equation}
  \item We observe that $I_{-k}(x_0,b), k=1,2,... $ form a sequence of pairwise disjoint intervals  with
  \begin{equation}\label{eq:leftunion}
     (x^*, bx_0)= \bigcup_{k=1}^\infty I_{-k}(x_0,b).
 \end{equation}
\end{itemize}
\end{lemma}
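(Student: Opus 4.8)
The plan is to prove the two assertions separately, using the closed form \eqref{eq:soltobackwardrecurrence} for the backward iterates together with the fact that $x_{-n}\to x^*$ (immediate from \eqref{eq:soltobackwardrecurrence} since $0<b<1$). The length is a one-line computation, while the covering statement follows by telescoping the consecutive intervals and passing to the limit.

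For the length, recall from \eqref{eq:Ikofxnotandb} that $I_{-k}(x_0,b)=[bx_{-k},bx_{1-k})$, so
\[
 l\bigl(I_{-k}(x_0,b)\bigr)=bx_{1-k}-bx_{-k}=b\bigl(x_{-(k-1)}-x_{-k}\bigr).
\]
Substituting $x_{-(k-1)}=b^{k-1}(x_0-x^*)+x^*$ and $x_{-k}=b^{k}(x_0-x^*)+x^*$ and factoring yields $b\cdot b^{k-1}(1-b)(x_0-x^*)=b^{k}(1-b)(x_0-x^*)$, which is the asserted formula \eqref{eq:lengthofIminusk}. Since $x_0>x^*$ and $0<b<1$, every such length is positive and the lengths decay geometrically in $k$.

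For disjointness and the covering, the key observation is that the left endpoint $bx_{-k}$ of $I_{-k}(x_0,b)$ equals the right endpoint of $I_{-(k+1)}(x_0,b)$. Because $t\mapsto bt$ with $b>0$ preserves the strict monotonicity of $\{x_{-k}\}$, the sequence $\{bx_{-k}\}$ is strictly decreasing; hence the half-open intervals abut without overlap and are pairwise disjoint. Telescoping the first $N$ of them gives $\bigcup_{k=1}^{N}I_{-k}(x_0,b)=[bx_{-N},bx_0)$, and letting $N\to\infty$, with $bx_{-N}\downarrow bx^*$, yields
\[
 \bigcup_{k=1}^{\infty}I_{-k}(x_0,b)=(bx^*,\,bx_0).
\]

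The single delicate point --- and the step I expect to require a correction --- is the left endpoint of this union. The telescoping produces $bx^*$, not $x^*$, and since $bx^*=x^*+1\neq x^*$ for $0<b<1$, the interval $(x^*,bx_0)$ stated in \eqref{eq:leftunion} overshoots by the piece $(x^*,bx^*]$ of length $1$, which none of the $I_{-k}(x_0,b)$ meet. Two consistency checks support the endpoint $bx^*$: the right endpoint $bx_0$ of the union is already $b$-scaled, so the left endpoint should be $bx^*$ by symmetry; and summing \eqref{eq:lengthofIminusk} gives $\sum_{k=1}^{\infty}b^{k}(1-b)(x_0-x^*)=b(x_0-x^*)=bx_0-bx^*$, precisely the length of $(bx^*,bx_0)$, whereas $(x^*,bx_0)$ would have length $bx_0-x^*=b(x_0-x^*)+1$. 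I would therefore prove the covering in the corrected form $(bx^*,bx_0)$.
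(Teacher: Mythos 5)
Your proof is correct, and there is nothing in the paper to compare it against: the lemma is stated without proof and then invoked in the proof of the subsequent theorem, so your telescoping argument fills a genuine gap rather than replicating the paper's reasoning. The length computation agrees with the definition $I_{-k}(x_0,b)=[bx_{-k},bx_{1-k})$ and the closed form $x_{-n}=b^{n}(x_0-x^*)+x^*$, and the adjacency-plus-monotonicity argument for pairwise disjointness and covering is the natural one. More importantly, your correction of the left endpoint is right. Since $x^*$ is the intersection of $y=x+1$ and $y=bx$, it satisfies $bx^*=x^*+1>x^*$, and since $\bigcup_{k=1}^{N}I_{-k}(x_0,b)=[bx_{-N},bx_0)$ with $bx_{-N}\downarrow bx^*$, the full union is $(bx^*,bx_0)$ and not the stated $(x^*,bx_0)$; your measure check, $\sum_{k\ge 1}b^{k}(1-b)(x_0-x^*)=bx_0-bx^*$, settles the point beyond doubt. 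Note that this correction actually restores the paper's internal consistency: the theorem that follows claims the unique solution lives on $(bx^*,\infty)$, which is exactly $I_0(x_0,b)\cup\bigl(\bigcup_{k\ge 1}I_k\bigr)\cup(bx^*,bx_0)$ under your corrected union, whereas the union $(x^*,bx_0)$ as printed would yield $(x^*,\infty)$ (the paper's own proof of that theorem wavers between the two, writing $(x^*,bx_0)$ and later $(x^*,\infty)$ while the theorem asserts $(bx^*,\infty)$). One minor point worth making explicit: the hypotheses $0<b<1$ and $x_0>x^*$ are not restated in the lemma but are needed, as you implicitly use them, for the intervals to be nonempty and for the monotone convergence $bx_{-N}\downarrow bx^*$.
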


\begin{theorem}
 Let $0 < b < 1 $. We have the following results:
\begin{itemize}
  \item  If $ x_0 > x^* $ then the initial value problem for the functional equation (\ref{eq:xplus1bx}), with  an initial function  $y(x)=y_0(x)$  defined on the interval  $ I_0(x_0,b):= [bx_0,  x_0 + 1) $,  has a unique solution on the interval $ (bx^*, \infty) $.

  \item If $x^* > x_0 $ then the initial value problem for the functional equation (\ref{eq:xplus1bx}), with  initial function  $y(x)=y_0(x)$  given on the interval  $ I_0(x_0,b):= [x_0 + 1,   bx_0) $  has a unique solution on the interval $ ( -\infty,   bx^*) $.

  \item Let $c < x^*, d > x^*$. Then for an initial functions,  $y(x)=f(x)$ defined on an interval $I_{0}:= [c+ 1, bc]  $, and  $y(x)=g(x)$ defined on an interval $ J_0 := [bd  , d+1)$, the initial value problem for(\ref{eq:xplus1bx}) has a unique solution on $ \mathbb{R} \setminus \{bx^*\}$.
\end{itemize}
\end{theorem}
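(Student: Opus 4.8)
The plan is to treat the first statement as the core of the argument, obtain the second by symmetry, and deduce the third by gluing the first two. For the first statement the engine is the pair of preceding lemmas together with the two equivalent rewritings of (\ref{eq:xplus1bx}): substituting $u = x+1$ gives $y(u) = y(b(u-1))$, while substituting $v = bx$ gives $y(v) = y(1 + v/b)$. Write $T(x) = b(x-1)$ and $T^{-1}(x) = 1 + x/b$; the first lemma says $T$ carries $I_k(x_0,b)$ into $I_{k-1}(x_0,b)$, and the second says $T^{-1}$ carries $I_{-k}(x_0,b)$ into $I_{1-k}(x_0,b)$. First I would record that the intervals $\{I_k(x_0,b) : k \in \mathbb{Z}\}$ are pairwise disjoint and exhaust $(bx^*,\infty)$: the rightward family tiles $[x_0+1,\infty)$, the block $I_0$ covers $[bx_0, x_0+1)$, and since $x_{-k}\to x^*$ the left endpoints $bx_{-k}$ decrease to $bx^*$, so the leftward family tiles $(bx^*, bx_0)$.

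Given this partition, every $x \in (bx^*,\infty)$ lies in a unique $I_k$, and I would define a reduction map $\pi : (bx^*,\infty) \to I_0(x_0,b)$ by applying $T$ exactly $k$ times when $x \in I_k$ with $k \ge 1$, applying $T^{-1}$ exactly $k$ times when $x \in I_{-k}$ with $k \ge 1$, and setting $\pi(x) = x$ on $I_0$. The two lemmas guarantee each step lowers (resp. raises) the index by one, so the procedure terminates in $I_0$ after exactly $|k|$ steps and $\pi$ is well defined. Existence then follows by setting $y(x) := y_0(\pi(x))$ and checking the functional equation in the form $\pi(x+1) = \pi(bx)$ for every $x > x^*$; this holds because $bx = T(x+1)$, so $x+1$ and $bx$ are a single reduction step apart and hence share a projection, which reduces to a short case check on the sign of the index of $x+1$. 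Uniqueness is immediate: any solution $\tilde y$ must satisfy $\tilde y = \tilde y \circ T$ and $\tilde y = \tilde y \circ T^{-1}$ wherever defined, so $\tilde y(x) = \tilde y(\pi(x)) = y_0(\pi(x))$, forcing $\tilde y = y$.

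The second statement is the mirror image: for $x_0 < x^*$ the forward iterates run off to $-\infty$ while the backward iterates increase to $x^*$, so the same intervals (now with the orientation $I_0 = [x_0+1, bx_0)$) tile $(-\infty, bx^*)$ and accumulate at $bx^*$ from the left; the identical reduction argument, or a reflection reducing it to the first case, yields the unique solution there. For the third statement I would apply the first statement with $x_0 = d$ to get the unique solution on $(bx^*,\infty)$ from $g$, and the second with $x_0 = c$ to get the unique solution on $(-\infty, bx^*)$ from $f$. The key observation that lets these glue without interference is that the functional equation never links the two half-lines: for $x > x^*$ both $x+1$ and $bx$ exceed $bx^*$, for $x < x^*$ both lie below $bx^*$, and at $x = x^*$ both arguments collapse to the single excluded point $bx^* = x^*+1$. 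Hence the two solutions combine into the unique solution on $\mathbb{R}\setminus\{bx^*\}$.

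I expect the main obstacle to be the existence verification, namely confirming that $y = y_0\circ\pi$ genuinely satisfies (\ref{eq:xplus1bx}) across the seams between consecutive intervals and for all index signs, rather than only the formal reduction within a fixed $I_k$. This is also where the exclusion of $bx^*$ becomes essential: the intervals $I_{-k}$ have length proportional to $b^k$ and pile up at $bx^*$, so no consistent value can be forced there by the initial data, which is precisely the PENLP at work and the reason the solution domain is $(bx^*,\infty)$ (resp. $\mathbb{R}\setminus\{bx^*\}$) rather than all of $\mathbb{R}$.
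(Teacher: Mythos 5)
Your proposal is correct and takes essentially the same route as the paper: the same tiling $\{I_k(x_0,b)\}_{k\in\mathbb{Z}}$ of $(bx^*,\infty)$ and the same two one-step lemmas, with your reduction map $\pi$ being precisely the paper's $k$-fold iteration $x \mapsto b^{\pm k}(x-bx^*)+bx^*$ landing in $I_0$, followed by $y = y_0\circ\pi$. Your explicit existence check $\pi(x+1)=\pi(bx)$ and the no-interaction-across-$x^*$ observation used to glue the third item are details the paper leaves implicit (``the other two propositions follow similar argument''), and you correctly identify $bx^*$, not the paper's misprinted $x^*$, as the accumulation point of the leftward intervals.
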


\begin{proof}
Since the initial function,  $y(x)=y_0(x)$, is defined on the interval $ I_0(x_0,b):= [bx_0,  x_0 + 1) $, we need to calculate the values of $y$ on the intervals $[x_0+1, \infty) $, and  $(x^*, bx_0)$. Let $x_R \in [x_0+1,\infty)  $. By  (\ref{eq:rightunion}), $x_R \in I_k(x_0,b)$ for some $k\in \mathbb{N} $. From (\ref{eq:xplus1bx}) we have

\begin{equation}\label{eq:onestepleftevaluation}
   y(x_R) = y((x_R-1)+1)= y(b(x_R-1)).
\end{equation}
Hence we can continue the $k$-th backward iterate as follows:
$$ x_R \rightarrow b(x_R-1)\rightarrow b( (x_R-1)-1)\rightarrow...\rightarrow b^k(x_R-bx^*) + bx^* \in I_0(x_0,b).    $$
 Consequently,
 \begin{equation}\label{eq:kstepleftevaluation}
    y(x_R)=y_0\left(b^k(x_R-bx^*) + bx^* \right).
 \end{equation}
Now let us  solve for $y$ on the interval $(x^*, bx_0)$.

Let $x_L \in (x^*, bx_0) $. By  (\ref{eq:leftunion}), $x_L \in I_{-k}(x_0,b)$ for some $k\in \mathbb{N} $. From (\ref{eq:xplus1bx}) we have

\begin{equation}\label{eq:onesteprightevaluation}
   y(x_L) = y(b\left(\frac{x_L}{b}\right))= y( 1 +\frac{x_L}{b} ).
\end{equation}
Hence we can continue until the $k$-th right iterate as follows:
$$ x_L \rightarrow  \frac{x_L}{b} +1 \rightarrow  \frac{x_L+b+b^2}{b^2}   \rightarrow...\rightarrow \frac{x_L- bx^*}{b^k} + bx^* \in I_0(x_0,b).    $$
Consequently,
\begin{equation}\label{eq:ksteprightevaluation}
  y(x_L)= y_0\left( \frac{x_L-bx^*}{b^k}  +bx^* \right),\quad x_L \in I_{-k}(x_0,b).
  \end{equation}
Overall, from (\ref{eq:rightunion}) and (\ref{eq:leftunion}), (\ref{eq:kstepleftevaluation}), (\ref{eq:ksteprightevaluation}), and the initial data $y(x)= y_0(x)$ on $I_0(x_0,b) $, if $x \in (x^*, \infty)$ we have
\begin{equation}\label{eq:fromminusinftoinf}
  y(x)= \sum_{-\infty}^{\infty}y_0\left(b^n(x-bx^*)+bx^* \right)\chi_{I_n(x_0,b)}(x),
\end{equation}
where $  \chi_{I_n(x_0,b)}(x)$ is the characteristic function of the interval $I_n(x_0,b)  $. This completes the proof the  first proposition of the theorem where $x_0 >x^*$. The other two proportions follow similar argument. For illustration of the iterations and the intervals $I_k(x_0,b) $, see Fig 2.
\end{proof}

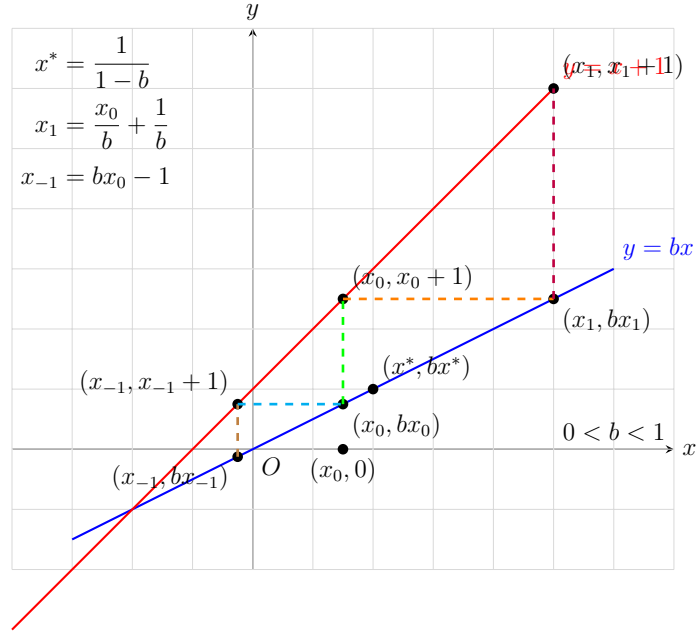
\begin{figure}[H]
\centering
\begin{tikzpicture}[scale=0.8, >=stealth, transform shape]

% Define parameter b (0 < b < 1)
\def\b{0.5} % Can be changed to any value between 0 and 1

% Define x_0 (positive x-axis point)
\def\xzero{1.5}

% Calculate intersection point x^* = 1/(1-b)
\pgfmathsetmacro{\xstar}{1/(1-\b)}
\pgfmathsetmacro{\ystar}{\b*\xstar}

% Calculate other points
\pgfmathsetmacro{\xone}{\xzero/\b + 1/\b}
\pgfmathsetmacro{\xminusone}{\b*\xzero - 1}

% Extend lines further in both directions
% For y = bx: extend from x = -3 to x = 6
% For y = x + 1: extend from x = -4 to x = 5

% Axes with extended range
\draw[->] (-4,0) -- (7,0) node[right] {$x$};
\draw[->] (0,-2) -- (0,7) node[above] {$y$};
\draw[very thin, gray!30] (-4,-2) grid (7,7);

% Lines extended in both directions
\draw[thick, blue] (-3,{-3*\b}) -- (6,{6*\b}) node[above right] {$y = bx$};
\draw[thick, red] (-4,{-3}) -- (5,6) node[above right] {$y = x + 1$};

% Intersection point
\fill (\xstar,\ystar) circle (2.5pt) node[above right] {$(x^*,bx^*)$};

% x_0 and related points
\fill (\xzero,0) circle (2.5pt) node[below] {$(x_0,0)$};
\fill (\xzero,{\b*\xzero}) circle (2.5pt) node[below right] {$(x_0,bx_0)$};
\fill (\xzero,{\xzero+1}) circle (2.5pt) node[above right] {$(x_0,x_0+1)$};

% Vertical segment from (x_0,bx_0) to (x_0,x_0+1)
\draw[dashed, green, line width=1pt] (\xzero,{\b*\xzero}) -- (\xzero,{\xzero+1});

% x_1 and related points
\fill (\xone,{\b*\xone}) circle (2.5pt) node[below right] {$(x_1,bx_1)$};
\fill (\xone,{\xone+1}) circle (2.5pt) node[above right] {$(x_1,x_1+1)$};

% Horizontal segment from (x_0,x_0+1) to (x_1,bx_1)
\draw[dashed, orange, line width=1pt] (\xzero,{\xzero+1}) -- (\xone,{\b*\xone});

% Vertical segment from (x_1,bx_1) to (x_1,x_1+1)
\draw[dashed, purple, line width=1pt] (\xone,{\b*\xone}) -- (\xone,{\xone+1});

% x_-1 and related points
\fill (\xminusone,{\b*\xminusone}) circle (2.5pt) node[below left] {$(x_{-1},bx_{-1})$};
\fill (\xminusone,{\xminusone+1}) circle (2.5pt) node[above left] {$(x_{-1},x_{-1}+1)$};

% Vertical segment from (x_{-1},bx_{-1}) to (x_{-1},x_{-1}+1)
\draw[dashed, brown, line width=1pt] (\xminusone,{\b*\xminusone}) -- (\xminusone,{\xminusone+1});

% Horizontal segment from (x_{-1},x_{-1}+1) to (x_0,bx_0)
\draw[dashed, cyan, line width=1pt] (\xminusone,{\xminusone+1}) -- (\xzero,{\b*\xzero});

% Coordinate labels
\node[anchor=north west] at (0,0) {$O$};

% Parameter label
\node[anchor=south east] at (7,0) {$0 < b < 1$};

% Add explanatory text
\node[anchor=north west] at (-4,7)
{$\begin{aligned}
x^* &= \frac{1}{1-b} \\
x_1 &= \frac{x_0}{b} + \frac{1}{b} \\
x_{-1} &= bx_0 - 1
\end{aligned}$};
\end{tikzpicture}
\caption{Geometric construction showing the relationship between the lines $y = bx$ and $y = x + 1$ for $0 < b < 1$. The intersection point is $(x^*, bx^*)$ where $x^* = \frac{1}{1-b}$. Various vertical and horizontal segments connect points on these lines, illustrating the iterative relationship between successive points.}
\label{fig:lines_construction}
\end{figure}

\subsection{The Functional Equation  $ y(x+1)=y(bx),  b > 1 $}

In the next theorem, we consider the case of $b > 1$. The scenario is similar to the case $0 < b < 1$ , with the primary difference lying in the interval on which the initial data is defined. This is a consequence of the fact that $x_0 + 1 < b x_0$ if $x_0 > x^*$, and $x_0 + 1 > b x_0$ if $x_0 < x^*$ when $b > 1$. Conversely, if $0 < b < 1$, then $x_0 + 1 > b x_0$ when $x_0 > x^*$, and $x_0 + 1 < b x_0$ when $x_0 < x^*$.

\begin{theorem}
 Let $ b > 1 $. In this case $x^* > 0 $. We have the following results:
\begin{itemize}

\item If $x_0 > x^*  $, then the initial value problem for the functional equation (\ref{eq:xplus1bx}) with  initial function  $y(x)=y_0(x)$ is a given on some interval of the form $ I_0(x_0,b):= [x_0 + 1,   bx_0) $  has a unique solution on the interval $ (x^*,   \infty) $.

  \item  If $x^* < x_0 $, then the initial value problem for the functional equation (\ref{eq:xplus1bx}) with  initial function  $y(x)=y_0(x)$ is a given on some interval of the form $ I_0(x_0,b):= [bx_0,  x_0 + 1) $  has a unique solution on the interval $ (-\infty,   x^*) $.

  \item Let $ \alpha > x^* $, and $\beta < x^*$. Let $y$ is defined by the initial functions $y =f_1(x)$ and $y(x)= f_2(x)$, where
  $$f_1:[\beta + 1, b\beta )\to \mathbb{R},\quad  f_2: [\alpha + 1 , b\alpha )\to \mathbb{R }. $$
  Then the initial value problem for (\ref{eq:xplus1bx}) has a unique solution on $ \mathbb{R} \setminus \{bx^*\}$.
\end{itemize}
\end{theorem}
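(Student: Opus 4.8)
The plan is to mirror, with the roles of the forward and backward iterates interchanged, the construction already carried out for $0 < b < 1$. First I would record the two $y$-preserving transformations read off from (\ref{eq:xplus1bx}): substituting $x \mapsto x-1$ gives $y(x) = y(b(x-1))$, so the map $T(x) := b(x-1)$ and its inverse $T^{-1}(x) = \frac{x}{b}+1$ both leave $y$ invariant. Both share the fixed point $bx^* = x^*+1$ (using the identity $x^*+1 = \frac{1}{b-1}+1 = \frac{b}{b-1} = bx^*$), and since $b>1$ one checks $T(x)-bx^* = b(x-bx^*)$, so $T$ expands away from $bx^*$ while $T^{-1}$ contracts toward it. Consequently, for $b>1$ the forward sequence $x_n = b^{-n}(x_0-x^*)+x^*$ now converges monotonically to $x^*$ while the backward sequence $x_{-n} = b^n(x_0-x^*)+x^*$ diverges to $+\infty$ — exactly the opposite of the $0<b<1$ situation — which is why the admissible initial interval for $x_0 > x^*$ changes from $[bx_0,x_0+1)$ to $[x_0+1,bx_0)$.

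Next I would set up the analogues of the two union lemmas. Re-indexing and re-orienting the intervals $I_k$ for $b>1$, I would show (by the same telescoping estimates used for $l(I_k)$ and the union identities) that, starting from $I_0 = [x_0+1, bx_0)$ with $x_0 > x^*$, the images $T^{j}(I_0)$ form a pairwise-disjoint family whose union is exactly $(bx^*, \infty)$: the expanding iterates exhaust $[bx_0,\infty)$ while the contracting iterates fill the gap $(bx^*, x_0+1)$, accumulating at $bx^*$ but never reaching it. Given any $x_R \in (bx^*,\infty)$ I would then locate the unique $j$ with $x_R \in T^{j}(I_0)$ and iterate the functional equation $j$ times back into $I_0$, so that $y(x_R) = y_0(T^{-j}(x_R))$ is simultaneously forced (uniqueness) and well defined (existence). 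This proves the first bullet; the second bullet is the reflected statement for $x_0 < x^*$, where $I_0 = [bx_0, x_0+1)$ and the identical argument tiles $(-\infty, bx^*)$.

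For the third bullet I would simply glue the first two. Since $\alpha > x^*$ yields, via $f_2$ on $[\alpha+1,b\alpha)$, a solution on $(bx^*,\infty)$, while $\beta < x^*$ yields, via $f_1$ on the correctly oriented interval $[b\beta, \beta+1)$, a solution on $(-\infty, bx^*)$, and since these two half-lines are disjoint, the two extensions never interact and combine into a single solution on $(-\infty,bx^*)\cup(bx^*,\infty) = \mathbb{R}\setminus\{bx^*\}$. Finally I would observe that $bx^*$ is genuinely undetermined: it is the common fixed point of $T$ and $T^{-1}$, so evaluating (\ref{eq:xplus1bx}) at $x = x^*$ gives $y(bx^*) = y(bx^*)$, imposing no constraint, and no orbit started elsewhere ever lands on it (the transformations preserve the sign of $x-bx^*$). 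This is precisely the limit point excluded by the PENLP.

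I expect the main obstacle to be bookkeeping rather than conceptual: getting the orientation and indexing of the $I_k$ right for $b>1$ (their endpoints swap relative to the $0<b<1$ case, since the printed initial intervals with $\beta<x^*$ are written in reversed order), and verifying cleanly that the contracting iterates accumulate at $bx^*$ from the correct side without ever covering it, so that the excluded set is the single point $bx^*$ rather than a larger neighborhood. Everything else is a faithful transcription of the already-established $0<b<1$ argument.
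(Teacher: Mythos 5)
Your proposal is correct and follows essentially the paper's own route: the paper gives no separate proof for $b>1$, deferring to the remark that the scenario mirrors the $0<b<1$ case, and your argument is precisely that transcription, with the expanding map $T(x)=b(x-1)$ and the contracting map $T^{-1}(x)=x/b+1$ (common fixed point $bx^*=x^*+1$) exchanging the roles that the forward and backward iterations played in the earlier proof, and with the images $T^{j}(I_0)$ being exactly the paper's intervals $I_k(x_0,b)$ in disguise.

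One substantive remark: what your tiling argument actually establishes is a unique solution on $(bx^*,\infty)$ in the first bullet and on $(-\infty,bx^*)$ in the second, not on the printed $(x^*,\infty)$ and $(-\infty,x^*)$. Your version is the correct one: since $bx^*=x^*+1>x^*$ and $T^{\pm 1}$ preserve the sign of $x-bx^*$, the orbit of any point of $(x^*,bx^*]$ stays in $(-\infty,bx^*]$ and never meets $I_0\subset(bx^*,\infty)$, so $y$ is genuinely undetermined there; the printed domains are slips in the statement, and your reading is the only one consistent with the third bullet's $\mathbb{R}\setminus\{bx^*\}$ and with the paper's summary table. Your two other silent corrections --- reading the second bullet's hypothesis as $x_0<x^*$ and re-orienting $f_1$'s domain to $[b\beta,\beta+1)$, which as printed is empty when $\beta<x^*$ --- are likewise right.
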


We summarize the solution of the initial value problem for the various cases of initial  intervals in the following table.

\begin{table}[H]
\centering
\begin{tabular}{|l|l|l|}
\hline
\multicolumn{3}{|c|}{$0 < b < 1$} \\ \hline
\multicolumn{1}{|c|}{$x_0 > x^*$} & $I_0 = [bx_0, x_0+1)$ & $I_{\text{max}} = (bx^*, \infty)$ \\ \hline
\multicolumn{1}{|c|}{$x_0 < x^*$} & $I_0 = (x_0 + 1, b x_0]$ & $I_{\text{max}} = (-\infty, bx^*) $ \\ \hline
\multicolumn{3}{|c|}{$b = 1$} \\ \hline
\multicolumn{1}{|c|}{Any $x_0$} & $I_0 = [x_0, x_0+1 )$ & $I_{\text{max}} = \mathbb{R}$ \\ \hline
\multicolumn{3}{|c|}{$b > 1$} \\ \hline
\multicolumn{1}{|c|}{$x_0 > x^*$} & $I_0 = [x_0+1, bx_0)$ & $I_{\text{max}} = [bx^*, \infty)$ \\ \hline
\multicolumn{1}{|c|}{$x_0 < x^*$} & $I_0 = (b x_0, x_0 + 1]$ & $I_{\text{max}} = (-\infty, bx^*)$ \\ \hline
\end{tabular}
\caption{The maximal  set $I_{\text{max}}  $  of existence of solution for the IVP for $y(x+1)=y(bx), b >0 $, with a initial data $ y(x)=y_0(x), x \in I_0 $ .}
\end{table}

%\subsection{The functional equation  $ y(x+1)=y(bx),  b < 0 $}
In the next subsections we treat the case of $b<0$ for $-1 < b< 0$, $b< -1$ and $b=-1$ separately. Unlike the case of $b>0$ the iterates in the case  $b<0$ oscillate in sign and in magnitude. Therefore a proper consideration of such behave is required.

\subsection{The functional equation  $ y(x+1)=y(bx), -1 < b < 0 $ }
Let $ b < 0 $, let us define the counterclockwise and clockwise iterations as follows:

  \begin{definition}
     The \emph{clockwise iteration}  is the recurrence relation
    \begin{equation}\label{eq:clockwiseiteration}
      x_{n+1} = b x_n - b, \text{ where }   x_0 \neq bx^* \text{ is given}.
    \end{equation}
  \end{definition}

\begin{definition}
 The \emph{counterclockwise iteration} is  the recurrence relation defined as
    \begin{equation}\label{eq:counterclockwiseiteration}
      x_{n+1} = \frac{x_n}{b} + 1,  \text{ where }  x_0 \neq bx^* \text{ is given}.
    \end{equation}
\end{definition}
Consider the following arrangement of the functional equation (\ref{eq:xplus1bx})
  \begin{equation}\label{eq:spiralinward}
    y(x) = y((x-1)+1) =y(b(x-1)).
  \end{equation}
  From (\ref{eq:spiralinward}) we construct the following recurrence relation.
  \begin{equation}\label{eq:spiralinrecurrence}
      x_{n+1}= bx_{n}- b,\quad  x_0= x
  \end{equation}
  The solution for the recurrence relation (\ref{eq:spiralinrecurrence}) is given by
  \begin{equation}\label{eq:solntospiralinrecurrence}
    x_n = b^n (x-bx^*)+ bx^*.
  \end{equation}
To emphasize that the initial term of the recurrence relation (\ref{eq:spiralinrecurrence}) is $x$, we may write $x_n$  as $x_n(x)$. So, $x =x_0(x)$.

\begin{equation}\label{eq:bxstarislimitpoint}
  x_n(x) \rightarrow bx^*, \text{ as  } n  \rightarrow \infty,   \forall x \in \mathbb{R}.
\end{equation}
From (\ref{eq:spiralinward}) and (\ref{eq:spiralinrecurrence}), for each $x \in \mathbb{R} $  we have
  \begin{equation}\label{eq:levelsequenceinward}
    y(x_n(x)) =y(x),\quad  n \in \mathbb{N} .
  \end{equation}
 From (\ref{eq:solntospiralinrecurrence}) we have,
 \begin{equation}\label{eq:shrinkingRadius}
  |x_{n+1}-bx^*| =|b| |x_{n}-bx^*| .
 \end{equation}
 Equation shows that each iterate $x_{n+1} $ gets closer to the limit point $bx^*$ with a factor $0 < |b|< 1 $ than its proceeding iterate $x_n$.

\begin{lemma}\label{eq:L1}
  Let $ -1 < b < 0 $. If $x_n >  bx^* $, then  $x_{n+1} <  bx^* $.
\end{lemma}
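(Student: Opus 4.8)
The plan is to recast the clockwise recurrence (\ref{eq:spiralinrecurrence}) in terms of the deviation of each iterate from the point $bx^*$, which is the unique fixed point of the map $x \mapsto bx - b$. First I would verify this fixed-point claim: solving $p = bp - b$ gives $p = \frac{-b}{1-b} = \frac{b}{b-1} = bx^*$, using $x^* = \frac{1}{b-1}$. Equivalently, one checks the simplification $bx^* - 1 = x^*$, so that $b(bx^* - 1) = bx^*$, i.e. $bx^* = b\,(bx^*) - b$.

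With the fixed point identified, the key step is a one-line linearization. Subtracting the fixed-point identity $bx^* = b\,(bx^*) - b$ from the recurrence $x_{n+1} = bx_n - b$ yields
\begin{equation}
x_{n+1} - bx^* = b\,(x_n - bx^*).
\end{equation}
The same identity is immediate from the closed form (\ref{eq:solntospiralinrecurrence}), since $x_n - bx^* = b^n(x - bx^*)$ gives $x_{n+1} - bx^* = b^{n+1}(x - bx^*) = b\,(x_n - bx^*)$. This exhibits the deviation from $bx^*$ as being scaled by $b$ at each step.

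The conclusion then follows purely from the sign of $b$. Assume $x_n > bx^*$, so that $x_n - bx^* > 0$. Since $-1 < b < 0$ we have $b < 0$, and therefore $x_{n+1} - bx^* = b\,(x_n - bx^*) < 0$, i.e. $x_{n+1} < bx^*$, as claimed.

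The main obstacle here is negligible: the argument is routine once the fixed point $bx^*$ is recognized, and the sign reversal is automatic from $b < 0$. The only point demanding genuine care is confirming that $bx^*$ really is the fixed point of the recurrence (equivalently, the algebraic identity $bx^* - 1 = x^*$), since an error there would propagate through every subsequent estimate. I would note in passing that the same relation shows the iterates strictly alternate sides of $bx^*$ as $n$ increments, while by (\ref{eq:shrinkingRadius}) their distance to $bx^*$ contracts by the factor $\abs{b} < 1$; this oscillatory-but-contracting behavior is exactly the spiralling motion flagged in the text for the case $b < 0$.
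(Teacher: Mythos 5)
Your proposal is correct and is essentially the paper's own argument: the paper works directly from the closed form $x_n = b^n(x - bx^*) + bx^*$, noting that $x_n > bx^*$ forces $b^n(x-bx^*) > 0$ and multiplying by $b<0$ flips the sign, which is exactly your one-step relation $x_{n+1} - bx^* = b\,(x_n - bx^*)$ in disguise. Your explicit identification of $bx^*$ as the fixed point of $x \mapsto bx - b$ is a slightly cleaner packaging, but the mathematical content is identical.
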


\begin{proof}
Let $x_n >  bx^* $.
  \begin{align*}
     & b^n(x-bx^*)+bx^* > bx^* \\
     & b^n(x-bx^*) > 0  \\
     & b^{n+1}(x-bx^*) < 0  \\
     & b^{n+1}(x-bx^*)+ bx^* < bx^* \\
     & x_{n+1}(x) <  bx^*
  \end{align*}
 The lemma is proved. Similarly, we have the next lemma, which is analogous to this one.
\end{proof}

\begin{lemma}\label{eq:L2}
  Let $ -1 < b < 0 $. If $x_n <  bx^* $, then  $x_{n+1} > bx^* $.
\end{lemma}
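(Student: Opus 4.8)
The plan is to mirror the proof of Lemma \ref{eq:L1} essentially verbatim, reversing every inequality, since the two statements are exact sign-analogues. The only tool needed is the closed-form solution \eqref{eq:solntospiralinrecurrence}, namely $x_n(x) = b^n(x - bx^*) + bx^*$, together with the elementary fact that multiplication by the negative number $b$ reverses the direction of a strict inequality.

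Concretely, I would start from the hypothesis $x_n < bx^*$ and unwind it through the closed form. Writing $x_n = b^n(x - bx^*) + bx^*$, the hypothesis is equivalent to $b^n(x - bx^*) < 0$. Multiplying both sides by $b$ and invoking $b < 0$ flips the inequality to $b^{n+1}(x - bx^*) > 0$; adding $bx^*$ then yields $b^{n+1}(x - bx^*) + bx^* > bx^*$, which is precisely $x_{n+1}(x) > bx^*$, as desired.

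Alternatively, and perhaps more transparently, I would exploit the one-step contraction identity already recorded in \eqref{eq:shrinkingRadius}. Since $bx^*$ is the fixed point of the clockwise recurrence $x_{n+1} = bx_n - b$ (solving $x = bx - b$ gives $x = b/(b-1) = bx^*$), subtracting it yields the clean relation $x_{n+1} - bx^* = b(x_n - bx^*)$. The hypothesis $x_n < bx^*$ makes the factor $x_n - bx^*$ negative, and multiplying by the negative number $b$ makes the product positive, so $x_{n+1} - bx^* > 0$. This phrasing makes clear that the alternation of the iterates across the limit point $bx^*$ is nothing more than the sign of $b$ propagating through the contraction.

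There is no genuine obstacle here; the only point demanding care is the bookkeeping of the inequality direction when multiplying by $b < 0$, which is exactly where a sign slip could occur. I would also note for the record that combining this lemma with Lemma \ref{eq:L1} shows that the iterates $x_n(x)$ strictly alternate about $bx^*$ while their distance to $bx^*$ shrinks by the factor $|b| < 1$ at each step, in agreement with the convergence \eqref{eq:bxstarislimitpoint}.
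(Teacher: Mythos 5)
Your proposal is correct and takes essentially the same route as the paper: the paper gives no separate proof of this lemma, stating at the end of Lemma \ref{eq:L1}'s proof that the result follows analogously, and your first argument is precisely that analogue—the same chain of inequalities from the closed form \eqref{eq:solntospiralinrecurrence} with each inequality flipped upon multiplication by $b<0$. Your alternative phrasing via the one-step identity $x_{n+1}-bx^{*}=b\,(x_{n}-bx^{*})$ is just a clean repackaging of the same computation, not a genuinely different argument.
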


\begin{figure}[H]
\centering
\begin{tikzpicture}[scale=1.4, >=stealth]

% Define parameters
\def\b{-0.5} % b = -1/2
\def\xzero{1.5} % initial x value closer to origin for rapid convergence

% Draw coordinate axes
\draw[->, thin] (-2.5,0) -- (3,0) node[right] {$x$};
\draw[->, thin] (0,-1.5) -- (0,3) node[above] {$y$};

% Draw the lines
\draw[thick, blue] (-2.5,-1.5) -- (2.5,3.5) node[above right] {$y = x + 1$};
\draw[thick, red] (-2.5,1.25) -- (3,{-1.5}) node[below right] {$y = bx, \, -1 < b <0 $};

% Calculate intersection point
\pgfmathsetmacro{\intersectx}{-2/3}
\pgfmathsetmacro{\intersecty}{\intersectx + 1}
\coordinate (I) at (\intersectx, \intersecty);

% Draw intersection point
\fill[green] (I) circle (2pt);

% Calculate cobwebbing points
\coordinate (P0) at (\xzero, 0);
\coordinate (P1) at (\xzero, {\b*\xzero});
\coordinate (P2) at ({\b*\xzero - 1}, {\b*\xzero});
\coordinate (P3) at ({\b*\xzero - 1}, {\b*(\b*\xzero - 1)});
\coordinate (P4) at ({\b*(\b*\xzero - 1) - 1}, {\b*(\b*\xzero - 1)});
\coordinate (P5) at ({\b*(\b*\xzero - 1) - 1}, {\b*(\b*(\b*\xzero - 1) - 1)});
\coordinate (P6) at ({\b*(\b*(\b*\xzero - 1) - 1) - 1}, {\b*(\b*(\b*\xzero - 1) - 1)});
\coordinate (P7) at ({\b*(\b*(\b*\xzero - 1) - 1) - 1}, {\b*(\b*(\b*(\b*\xzero - 1) - 1) - 1)});
\coordinate (P8) at ({\b*(\b*(\b*(\b*\xzero - 1) - 1) - 1) - 1}, {\b*(\b*(\b*(\b*\xzero - 1) - 1) - 1)});

% Draw the cobwebbing path
\draw[thick, magenta, ->] (P0) -- (P1);
\draw[thick, magenta, ->] (P1) -- (P2);
\draw[thick, magenta, ->] (P2) -- (P3);
\draw[thick, magenta, ->] (P3) -- (P4);
\draw[thick, magenta, ->] (P4) -- (P5);
\draw[thick, magenta, ->] (P5) -- (P6);
\draw[thick, magenta, ->] (P6) -- (P7);
\draw[thick, magenta, ->] (P7) -- (P8);

% Mark points (minimal)
\foreach \point in {P0,P1,P2,P3,P4,P5,P6,P7,P8} {
    \fill[black] (\point) circle (1.5pt);
}

\end{tikzpicture}

\caption{Cobwebbing diagram showing the iterative convergence process between the lines $ y = x + 1 $ and $ y = bx $, \, $ -1 < b < 0 $. The clockwise iteration is asymptotically convergent to the limit point $ x = bx^x $. Note that for $ -1 < b < 0 $, the clockwise iteration is asymptotically convergent, while the counterclockwise iteration diverges; the opposite case occurs when $ b < -1 $. For $b=-1$ the iteration is cyclic}
\label{fig:cobweb}

%\%caption{Cobwebbing diagram showing the iterative convergence process between the lines $y = x + 1$ and $y = bx, \, -1< b< 0 $.  The clockwise iteration is asymptotically convergent to the limit point $x=bx^x$ . Note that for $-1<b<1$ the clockwise iteration is asymptotically convergent, while the counterclockwise iteration is divergent, the case in reverse manner for case of $b<-1$}
%\label{fig:cobweb}
\end{figure}
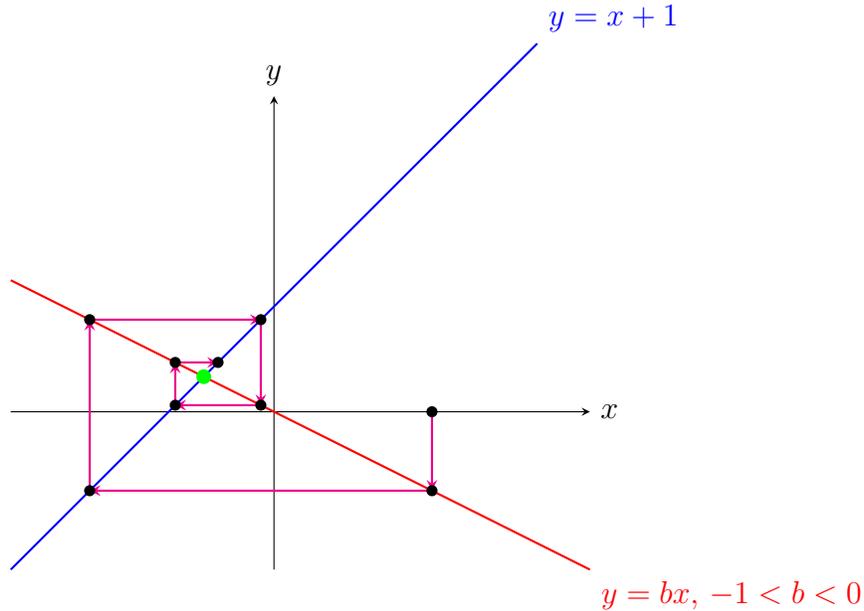

Based On  Lemmas (\ref{eq:L1}) and (\ref{eq:L2}), as well as  equation (\ref{eq:shrinkingRadius}), we observe that the iterates $x_n(x)$ oscillate around the limit point $bx^*$  getting closer and ultimately converging to it.
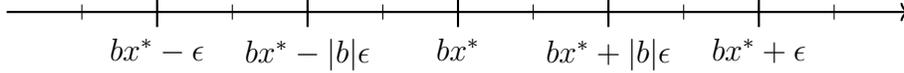
\begin{figure}[h]
\centering
\begin{tikzpicture}
% Number line (removed left arrowhead)
\draw[->, thick] (-6,0) -- (6,0);

% Tick marks and labels
% Center point
\draw[thick] (0,0.2) -- (0,-0.2) node[below] {$bx^*$};

% Right side points
\draw[thick] (2,0.2) -- (2,-0.2) node[below] {$bx^*+ |b|\epsilon$};
\draw[thick] (4,0.2) -- (4,-0.2) node[below] {$bx^*+\epsilon $};

% Left side points
\draw[thick] (-2,0.2) -- (-2,-0.2) node[below] {$bx^*-|b|\epsilon$};
\draw[thick] (-4,0.2) -- (-4,-0.2) node[below] {$bx^*-\epsilon$};

% Scale indicators (optional)
\foreach \x in {-5,-4,-3,-2,-1,0,1,2,3,4,5}
    \draw (\x,0.1) -- (\x,-0.1);
\end{tikzpicture}
\caption{Number line showing points around $bx^*$ with $\varepsilon$ spacing}
\label{fig:number_line}
\end{figure}

\begin{theorem}
 Let   $ -1 < b< 0  $  and $\epsilon > 0 $ be any positive number. Consider the initial value problem the functional equation (\ref{eq:xplus1bx}) with an initial function $y_0$ defined on the union of two intervals of the of the form
  \begin{equation}\label{eq:Intervalformforbminus1tozero}
   I_0(b, \epsilon) = (bx^*-\epsilon, bx^*-|b|\epsilon ] \cup [bx^*+ |b|\epsilon, bx^* + \epsilon ),
  \end{equation}
 excluding the neighbourhood of the limit point $bx^*$. Then there exists a unique solution to the initial value problem for the functional equation (\ref{eq:xplus1bx}) which is the unique extension of the initial function $y_0$ to $\mathbb{R} \setminus \{bx^*\}$.
\end{theorem}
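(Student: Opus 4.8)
The plan is to read the functional equation as the assertion that $y$ is constant along the orbits of the affine map $Tx := bx - b$, and then to prove that the set $I_0(b,\epsilon)$ is a \emph{transversal}: it meets every orbit other than the fixed point $bx^*$ in exactly one point. A function prescribed on such a transversal then extends in one and only one way. Concretely, I would first rewrite (\ref{eq:xplus1bx}) in the form (\ref{eq:spiralinward}), $y(x) = y(bx-b) = y(x_1(x))$, and observe that replacing $x$ by $x+1$ recovers (\ref{eq:xplus1bx}); hence a function solves the equation precisely when $y(x) = y(Tx)$ wherever both sides are defined. Since $T$ is an affine bijection of $\mathbb{R}$ fixing $bx^*$, each $x \neq bx^*$ has a bi-infinite orbit $\{x_n(x) : n \in \mathbb{Z}\}$ given by (\ref{eq:solntospiralinrecurrence}), on which, by (\ref{eq:levelsequenceinward}), every solution is constant. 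Writing $r_n := |x_n(x) - bx^*|$, relation (\ref{eq:shrinkingRadius}) yields $r_n = |b|^n r_0$ with $r_0 = |x - bx^*| > 0$: a strictly decreasing bi-infinite geometric progression of ratio $|b| \in (0,1)$, with $r_n \to 0$ as $n \to +\infty$ (the clockwise iterates, cf. (\ref{eq:bxstarislimitpoint})) and $r_n \to \infty$ as $n \to -\infty$ (the counterclockwise iterates).

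The crux is the transversality claim. From (\ref{eq:Intervalformforbminus1tozero}), membership in $I_0(b,\epsilon)$ depends only on the distance to $bx^*$: one has $z \in I_0(b,\epsilon)$ iff $|b|\epsilon \le |z - bx^*| < \epsilon$, the two subintervals corresponding to $z > bx^*$ and $z < bx^*$. Thus $x_n(x) \in I_0(b,\epsilon)$ iff $|b|\epsilon \le |b|^n r_0 < \epsilon$. Taking logarithms and using $\log|b| < 0$ rewrites this as $\frac{\log(\epsilon/r_0)}{\log|b|} < n \le \frac{\log(\epsilon/r_0)}{\log|b|} + 1$, a half-open window of length one, which contains exactly one integer $n^*=n^*(x)$. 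Hence the orbit lands in $I_0(b,\epsilon)$ for this $n^*$ and for no other index. Here both subintervals of (\ref{eq:Intervalformforbminus1tozero}) are genuinely needed: by Lemmas \ref{eq:L1} and \ref{eq:L2} the iterates alternate sides of $bx^*$, so the sign of $x_{n^*}(x) - bx^*$ is not known a priori, and the transversal must be symmetric about $bx^*$ in order to catch every orbit at its unique radius-matching step.

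With the transversal property established I would define the extension by $y(x) := y_0\big(x_{n^*(x)}(x)\big)$ for $x \neq bx^*$. This is well defined by the uniqueness of $n^*$, and on $I_0(b,\epsilon)$ itself $n^* = 0$, so $y$ restricts to $y_0$ there. Because $x$ and $Tx = x_1(x)$ share the same orbit with indices shifted by one, they select the same point of $I_0(b,\epsilon)$; consequently $y(Tx) = y(x)$, i.e. $y$ solves (\ref{eq:xplus1bx}) on $\mathbb{R}\setminus\{bx^*\}$. Uniqueness is then immediate: any solution is constant on orbits by (\ref{eq:levelsequenceinward}) and so is forced to equal $y_0$ at the unique orbit representative in $I_0(b,\epsilon)$. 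Finally, the fixed point $bx^*$ has the degenerate orbit $\{bx^*\}$, which never meets $I_0(b,\epsilon)$; this is exactly the PENLP, and it explains both why $bx^*$ is excluded from the domain of the solution and why no initial datum may be prescribed there.

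I expect the main obstacle to be making the exactly-once intersection fully rigorous, in particular tracking the bi-infinite index range: points inside the excluded neighborhood $(bx^* - |b|\epsilon,\, bx^* + |b|\epsilon)$ reach $I_0(b,\epsilon)$ only through negative indices, i.e. via the counterclockwise iterates that move \emph{away} from $bx^*$, so the argument genuinely relies on $T$ being invertible and on the radii filling a complete two-sided geometric progression. The half-open nature of (\ref{eq:Intervalformforbminus1tozero}) — closed at $|b|\epsilon$, open at $\epsilon$ — must be matched exactly to the window $(a, a+1]$ in the integer count, since a boundary mismatch would either double-count or miss the orbits whose radii hit one of the values $|b|^k\epsilon$ precisely.
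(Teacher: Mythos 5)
Your proposal is correct, and it reaches the conclusion by a genuinely different organization of the argument than the paper's, even though both rest on the same underlying dynamics (the affine map $T x = bx - b$, its fixed point $bx^*$, and the geometric progression of distances $|x_n - bx^*| = |b|^n\,|x - bx^*|$). The paper proceeds by a two-case, first-passage construction: for $x$ outside $B(bx^*,\epsilon)$ it takes the smallest $p$ with $x_p(x)$ inside the inner ball and argues that the previous iterate $x_{p-1}(x)$ lies in $I_0(b,\epsilon)$; for $x$ inside $B(bx^*,|b|\epsilon)\setminus\{bx^*\}$ it runs the outward iteration, takes the smallest exit index $q$, and again steps back one; it then assembles the three-case formula (\ref{eq:summarizedsolution}). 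What the paper leaves largely implicit — that the selected index is the \emph{only} one whose iterate lies in $I_0$, that the resulting $y$ actually satisfies the functional equation at every point, and that no other solution exists — is exactly what your transversal lemma delivers explicitly: since membership in $I_0(b,\epsilon)$ depends only on the radius $|z - bx^*|$, and the radii along a bi-infinite orbit form a two-sided geometric progression of ratio $|b|$, the log-window count $\frac{\log(\epsilon/r_0)}{\log|b|} < n \le \frac{\log(\epsilon/r_0)}{\log|b|} + 1$ shows each orbit other than $\{bx^*\}$ meets $I_0$ exactly once. This unifies the paper's two cases (your $n^* \ge 1$ corresponds to the paper's $p-1$, your $n^* \le -1$ to the paper's $-(q-1)$), makes the well-definedness of $y(x) = y_0(x_{n^*(x)}(x))$ and the invariance $y(Tx) = y(x)$ one-line verifications, gives uniqueness as an immediate corollary of constancy on orbits, and exposes precisely why the half-open endpoint pattern of (\ref{eq:Intervalformforbminus1tozero}) is the right one — a point the paper's proof never isolates. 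The trade-off is that the paper's first-passage argument is more elementary (no logarithms) and tracks the oscillation of the iterates concretely via Lemmas \ref{eq:L1} and \ref{eq:L2}, whereas yours is shorter, complete on uniqueness, and generalizes immediately to any initial set characterized by a radius condition $|b|\epsilon \le |z - l| < \epsilon$ about a limit point $l$.
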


\begin{proof}
 Since the initial function is defined on $(bx^*-\epsilon, bx^*-|b|\epsilon ] \cup [bx^*+ |b|\epsilon, bx^* + \epsilon ) $, it remains  to extend the initial function to the sets $ ( bx^* -|b|\epsilon , bx^*+|b|\epsilon  )\setminus    \{bx^*\}   $ and the complement $B(bx^*, \epsilon)^ \complement $. Suppose that $ x \in  B(bx^*, \epsilon)^ \complement    $.  Let $p$ be the smallest positive integer such that
 \begin{equation}\label{eq:thesmallestpostiveintgerinward}
   x_p(x) \in ( bx^*-|b|\epsilon,\,  bx^* + |b|\epsilon   )
 \end{equation}
 Equation (\ref{eq:thesmallestpostiveintgerinward}) implies that $-|b|\epsilon <  b^n(x-bx^*) < |b|\epsilon $.
 \begin{align*}
   \Rightarrow & \epsilon > b^{p-1}(x-bx^*) > - \epsilon  \\
   \Rightarrow & bx^* -  \epsilon < bx^* +  b^{p-1}(x-bx^*) < bx^* - \epsilon \\
   \Rightarrow  &  bx^* -  \epsilon < x_{p-1}  <  bx^* + \epsilon
  \end{align*}
  However $ x_{p-1} \notin        ( bx^*-|b|\epsilon,  bx^* + |b|\epsilon   ) $, by the assumption that $p$ is the smallest positive integer that $ x_p(x) \in ( bx^*-|b|\epsilon,  bx^* + |b|\epsilon   )$. Therefore $x_{p-1} (x)\in I_0(b,\epsilon) $.
  $$ y(x)= y_0(x_{p-1} (x)). $$
  Now consider the case $x \in      ( bx^*-|b|\epsilon,  bx^* + |b|\epsilon   ) \setminus \{bx^*\}$. Consider the rearrangement
  \begin{equation}\label{eq:spiraloutward}
    y(x)=y(b(x/b))= y((x/b)+1)
  \end{equation}
Equation (\ref{eq:spiraloutward}) induces the recurrence relation
  \begin{equation}\label{eq:spiraloutrecurrence}
    x_n(x)= x_{n-1}(x)/b + 1
  \end{equation}
  The solution for the recurrence relation (\ref{eq:spiraloutrecurrence}) is given by
  \begin{equation}\label{eq:solnforspiralout}
    x_n(x)=b^{-n}(x-bx^*)+bx^*
  \end{equation}
From (\ref{eq:spiraloutward}) and (\ref{eq:spiraloutrecurrence}), for each $x \in \mathbb{R} $  we have
  \begin{equation}\label{eq:levelsequenceoutward}
    y(x_n(x)) =y(x),\quad  n \in \mathbb{N} .
  \end{equation}
 From (\ref{eq:solnforspiralout}) we have,
 \begin{equation}\label{eq:expandingRadius}
  |x_{n+1}-bx^*| =|b|^{-1} |x_{n}-bx^*| .
 \end{equation}
  So each iterate $x_{n+1}(x)$ moves further from the limit point $bx^*$ with a factor $|b|^{-1}$ compared to its preceding element $x_{n}(x)$. Let $q$ be the smallest positive integer such that

  \begin{equation}\label{eq:thesmallestpostiveintgeroutward}
   x_n(x) \in B(bx^*,\epsilon)^\complement
 \end{equation}
  We claim that $x_{q-1}(x) \in I_0(b,\epsilon)$. Without loss of generality, let $x_q(x) > bx^* + \epsilon $.( Otherwise, it may happen that $x_q(x) < bx^* - \epsilon $). Then
  \begin{align*}
       & b^{-q}(x-bx^*)+bx^* > bx^*+\epsilon \\
     & \Rightarrow   b^{-q}(x-bx^*) > \epsilon \\
     & \Rightarrow     b^{1-q}(x-bx^*) <  -|b|\epsilon \\
     & \Rightarrow     bx^* + b^{1-q}(x-bx^*) < bx^* -|b|\epsilon            \\
     & \Rightarrow     x_{q-1}(x) < bx^* -|b|\epsilon .
  \end{align*}
  In addition, we must have  $ bx^* -\epsilon <  x_{q-1}(x)  $. Otherwise, $ x_{q-1}(x) \in B(bx^*,\epsilon)^\complement    $. This contradicts the assumption that $q$ is the smallest index such that $ x_{q}(x) \notin B(bx^*,\epsilon) $. Hence $x_{q-1} \in I_0(b,\epsilon) $. As a result,
  $$y(x)= y(x_{q-1}(x)) . $$
  Summarizing our results we have:
  \begin{equation}\label{eq:summarizedsolution}
   y(x)= \begin{cases}
       & y_0(x),  \mbox{ if } x \in I_0(b,\epsilon),  \\
       & y_0(x_{p-1}(x)),  \mbox{if }  x \in B(bx^*,\epsilon )^\complement, \\
       &  y_0(x_{q-1}(x)).    \mbox{ if } x \in B(bx^*,|b|\epsilon)\setminus \{bx^*\}.
    \end{cases}
  \end{equation}
 This complete the proof of the Theorem.
\end{proof}

\subsection{The functional equation $ y(x+1) = y(-x)$}

Consider the functional equation
\begin{equation}\label{eq:xplsoneminusx}
   y(x+1) = y(-x).
\end{equation}
Replacing $x$ by $x- \frac{1}{2} $, the equation (\ref{eq:xplsoneminusx}) can be rearranged as:
\begin{equation}\label{eq:plushalfminushalf}
  y \left(\frac{1}{2}-x \right)=y \left( \frac{1}{2} + x \right).
\end{equation}
Replacing $x$ by $-x$, the equation (\ref{eq:xplsoneminusx}) can be rearranged as:
\begin{equation}\label{eq:xoneminusx}
 y(x) = y(1-x),
\end{equation}
In this case, both the clockwise and the counterclockwise iterates are given by:
\begin{equation}\label{eq:ccwbequalsminus1}
  x_{n+1}=1-x_n
\end{equation}
The solution of  the recurrence equation (\ref{eq:ccwbequalsminus1}) with initial point $x$  is given by
\begin{equation}\label{eq:solntoccwbequalsminus1}
  x_n=(-1)^n\left(x-\frac{1}{2}\right) + \frac{1}{2}
\end{equation}
This is an oscillating sequence
$$ x, 1-x, x, 1-x,...   $$

\begin{theorem}
  The general solution of the functional equation $y(x+1)=y(-x)$ is given by  all function $y$ such that
  $$  y(x)= f(x-1/2), $$
 where $f$ is any even function.
\end{theorem}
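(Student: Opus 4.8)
The plan is to reduce the functional equation to a pure symmetry statement and then absorb the location of the symmetry axis into a horizontal shift. The equation (\ref{eq:xplsoneminusx}), $y(x+1)=y(-x)$, has already been rewritten in (\ref{eq:plushalfminushalf}) as $y(1/2-x)=y(1/2+x)$, which says exactly that $y$ is symmetric about the vertical line $x=1/2$. This reformulation carries the whole content of the theorem; what remains is to translate ``symmetric about $1/2$'' into ``even after a shift,'' and to verify that the correspondence runs in both directions.

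First I would introduce the shifted function $f(t):=y(t+1/2)$, equivalently $y(x)=f(x-1/2)$, so that passing between $y$ and $f$ is merely a relabeling of the domain and imposes no regularity assumption on either. Substituting $x=1/2+t$ (so that the point $1/2-x$ becomes $-t+1/2$ and $1/2+x$ becomes $t+1/2$) into the symmetry relation (\ref{eq:plushalfminushalf}) gives $y(1/2-t)=y(1/2+t)$, i.e. $f(-t)=f(t)$ for all $t$. Hence every solution $y$ of (\ref{eq:xplsoneminusx}) produces, via this shift, an even function $f$ with $y(x)=f(x-1/2)$.

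Conversely, I would take an arbitrary even function $f$, set $y(x):=f(x-1/2)$, and verify directly that $y$ satisfies (\ref{eq:xplsoneminusx}): compute $y(x+1)=f(x+1/2)$ and $y(-x)=f(-x-1/2)=f\bigl(-(x+1/2)\bigr)=f(x+1/2)$, where the final equality uses evenness of $f$. The two expressions agree, so $y$ is indeed a solution. Together the two implications establish a bijection between solutions $y$ of the equation and even functions $f$, which is precisely the parametrization $y(x)=f(x-1/2)$ claimed in the statement.

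There is no genuine analytic obstacle here, since the argument is a change of variables combined with the elementary equivalence already recorded in the excerpt. The only point requiring care is to confirm that the correspondence $y\leftrightarrow f$ is truly bijective and unconstrained, so that the phrase ``all functions'' in the statement is justified: an arbitrary even $f$ (with no continuity, measurability, or other hypothesis) yields a valid solution, and conversely every solution arises this way. I would therefore state and verify both implications explicitly, rather than merely checking that functions of the proposed form satisfy the equation.
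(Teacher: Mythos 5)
Your proof is correct and takes essentially the same route as the paper's own: verify directly that $y(x)=f(x-1/2)$ with $f$ even satisfies $y(x+1)=y(-x)$, and conversely use the rewriting $y(1/2-x)=y(1/2+x)$ of (\ref{eq:plushalfminushalf}) to conclude that $f(t):=y(t+1/2)$ is even. One cosmetic slip: your parenthetical substitution should read $x=t$, not $x=1/2+t$, in order for (\ref{eq:plushalfminushalf}) to become $f(-t)=f(t)$; the surrounding computation is right as written.
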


\begin{proof}
Suppose $y(x)= f(x-1/2)$, where $f$ is an even function, that is, $f(t)=f(-t)$ for all $ t \in \mathbb{R} $. Then, for all $ x \in \mathbb{R} $,
 $$ y(-x)= f(-x-1/2)= f(x+1/2)= f((x+1)-1/2)= y(x+1) . $$
 This shows that such a $y$ satisfies the functional equation (\ref{eq:xplsoneminusx}).

Conversely, assume $y$ satisfies  the functional equation (\ref{eq:xplsoneminusx}) for all $x \in \mathbb{R} $. Define $ y(t):=f(t - 1/2) $. Then for all $x$, by (\ref{eq:plushalfminushalf}),
 $$ f(x)= y(x+1/2) = y(1/2-x) = f(-x).$$
 so $f$ is an even function. Therefore,
 $$  y(x) =f(x-1/2), $$
 where $f$ is any even function. This completes the proof.
\end{proof}

The general solution of the functional equation $y(x+1)=y(-x)$  consists of all functions whose graphs are symmetric about the vertical line $x=1/2$.  This symmetry arises because, for any point $x$, the point $1-x$ is the reflection across the line $x=1/2$. Let $\mathcal{S}$ be the collection of all sets  $ S \subset \mathbb{R} $ such that
\begin{itemize}
  \item $\frac{1}{2} \in S $ for all $ S \in \mathcal{S }$,
  \item  For all $ S \in  \mathcal{S}$, the condition $\frac{1}{2} -x \in S \Leftrightarrow  \frac{1}{2} + x \in S  $ holds.
\end{itemize}
 Consequently, the initial value problem for this functional equation can be specified using an initial set $ I$, such that $ 1/2 \in I $, and  either $ x \in I $ or $1-x \in I $. If  the initial function $y(x) =f(x), x \in I  $ is given, then it can be extended  from $I$ to $S $, where $S $ is the smallest set in  $\mathcal{S}$ containing $I$, by the consideration of symmetry.

\begin{example}
  Let   $x_0 > -\frac{1}{2}$. The  initial value problem for the functional equation (\ref{eq:xplsoneminusx}) with the initial function $y(x)=f(x)$ defined on $ S = \left[\frac{1}{2},x_0+1 \right )$,  can be uniquely extended to the interval $[-x_0, x_0 +1 ] $, by
  $$y(x)= \begin{cases}
            f(x) & \mbox{if } x \in \left[\frac{1}{2},x_0+1 \right ),  \\
            f(1-x)  & \mbox{  if } x \in (-x_0, \frac{1}{2} ) .
          \end{cases}$$
The solution can not be extended to the whole of $\mathbb{R}$ for any choice of a bounded initial set $S$.
\end{example}

\begin{theorem}
  Let $y$ be an even 1-periodic function of period $1$. The $y$ is the solution of the functional equation $y(x+1)=y(-x)$.
\end{theorem}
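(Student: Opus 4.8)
The plan is to prove this by a direct two-step substitution, exploiting the two hypotheses on $y$ in sequence rather than appealing to the earlier general-solution characterization. I would begin by fixing an arbitrary $x \in \mathbb{R}$ and recording the two defining properties of $y$: since $y$ has period $1$, we have $y(x+1) = y(x)$; and since $y$ is even, we have $y(x) = y(-x)$.

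Next I would simply chain these two equalities to obtain $y(x+1) = y(x) = y(-x)$. Reading off the leftmost and rightmost terms gives exactly $y(x+1) = y(-x)$, which is the functional equation (\ref{eq:xplsoneminusx}). Because $x$ was arbitrary, the identity holds for all real $x$, and the verification is complete.

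I do not anticipate any genuine obstacle here: the statement is an immediate consequence of combining $1$-periodicity with evenness, and the whole proof reduces to a single chain of two substitutions. The only point worth flagging is conceptual rather than technical, namely that this theorem asserts only \emph{sufficiency} — every even $1$-periodic function solves the equation — and the converse fails. Indeed, the preceding characterization shows the general solution is $y(x) = f(x - 1/2)$ for an arbitrary even $f$, and such an $f$ need not be periodic; hence the even $1$-periodic functions form a proper subclass of the full solution set, precisely those solutions that are symmetric about both $x = 1/2$ and $x = 0$.
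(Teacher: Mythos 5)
Your proof is correct and is essentially identical to the paper's own argument: both verify the claim by the two-step chain $y(x+1) = y(x) = y(-x)$, invoking $1$-periodicity and then evenness. Your closing observation that the theorem gives only sufficiency (even $1$-periodic functions being the solutions symmetric about both $x=0$ and $x=1/2$, a proper subclass of the general solution $y(x)=f(x-1/2)$ with $f$ even) is accurate but not part of the paper's proof.
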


\begin{proof}
The proof follows from the following steps:
\begin{align*}
y(x+1) &= y(x) && \text{(by 1-periodicity)} \\
&= y(-x) && \text{(by evenness)}.
\end{align*}
\end{proof}

\begin{example}
  The $1$-periodic even function, $y(x)= \cos (2 \pi x) $, is a solution of the functional equation  (\ref{eq:xplsoneminusx}). Note that $y(x)= f(x-1/2)$, where $ f(x)= -\cos (2 \pi x)$.
\end{example}

\begin{theorem}
  Let $y$ be an odd $1$-antiperiodic function. Then $y$ is the solution of the functional equation (\ref{eq:xplsoneminusx}).
\end{theorem}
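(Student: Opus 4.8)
The plan is to mimic the proof of the preceding theorem for the even $1$-periodic case, chaining together the two defining properties of $y$. First I would recall the relevant definitions: a function $y$ is \emph{odd} when $y(-x) = -y(x)$ for every $x \in \mathbb{R}$, and $y$ is \emph{$1$-antiperiodic} when $y(x+1) = -y(x)$ for every $x \in \mathbb{R}$. These are precisely the two hypotheses, and together they force the desired identity through a single sign cancellation.

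The key computation is then a two-step chain. Starting from the antiperiodicity relation, I would write $y(x+1) = -y(x)$, and then invoke oddness in the form $-y(x) = y(-x)$ to conclude $y(x+1) = y(-x)$, which is exactly equation (\ref{eq:xplsoneminusx}). Concretely:
\begin{align*}
y(x+1) &= -y(x) && \text{(by $1$-antiperiodicity)} \\
&= y(-x) && \text{(by oddness)}.
\end{align*}
Since both properties are assumed to hold for all real $x$, the identity holds for all $x \in \mathbb{R}$, and hence $y$ satisfies the functional equation.

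There is essentially no obstacle here: the argument is a direct substitution, and the only point requiring care is the bookkeeping of signs—ensuring that the minus sign produced by antiperiodicity is exactly the one absorbed by oddness. Unlike the even $1$-periodic case, where both relations are sign-preserving, here each relation contributes a factor of $-1$ and these cancel, which is precisely why the \emph{odd} and \emph{antiperiodic} hypotheses must be paired. Indeed, an odd $1$-periodic function, or an even $1$-antiperiodic function, would instead yield $y(x+1) = -y(-x)$ rather than (\ref{eq:xplsoneminusx}), so the compatibility of the two chosen symmetries is the substantive content of the statement.
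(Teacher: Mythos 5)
Your proof is correct and is essentially identical to the paper's own: both chain $y(x+1) = -y(x)$ (antiperiodicity) with $-y(x) = y(-x)$ (oddness) to obtain equation (\ref{eq:xplsoneminusx}). Your additional remark on why the two sign-reversing symmetries must be paired is a nice observation but does not change the argument.
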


\begin{proof}
The proof follows from the following steps:
\begin{align*}
y(x+1) &= -y(x) && \text{(by 1-antiperiodicity)} \\
&= y(-x) && \text{(by oddness)}.
\end{align*}
\end{proof}

\begin{example}
The $1$-antiperiodic odd function, $y(x)= \sin (\pi x) $,  is a solution of the functional equation (\ref{eq:xplsoneminusx}). Note that $y(x)= f(x-1/2)$, where $ f(x)= \cos ( \pi x)$.
\end{example}

\subsection{ The functional equation  $ y(x+1)= y(bx), b < -1 $ }

\begin{theorem}
  Let $b < -1$. The initial value problem for the functional equation (\ref{eq:xplus1bx})  with initial value $y = y_0(x)$ given on a set
  $$  I_0(\epsilon, b):= ( bx^* -|b|\epsilon ,  bx^* -\epsilon ]  \cup [  bx^* + \epsilon ,  bx^* + |b|\epsilon   )$$
   has a unique solution on $\mathbb{R} \setminus \{x^*\}$.
\end{theorem}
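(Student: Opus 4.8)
The plan is to mirror the argument carried out for $-1<b<0$, but with the two iterations interchanged, since for $b<-1$ they swap their dynamical roles. First I would record that both the clockwise map (\ref{eq:spiralinrecurrence}) and the counterclockwise map (\ref{eq:spiraloutrecurrence}) fix the point $bx^*$ and preserve the unknown along orbits, i.e. $y(x_n(x))=y(x)$ by (\ref{eq:levelsequenceinward}) and (\ref{eq:levelsequenceoutward}); so $bx^*$ is again the accumulation point to be excluded and the natural solution domain is $\mathbb{R}\setminus\{bx^*\}$. From the closed forms (\ref{eq:solntospiralinrecurrence}) and (\ref{eq:solnforspiralout}) one reads off $|x_n(x)-bx^*|=|b|^{n}\,|x-bx^*|$ for the clockwise iteration and $|x_n(x)-bx^*|=|b|^{-n}\,|x-bx^*|$ for the counterclockwise one. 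Because now $|b|>1$, the clockwise iteration is \emph{expanding} (it pushes points away from $bx^*$) while the counterclockwise iteration is \emph{contracting} (it converges to $bx^*$), exactly the reversal of the $-1<b<0$ situation.

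Next I would observe that the initial set is precisely the symmetric annulus $I_0(\epsilon,b)=\{x:\epsilon\le|x-bx^*|<|b|\epsilon\}$, which is a fundamental domain for multiplication of the distance $d:=|x-bx^*|$ by $|b|$. Accordingly I partition $\mathbb{R}\setminus\{bx^*\}$ into $I_0$, the outer region $\{d\ge|b|\epsilon\}$, and the punctured inner ball $\{0<d<\epsilon\}$. For a point $x$ in the outer region I apply the contracting counterclockwise iteration and let $q$ be the least positive integer with $|b|^{-q}d<|b|\epsilon$; minimality forces $|b|^{-(q-1)}d\ge|b|\epsilon$, hence $\epsilon\le|b|^{-q}d<|b|\epsilon$, so $x_q(x)\in I_0$ and $y(x)=y_0(x_q(x))$. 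For $x$ in the inner ball I apply the expanding clockwise iteration and let $p$ be the least positive integer with $|b|^{p}d\ge\epsilon$; then $|b|^{p-1}d<\epsilon$ gives $\epsilon\le|b|^{p}d<|b|\epsilon$, so $x_p(x)\in I_0$ and $y(x)=y_0(x_p(x))$. Invariance along orbits makes these assignments forced, yielding existence together with an explicit piecewise formula analogous to (\ref{eq:summarizedsolution}).

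For uniqueness I would argue that each full orbit $\{x_n(x)=b^n(x-bx^*)+bx^*:n\in\mathbb{Z}\}$ meets $I_0$ in exactly one point: the distances $\{|b|^n d:n\in\mathbb{Z}\}$ form a two-sided geometric progression of ratio $|b|>1$ that sweeps out $(0,\infty)$, so exactly one of them lands in the half-open window $[\epsilon,|b|\epsilon)$, and the corresponding orbit point — whatever its sign — lies in $I_0$ because $I_0$ contains both intervals $bx^*\pm[\epsilon,|b|\epsilon)$. Since $y$ is constant on orbits, its value is completely determined by $y_0$ on this unique representative, so no two distinct extensions can agree with $y_0$ on $I_0$. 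I expect the main obstacle to be the bookkeeping of the sign oscillation: because $b<0$, each iterate flips side about $bx^*$, so I must confirm that the minimal-index iterate really falls inside $I_0$ (and not merely at the right distance on the wrong side) and that the counterclockwise procedure for the outer region terminates — both of which reduce to the clean distance estimates above once the annulus is recognized as a fundamental domain for scaling by $|b|$.
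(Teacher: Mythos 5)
Your proposal is correct and follows essentially the same route as the paper's proof: recognize that for $b<-1$ the two iterations swap roles (the counterclockwise map $x\mapsto x/b+1$ now contracts toward $bx^*$ and handles points outside $B(bx^*,|b|\epsilon)$, while the clockwise map $x\mapsto bx-b$ expands and handles the punctured inner ball), and in each case land in the annulus $I_0$. In fact your write-up is tighter than the paper's, which merely asserts the existence of the unique indices $r$ and $s$; your identification of $I_0$ as the fundamental domain $\{\epsilon\le|x-bx^*|<|b|\epsilon\}$, the minimality estimates pinning the landing iterate in that window, and the two-sided-orbit counting argument for uniqueness supply exactly the details the paper leaves implicit (including why the sign oscillation is harmless).
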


\begin{proof}
We proceed similar to the case of $ -1< b < 0 $. However In this case the counterclockwise iteration (\ref{eq:counterclockwiseiteration}) oscillates in sign about  the limit point $bx^*$ converges to it. For $ x \in\mathbb{ R } \setminus   B(bx^*, |b|\epsilon) $, we have a unique $r \in \mathbb{N} $ such that
$$ b^{-r} (x-bx^*)+bx^* \in I_0(x,\epsilon). $$
Consequently,
$$  y(x) = y(b^{-r} (x-bx^*)+bx^*)= y_0(b^{-r} (x-bx^*)+bx^*).   $$
The clockwise iteration  (\ref{eq:clockwiseiteration}) diverges out while oscillating in sign . For $  B(bx^*, |b|\epsilon) \setminus \{ bx^* \}$, there exists a unique a unique $s \in \mathbb{N} $ such that
$$ b^{s} (x-bx^*)+bx^* \in I_0(x,\epsilon). $$
Thus,
$$  y(x) = y(b^{s} (x-bx^*)+bx^*)= y_0(b^{s} (x-bx^*)+bx^*) .  $$
\end{proof}

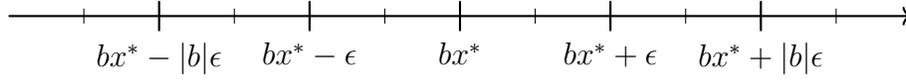
\begin{figure}[h]
\centering
\begin{tikzpicture}
% Number line (removed left arrowhead)
\draw[->, thick] (-6,0) -- (6,0);

% Tick marks and labels
% Center point
\draw[thick] (0,0.2) -- (0,-0.2) node[below] {$bx^*$};

% Right side points
\draw[thick] (2,0.2) -- (2,-0.2) node[below] {$bx^*+\epsilon$};
\draw[thick] (4,0.2) -- (4,-0.2) node[below] {$bx^*+|b|\epsilon$};

% Left side points
\draw[thick] (-2,0.2) -- (-2,-0.2) node[below] {$bx^*-\epsilon$};
\draw[thick] (-4,0.2) -- (-4,-0.2) node[below] {$bx^*-|b|\epsilon$};

% Scale indicators (optional)
\foreach \x in {-5,-4,-3,-2,-1,0,1,2,3,4,5}
    \draw (\x,0.1) -- (\x,-0.1);
\end{tikzpicture}
\caption{Number line showing points around $bx^*$ with $\varepsilon$ spacing}
\label{fig:number_line}
\end{figure}

\begin{table}[H]
\centering
\begin{tabular}{|l|l|l|}
\hline
\multicolumn{3}{|c|}{$0 > b > -1$} \\ \hline
\multicolumn{1}{|c|}{$x_0 > x^*$} & $I_0 = [bx_0, bx^*- \epsilon)\cup (bx^*+\epsilon, x_0+1)    $ & $I_{\text{max}} =  \mathbb{R} \setminus \{ bx^*\}$ \\ \hline
\multicolumn{1}{|c|}{$x_0 < x^*$} & $I_0 = (x_0 + 1, b x^*-\epsilon ) \cup [ bx^* + \epsilon, bx_0 ) $ & $I_{\text{max}} = \mathbb{R} \setminus \{ bx^*\}] $ \\ \hline
\multicolumn{3}{|c|}{$b = -1$} \\ \hline
\multicolumn{1}{|c|}{Any $ c \neq 0 $} & $I_0 = \{\frac{1}{2} +c \}$ & $I_{\text{max}} = \{\frac{1}{2} +c, \frac{1}{2} - c  \}$ \\ \hline
\multicolumn{3}{|c|}{$b < -1$} \\ \hline
\multicolumn{1}{|c|}{$x_0 > x^*$} & $ I_0 = [bx_0, bx^*- \epsilon)\cup (bx^*+\epsilon, x_0+1 ) $ & $I_{\text{max}} = \mathbb{R} \setminus \{bx^* \}$ \\ \hline
\multicolumn{1}{|c|}{$x_0 < x^*$} & $I_0 = [ x_0+1, bx^*-\epsilon) \cup (bx^*+\epsilon, bx_0 )$ & $I_{\text{max}} = \mathbb{R} \setminus \{bx^* \}$ \\ \hline
\end{tabular}
\caption{The maximal  set $I_{\text{max}}  $  of existence of solution for the IVP for $y(x+1)=y(bx), b<0 $, with a initial data $ y(x)=y_0(x), x\in I_0 $ .}
\end{table}

%We have to calculate the values of $y$ on the complement of the initial set $I_0(\epsilon, b)$ except at $x^*$.
%  Let  $  x \in ( bx^* -\epsilon ,   bx^* +\epsilon) $. We have
%  $$ y(x)=y((x-1)+1)= y(b(x-1))$$
%  which gives rise to an iteration:
%  $$ x_{n+1}= b(x_n-1)$$
%  The solution for this iteration() is
%  $$x_n= b^n(x-bx^*)+bx^* $$
%Since the iterates $x_n$ for $b<0$ are oscillatory,  define the interval $\tilde{I}_{k}(x_0,b)$ as $[bx_k,b_{k+1}$ or $[,b_{k+1},    bx_k$ according to    $b x_k < bx_{k+1}$ or  $bx_k > bx_{k+1}$

 %\begin{remark}
%   We have seen that the initial value problem () with the initial data $y(x)=y_0(x)$ set on the interval $I_0(x)$ has a unique solution $y(x)$ that is extendible to $(x^*,\infty)$.  The  main question to be raised here is what other sets $S$ can be considered so that whenever the initial function $y(x)=y_0(x)$ set on $S$ yields a unique solution on $(x^*,\infty)$. Let us consider $0<b<1$ and $x_0 > x^*$.
%\end{remark}

\section{The functional equations $y(x)=y(2x)$ and $y(x)=y(-2x)$}

\subsection{ The functional equation $y(x)= y(2x)$  }
\begin{theorem}
 Consider the  functional equation
 \begin{equation}\label{eq:2x1x}
   y(x)=y(2x)
 \end{equation}
The equation (\ref{eq:2x1x}) is consistent.
\end{theorem}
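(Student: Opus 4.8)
The plan is to establish consistency in the strongest readily available sense: not merely that a solution exists, but that a whole family of solutions can be produced explicitly. The quickest observation is that any constant function $y(x)=c$ satisfies (\ref{eq:2x1x}) trivially, since both sides equal $c$; this alone proves the equation is consistent. To exhibit nontrivial solutions, and to expose the structure that will be reused for later equations, I would pass to a logarithmic coordinate.

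First I would treat the three orbit-types of the scaling map $x\mapsto 2x$ separately. The map fixes $0$, sends $(0,\infty)$ into itself, and sends $(-\infty,0)$ into itself; hence the value $y(0)$ is arbitrary (the equation there reads $y(0)=y(0)$), and the positive and negative half-lines may be handled independently. On $(0,\infty)$ I would set $t=\log_2 x$ and define $g(t):=y(2^t)$; then $y(2x)=y(x)$ becomes $g(t+1)=g(t)$, i.e. $g$ is $1$-periodic. This is exactly equation (\ref{eq:1periodicequation}) from the introduction, whose general solution is an arbitrary $1$-periodic function. The identical substitution $t=\log_2(-x)$ on $(-\infty,0)$ yields a second, independent $1$-periodic function.

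Running the correspondence backwards, I would construct a solution by choosing any $1$-periodic $g$ and setting $y(x)=g(\log_2|x|)$ for $x\neq 0$ together with an arbitrary value at $x=0$. A concrete nontrivial witness is $y(x)=\cos(2\pi\log_2|x|)$ for $x\neq 0$: since $\log_2|2x|=\log_2|x|+1$, the cosine is unchanged under $x\mapsto 2x$, so (\ref{eq:2x1x}) holds on $\mathbb{R}\setminus\{0\}$, and any value may be assigned at $0$. This both demonstrates consistency and shows the solution space is infinite-dimensional.

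The only delicate point—and the one I would flag as a conceptual rather than a computational obstacle—is the fixed point $x=0$. It is the limit point of the functional equation in the sense developed earlier: every orbit $\{2^n x\}$ accumulates at $0$ as $n\to-\infty$, so no finite initial set near $0$ can control the values there and $y(0)$ decouples entirely. For consistency this causes no trouble, since $x=0$ imposes no constraint; but it is precisely the phenomenon that the PENLP is designed to address once one later demands \emph{uniqueness} of the extension from a prescribed initial set.
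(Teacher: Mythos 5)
Your proposal is correct, and its opening observation --- that any constant function $y(x)=c$ satisfies $y(x)=y(2x)$ trivially --- is precisely the paper's entire proof of this theorem. The rest of your argument (the logarithmic substitution $t=\log_2|x|$ reducing the equation to $1$-periodicity of $g(t)=y(2^t)$, the witness $y(x)=\cos(2\pi\log_2|x|)$, and the decoupled value at the fixed point $0$) is sound but goes beyond what consistency requires; it essentially reproduces the paper's subsequent Remark, which records the closed-form general solution $y(x)=\mu(\ln x)$ with $\mu\in\mathbb{P}_{\ln 2}$.
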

\begin{proof}
  Any constant function $y(x)=c $  is a solution of (\ref{eq:2x1x}) .
\end{proof}
\begin{theorem}
  The only  solutions $f$ of the functional equation(\ref{eq:2x1x}) that are continuous at $x=0$  are the  constant functions $f(x)=c $.
\end{theorem}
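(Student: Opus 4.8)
The plan is to exploit the scaling invariance of equation (\ref{eq:2x1x}) to show that $y$ is constant along the orbit $\{2^n x : n \in \mathbb{Z}\}$ of every point, and then to force all of these orbit values to collapse onto the single value $y(0)$ by pushing the orbit toward the origin, where continuity is assumed.

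First I would establish that $y(x) = y(2^n x)$ for every $x \in \mathbb{R}$ and every integer $n$. Iterating (\ref{eq:2x1x}) forward gives $y(x) = y(2x) = y(4x) = \cdots = y(2^n x)$ for $n \geq 0$; substituting $x \mapsto x/2$ into (\ref{eq:2x1x}) gives the reverse step $y(x/2) = y(x)$, so the identity extends to all negative $n$ as well. In the operator notation introduced earlier this is just the statement $S^{2^n} y = y$ for all $n$, which follows from the composition rule $S^{a} \circ S^{a} \circ \cdots \circ S^{a} = S^{a^n}$ together with $S^1 = I$.

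Next, fix an arbitrary $x \in \mathbb{R}$ and consider the sequence $x_n := 2^{-n} x$. Since $\abs{x_n} = 2^{-n}\abs{x} \to 0$, we have $x_n \to 0$ as $n \to \infty$, and continuity of $y$ at the single point $x = 0$ yields $y(x_n) \to y(0)$. On the other hand, the orbit identity from the previous step gives $y(x_n) = y(2^{-n}x) = y(x)$ for every $n$, so the sequence $\{y(x_n)\}$ is in fact constant and equal to $y(x)$. A constant sequence can only converge to its own value, so $y(x) = y(0)$. As $x$ was arbitrary, $y$ is the constant function $y(x) = y(0) =: c$, and conversely every such constant clearly satisfies (\ref{eq:2x1x}).

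There is no serious obstacle here; the one subtlety worth flagging is that continuity is hypothesized only at the origin, so the argument is deliberately designed to route every orbit through a neighborhood of $0$ rather than relying on continuity elsewhere. This is precisely why the inward (halving) iteration $x_n = 2^{-n}x$, and not the outward one $2^n x$, is the branch that does the work: the halving iterates accumulate at the one point where we are permitted to pass to the limit.
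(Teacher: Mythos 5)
Your proposal is correct and follows essentially the same route as the paper: both arguments use the halving iterates $x/2^n \to 0$ together with continuity at the single point $0$ to conclude $y(x) = y(0)$ for every $x$. The only differences are cosmetic — you additionally record the full orbit identity for negative $n$ and state the trivial converse, neither of which is needed for the core argument.
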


\begin{proof}
  Let $y(x)= f(x) $ is a solution of the functional equation(\ref{eq:2x1x}) that is continuous at $x=0$. Then
  $$ f(x)= f\left(\frac{x}{2^n}\right),\quad   \forall   n \in \mathbb{N},\quad \forall x \in \mathbb{R}  $$
  Applying the limit as $ n \rightarrow \infty $ and by continuity of $f$ at $x=0$,
  $$  f(x)= \lim_{n\rightarrow \infty}  f\left(\frac{x}{2^n}\right)=   f\left(  \lim_{n\rightarrow \infty}.     \frac{x}{2^n}\right) = f(0). $$
 \end{proof}

  \begin{remark}
    The functional equation (\ref{eq:2x1x}) has a closed-form solution
    $$ y(x)= \mu (\ln (x)), \quad \mu \in \mathbb{P}_{\ln 2}. $$
    This is a particular case of the more general functional equation:
    $$ y(ax)-by(x)=0, a>0,b>0. $$
    whose solutions are given by
    $$ y(x)= \mu (\ln (x)) x^{\lambda}, $$
    where $\lambda = \frac{\ln b }{\ln a}, \mu \in \mathbb{P}_{\ln a} $. See, for instance, (\cite{EW}).
  \end{remark}

  \begin{theorem}
    Let $\epsilon $ be any positive number. Then the initial value problem for the functional equation (\ref{eq:2x1x}) with initial function
    \begin{equation}\label{eq:IVPfor2x1x}
       y(x)=f(x), x \in  [\epsilon, 2\epsilon ),
    \end{equation}
    has a unique solution on the interval $(0, \infty)$.
  \end{theorem}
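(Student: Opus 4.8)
The plan is to exploit the fact that the doubling map $x \mapsto 2x$ partitions $(0,\infty)$ into a bi-infinite chain of fundamental domains obtained by scaling $[\epsilon, 2\epsilon)$, in direct analogy with the way integer shifts partitioned $\mathbb{R}$ in the $1$-periodic case of the introduction. First I would verify that the intervals $J_n := [2^n\epsilon,\, 2^{n+1}\epsilon)$, $n \in \mathbb{Z}$, are pairwise disjoint and satisfy $\bigcup_{n\in\mathbb{Z}} J_n = (0,\infty)$; this is immediate, since $x\mapsto 2x$ maps $(0,\infty)$ into itself and every positive $x$ admits a unique representation $x = 2^n t$ with $t \in [\epsilon, 2\epsilon)$, namely $n = \lfloor \log_2(x/\epsilon)\rfloor$.

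Next I would establish existence by defining the candidate extension
$$ y(x) := f\!\left(\frac{x}{2^{n(x)}}\right), \qquad n(x) := \left\lfloor \log_2\!\frac{x}{\epsilon}\right\rfloor, $$
so that $x/2^{n(x)} \in [\epsilon, 2\epsilon)$ always lands in the domain of $f$. To check that this satisfies (\ref{eq:2x1x}), I would observe that doubling $x$ increments the index by exactly one, that is $n(2x) = n(x)+1$, because $x \in J_n \Leftrightarrow 2x \in J_{n+1}$; hence $2x/2^{n(2x)} = x/2^{n(x)}$ and therefore $y(2x) = y(x)$. On $[\epsilon, 2\epsilon)$ one has $n(x) = 0$, so $y(x) = f(x)$ and the initial data is recovered.

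For uniqueness I would iterate the functional equation: any solution of $y(x) = y(2x)$ satisfies $y(x) = y(x/2) = \cdots = y(x/2^m)$ for every $m$, equivalently $y(x) = y(2^k x)$ for all $k \in \mathbb{Z}$. Given $x \in (0,\infty)$, the choice $k = -n(x)$ carries the argument into $[\epsilon, 2\epsilon)$ and forces $y(x) = f(x/2^{n(x)})$; thus the value at every point is pinned down by the data, and the solution is unique. This is essentially the same bookkeeping as the existence step, read backwards.

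I do not anticipate a genuine obstacle, as the argument is a clean transcription of the $1$-periodic construction under the substitution $u = \log_2 x$, which converts $x \mapsto 2x$ into the unit shift $u \mapsto u+1$ and turns $[\epsilon, 2\epsilon)$ into a unit interval, reducing the claim to the periodic result $(\ref{eq:solnforIVP1periodic})$. The only point requiring care is the behavior at the half-open endpoints: the choice $[\epsilon, 2\epsilon)$, closed on the left and open on the right, is exactly what makes $n(x)$ single-valued and the doubling-induced correspondence $J_n \to J_{n+1}$ a bijection, so that no consistency condition or redundancy in the data arises.
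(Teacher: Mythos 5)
Your proof is correct and takes essentially the same approach as the paper: both decompose $(0,\infty)$ into the pairwise disjoint dyadic intervals $[2^n\epsilon, 2^{n+1}\epsilon)$ (the paper writes $[2\epsilon,\infty)$ and $(0,\epsilon)$ as two one-sided unions where you use a single bi-infinite index $n(x)=\lfloor \log_2(x/\epsilon)\rfloor$) and then iterate the equation to carry each point into the fundamental domain $[\epsilon,2\epsilon)$, obtaining $y(x)=f\bigl(x/2^{n(x)}\bigr)$. If anything, your version is slightly more complete, since you explicitly verify via $n(2x)=n(x)+1$ that the constructed extension actually satisfies $y(x)=y(2x)$, a step the paper leaves implicit.
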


  \begin{proof}
    From the initial data, $   y(x)=f(x), x \in  [\epsilon, 2\epsilon) $. Let $x \in [2\epsilon,  \infty) $. Since

    \begin{equation}\label{eq:2epsilontoinfinity}
      [2\epsilon,  \infty) =    \bigcup_{n=1}^\infty [2^n \epsilon, 2^{n+1}\epsilon ),
    \end{equation}
    which is a union countable disjoint intervals. Therefore $\frac{x}{2^n} \in     [\epsilon, 2\epsilon) $ for a  unique $n\in \mathbb{N} $. Consequently,
    $$ y(x)= y\left(\frac{x}{2^n}\right)=f\left(\frac{x}{2^n}\right). $$
    On the other hand
    \begin{equation}\label{eq:0toepsilon}
       (0,\epsilon) =  \bigcup_{n=1}^\infty \left[\frac{\epsilon}{2^n}, \frac{\epsilon}{2^{n-1}},
     \right)
    \end{equation}
 which is a union of disjoint intervals. Therefore, for any $x \in (0,\epsilon) $ there exists a unique $n\in \mathbb{N} $ such that $x2^n \in    [\epsilon, 2\epsilon) $. Consequently,
     $$y(x)= y(2^n x) =  f(2^nx) $$
    Therefore we have unique solution of the initial value problem for the functional equation (\ref{eq:IVPfor2x1x}) on $(0,\infty )$.
  \end{proof}
\begin{example}
Consider the IVP for (\ref{eq:2x1x}), with initial data $y(x)=x, 1 \leq x < 2 $. The solution is then given by:
  $$
  y(x) =\begin{cases}
                        & 2^nx, \frac{1}{2^n} \leq x < \frac{1}{2^{n-1}}\\
                        &\vdotswithin{ = } \\[-1ex]\\
                        & 2x, \quad  \mbox{ if }\quad  \frac{1}{2} \leq x <1,  \\
                        & x , \quad  \mbox{if }\quad  1 \leq x < 2,  \\
                        & \frac{1}{2}x,\quad  \mbox{ if } \quad  2 \leq  x < 4,  \\
                        &\vdotswithin{ = } \\[-1ex]
                        & \frac{1}{2^n}x,\quad  \mbox{ if } \quad  2^n  \leq  x < 2^{n+1}, n \in \mathbb{N}   .
  \end{cases}
  $$
\end{example}

 \begin{theorem}
    Let $\delta $ be any positive number. Then the initial value problem for the functional equation (\ref{eq:2x1x}) with initial function
    \begin{equation}\label{eq:IVPfor2x1x}
       y(x)=f(x),\quad  x \in I_0:= (-2\delta, -\delta],
    \end{equation}
    has a unique solution on the interval $ (-\infty,0) $.
  \end{theorem}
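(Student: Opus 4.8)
The plan is to mirror the proof for the case $y(x)=y(2x)$ on $(0,\infty)$, adapting it to the negative half-line. The functional equation $y(x)=y(2x)$ couples $x$ with $2x$, and since doubling a negative number keeps it negative while moving it away from $0$, the entire negative half-line $(-\infty,0)$ splits naturally into a countable disjoint union of ``dyadic'' blocks anchored at the initial interval $I_0 = (-2\delta, -\delta]$. The key observation is that the map $x \mapsto 2x$ shifts each block $(-2^{n+1}\delta, -2^n\delta]$ to the next one outward, so every point of $(-\infty,0)$ is reachable from $I_0$ by a unique number of halvings or doublings.

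First I would establish the two partition identities analogous to (\ref{eq:2epsilontoinfinity}) and (\ref{eq:0toepsilon}). For the ``outer'' region I would write
\begin{equation*}
(-\infty, -2\delta] = \bigcup_{n=1}^{\infty} \left(-2^{n+1}\delta,\, -2^{n}\delta\right],
\end{equation*}
a disjoint union, so that for any $x \in (-\infty,-2\delta]$ there is a unique $n\in\mathbb{N}$ with $x/2^{n} \in (-2\delta,-\delta] = I_0$. For the ``inner'' region near the origin I would write
\begin{equation*}
(-\delta, 0) = \bigcup_{n=1}^{\infty} \left(-\frac{\delta}{2^{n-1}},\, -\frac{\delta}{2^{n}}\right],
\end{equation*}
again disjoint, so that for any $x\in(-\delta,0)$ there is a unique $n\in\mathbb{N}$ with $2^{n}x \in I_0$. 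I would double-check the half-open orientation of each interval so that the union is genuinely disjoint and exhausts the target set, matching the $[\,\cdot\,)$ versus $(\,\cdot\,]$ convention forced by $I_0=(-2\delta,-\delta]$.

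Next I would use the functional equation to propagate the initial data. Iterating $y(x)=y(2x)$ gives $y(x)=y(2^{n}x)$ for all $n$, and equivalently $y(x)=y(x/2^{n})$; so for $x$ in the outer region I set $y(x)=f(x/2^{n})$ with the unique $n$ above, and for $x$ in the inner region I set $y(x)=f(2^{n}x)$. Together with $y=f$ on $I_0$ itself, this defines $y$ on all of $(-\infty,0)$, and I would note that existence is immediate from the construction while uniqueness follows because the block containing any given $x$ and the exponent $n$ returning it to $I_0$ are both uniquely determined by the disjointness of the unions.

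I do not expect a serious obstacle here, since the argument is structurally identical to the already-proved positive-half-line theorem; the only care needed is bookkeeping the interval endpoints so the half-open intervals tile correctly and $0$ is excluded (the point $0$ is the limit point of the equation and must be avoided, consistent with the PENLP). The mildly delicate point is verifying that the inner-region decomposition indexes correctly with $2^{n}x\in I_0$ rather than off by one; I would confirm this by checking the boundary case $x\in(-\delta,-\delta/2]$ maps under $x\mapsto 2x$ into $(-2\delta,-\delta]=I_0$, fixing $n=1$, and then inducting outward.
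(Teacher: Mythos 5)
Your proposal is correct and is exactly the argument the paper intends: the paper states this theorem without a separate proof, leaving it as the mirror image of the preceding positive-half-line theorem, and your dyadic decompositions $(-\infty,-2\delta] = \bigcup_{n\ge 1}\left(-2^{n+1}\delta, -2^{n}\delta\right]$ and $(-\delta,0)=\bigcup_{n\ge 1}\left(-\delta/2^{n-1}, -\delta/2^{n}\right]$ are precisely the analogues of the paper's partitions (\ref{eq:2epsilontoinfinity}) and (\ref{eq:0toepsilon}), with the half-open orientation correctly reversed to match $I_0=(-2\delta,-\delta]$. Your propagation via $y(x)=f(x/2^{n})$ on the outer region and $y(x)=f(2^{n}x)$ on the inner region, with uniqueness from the disjointness of the blocks, matches the paper's method in full.
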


 \begin{theorem}
    Let $\epsilon, \delta $ be any positive numbers. Then the initial value problem for the functional equation (\ref{eq:2x1x}) with initial function
    \begin{equation}\label{eq:IVPfor2x1x}
       y(x)=f(x),\quad  x \in I_0:= (-2\delta, -\delta] \cup  [\epsilon, 2\epsilon) ,
    \end{equation}
    has a unique solution on the set  $ \mathbb{R} \setminus \{0\}$.
  \end{theorem}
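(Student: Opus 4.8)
The plan is to reduce the two-sided problem to the two one-sided problems that have already been solved in the two immediately preceding theorems, exploiting the fact that the functional equation $y(x)=y(2x)$ never mixes positive and negative arguments. First I would observe that the scaling map $x \mapsto 2x$ preserves the sign of $x$ and fixes only $0$; consequently, the positive half-line $(0,\infty)$, the negative half-line $(-\infty,0)$, and the singleton $\{0\}$ are each invariant under both $x\mapsto 2x$ and $x\mapsto x/2$. This means the functional equation decouples completely across the partition $\mathbb{R}\setminus\{0\} = (0,\infty)\,\sqcup\,(-\infty,0)$, so that determining $y$ on the positive reals uses only initial data on the positive reals and likewise for the negatives.

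The key steps, in order, are as follows. First, restrict attention to $x \in (0,\infty)$: the initial data on $[\epsilon,2\epsilon)$ is exactly the hypothesis of the earlier theorem giving a unique solution on $(0,\infty)$, so I would invoke that theorem verbatim to obtain $y(x)=f(x/2^n)$ for the unique $n\in\mathbb{Z}$ with $x/2^n \in [\epsilon,2\epsilon)$. Second, restrict attention to $x \in (-\infty,0)$: the initial data on $(-2\delta,-\delta]$ is exactly the hypothesis of the theorem immediately preceding, giving a unique solution on $(-\infty,0)$; I would invoke that theorem to obtain $y(x)=f(2^m x)$ for the unique $m\in\mathbb{Z}$ carrying $x$ into $(-2\delta,-\delta]$. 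Third, I would note that the two initial intervals $(-2\delta,-\delta]$ and $[\epsilon,2\epsilon)$ are disjoint (one negative, one positive) so the combined initial function is well defined, and that the two solution regions $(0,\infty)$ and $(-\infty,0)$ together are precisely $\mathbb{R}\setminus\{0\}$. Finally I would assemble the piecewise solution and argue uniqueness: any solution must agree with the positive-side solution on $(0,\infty)$ and with the negative-side solution on $(-\infty,0)$ by the uniqueness already established in each one-sided theorem.

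The only genuinely substantive point, and hence the main obstacle, is justifying the \emph{decoupling}: I must make explicit that no iterate of the recurrence ever crosses between the positive and negative half-lines, so that the value of $y$ at a positive point is never constrained by initial data at a negative point (and vice versa). This is what guarantees that gluing the two one-sided solutions produces a genuine global solution with no consistency conflict, and that $0$ is a genuine limit point of the iteration that must be excluded (since $x/2^n \to 0$ forces the orbit to accumulate at $0$ without ever reaching it, leaving $y(0)$ undetermined). Everything else is a direct citation of the two preceding theorems, so I would keep the proof short: state the decoupling, apply each one-sided result on its half-line, observe the initial intervals are disjoint and the union of solution regions is $\mathbb{R}\setminus\{0\}$, and conclude.
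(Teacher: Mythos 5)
Your proof is correct and follows exactly the route the paper intends: the paper states this theorem without any proof, as an immediate consequence of the two preceding one-sided theorems, and your argument — decoupling via sign-invariance of $x \mapsto 2x$ and $x \mapsto x/2$, then invoking each one-sided result on its half-line and gluing — is precisely that omitted argument. Your explicit justification of the decoupling (no iterate of the scaling ever crosses $0$, so positive data never constrains negative points and $y(0)$ remains free, forcing the exclusion of $0$) is a worthwhile detail that the paper leaves implicit.
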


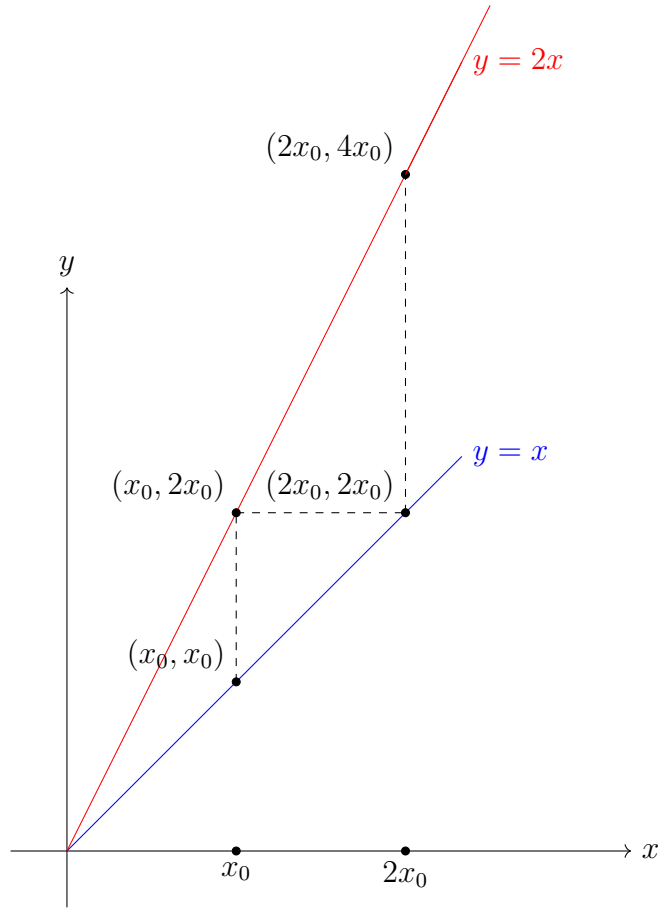
\begin{figure}[H]
\centering
\begin{tikzpicture}[scale=1.5]
    % Axes
    \draw[->] (-0.5,0) -- (5,0) node[right]{$x$};
    \draw[->] (0,-0.5) -- (0,5) node[above]{$y$};

    % Lines y=x and y=2x (elongated)
    \draw[domain=0:3.5, smooth, variable=\x, blue] plot ({\x}, {\x}) node[right]{$y=x$};
    \draw[domain=0:3.5, smooth, variable=\x, red] plot ({\x}, {2*\x}) node[right]{$y=2x$};

    % Arbitrary x_0 point (shown symbolically)
    \coordinate (x0) at (1.5,0); % Position for x_0 (arbitrary)
    \filldraw (x0) circle (1pt) node[below]{$x_0$};

    % Points and vertical line at x_0
    \coordinate (x0x0) at (1.5,1.5);
    \coordinate (x02x0) at (1.5,3);
    \filldraw (x0x0) circle (1pt) node[above left]{$(x_0,x_0)$};
    \filldraw (x02x0) circle (1pt) node[above left]{$(x_0,2x_0)$};
    \draw[dashed] (x0x0) -- (x02x0);

    % Points at 2x_0
    \coordinate (2x0) at (3,0);
    \coordinate (2x02x0) at (3,3);
    \coordinate (2x04x0) at (3,6);
    \filldraw (2x0) circle (1pt) node[below]{$2x_0$};
    \filldraw (2x02x0) circle (1pt) node[above left]{$(2x_0,2x_0)$};
    \filldraw (2x04x0) circle (1pt) node[above left]{$(2x_0,4x_0)$};

    % Horizontal and vertical lines at 2x_0
    \draw[dashed] (x02x0) -- (2x02x0) node[midway,above] {};
    \draw[dashed] (2x02x0) -- (2x04x0);

    % Extend y=2x further (since 4x_0 goes beyond original domain)
    \draw[domain=3:3.75, smooth, variable=\x, red] plot ({\x}, {2*\x});
\end{tikzpicture}
\caption{Construction showing relationships between points on $y=x$ and $y=2x$.
All coordinates are expressed in terms of an arbitrary $x_0 > 0$.
The dashed lines connect corresponding points in the construction.}
\end{figure}

% \begin{definition}
%    Consider the initial value problem  functional equation:
%    $$ y(x)=y(2x), x \in  \mathbb{ R}, \quad y(x)=f(x), x \in S. $$
%  \end{definition}
%Then the set $S$ is called compatible set for the functional equation if the initial data  $f$ can be can be extended to the whole of $\mathbb{R}$  in a well defined manner.
%
%\begin{example}
%  Let $ \epsilon > 0 $. The set $I_0 = (-2\delta, \delta] \cap \{0\} \cap [\epsilon, 2\epsilon)$ is a compatible set for the functional equation (\ref{eq:2x1x}). To show this, let $f$ be an arbitrary function defined on the set $S$. Let $x \geq 2\epsilon $. Then there exists $n \in\mathbb{ N }$ such that $\frac{x}{2^n} \in  [\epsilon, 2\epsilon)  $. To the contrary suppose  either that $\frac{x}{2^n} \geq 2\epsilon $ for all $n \in \mathbb{N} $ or $ 0 < \frac{x}{2^n} < \epsilon $ for all $n \in \mathbb{N} $. However, the former is impossible as $  \frac{x}{2^n} $ converges to $0$ as $n \rightarrow \infty $. The later case implies $\frac{x}{2} < \epsilon $, when we fix $n=1$. This contradicts the assumption that $  x \geq 2\epsilon $. Now consider the case $0 < x < \epsilon $. Then there exists $n \in N $ such that $2^nx \in [\epsilon, 2\epsilon)$.
%\end{example}

\begin{theorem}
    Consider the functional equation (\ref{eq:2x1x}), with initial value $y_0$  defined on  an interval $[a,b),\,0< a < b $. Then the following  possible cases arise:
  \begin{itemize}
  \item If $ b=2a $, then the initial value problem  has unique solution on $(0,\infty) $.
    \item If $ 2a < b $, the initial value problem is \textbf{overdetermined}. That is, while the initial data given on interval $[a,2a)$ is sufficient to calculate $y$ on $(0,\infty)$,  the extra data given on $[2a,b)$ create an inconsistency, unless for the exceptional case where the calculated value of $y$ on $[2a,b)$ coincides with the given data on $[2a,b)$.
    \item If $ b < 2a  $ then the initial value problem is \textbf{underdetermined}. That is  the initial data $y_0$ defined on $[a,b)$ is not enough to guarantee the unique solution of the functional equation on $(0,\infty) $. For different assignments of additional data on $[b,2a)$ yields a different solution on $(0,\infty) $. The maximum interval of existence is:
         $$I_{\text{max}} = \bigcup_{n=-\infty}^{\infty} [2^na,2^nb) \subsetneqq (0,\infty). $$
        \end{itemize}
\end{theorem}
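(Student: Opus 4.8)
The plan is to exploit the observation, already implicit in the introduction, that the logarithmic substitution $t = \log_2 x$ conjugates the scaling map $x \mapsto 2x$ on $(0,\infty)$ to the unit shift $t \mapsto t+1$ on $\mathbb{R}$. Writing $Y(t) := y(2^t)$, the functional equation (\ref{eq:2x1x}) becomes precisely the $1$-periodic equation (\ref{eq:1periodicequation}), namely $Y(t) = Y(t+1)$. Under this change of variables the interval $[a,b)$ is carried to $[\log_2 a, \log_2 b)$, whose length is $\log_2(b/a)$, and the three cases $b = 2a$, $b > 2a$, $b < 2a$ correspond exactly to this length being equal to, greater than, or less than $1$. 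This is the same trichotomy (unique solution, redundant/inconsistent data, insufficient data) described for the $1$-periodic problem in the introduction. I would first record this correspondence, and then argue each case directly in terms of the dyadic orbits $\{2^n x : n \in \mathbb{Z}\}$, on which every solution must be constant, so as to keep the argument self-contained.

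For the case $b = 2a$, the interval $[a,2a)$ is a fundamental domain for the orbit relation, and I would simply invoke the earlier theorem (with $\epsilon = a$) that extends data on $[\epsilon,2\epsilon)$ uniquely to all of $(0,\infty)$. For the overdetermined case $2a < b$, the point is that the restriction of $y_0$ to $[a,2a) \subset [a,b)$ already forces a unique candidate $\tilde y$ on $(0,\infty)$ by that same theorem. For each $x \in [2a,b)$ there is a unique integer $k \geq 1$ with $x/2^k \in [a,2a)$, giving the forced value $\tilde y(x) = f(x/2^k)$; consistency with the prescribed data then demands $f(x) = f(x/2^k)$ for all such $x$. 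Since the values of $f$ on $[2a,b)$ are supplied independently of those on $[a,2a)$, this identity fails for a generic initial function, which is exactly the stated inconsistency (with equality holding only in the exceptional coincidence).

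For the underdetermined case $b < 2a$, I would first show the shifted intervals $[2^n a, 2^n b)$, $n \in \mathbb{Z}$, are pairwise disjoint: since $b < 2a$ one has $2^n b < 2^{n+1} a$, so consecutive intervals are separated by the nonempty gap $[2^n b, 2^{n+1} a)$. Hence $I_{\max} = \bigcup_{n} [2^n a, 2^n b)$ is a proper subset of $(0,\infty)$, each $x \in I_{\max}$ has a unique representative $x/2^n \in [a,b)$, and $y(x) = f(x/2^n)$ is the only admissible value on $I_{\max}$. The complement $(0,\infty)\setminus I_{\max}$ is itself invariant under $x \mapsto 2x$, so $y$ may be prescribed freely on a fundamental domain of this complement; distinct prescriptions yield distinct global solutions, establishing that the data on $[a,b)$ is insufficient and that $I_{\max}$ is the maximal set of determination.

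The computations in each case are short length or inequality comparisons once the conjugation is in place, so I do not expect a serious obstacle. The only point requiring care is the bookkeeping for the dyadic intervals in the last case, specifically verifying both that the gaps are nonempty (giving genuine freedom) and that the complement is again orbit-invariant (so that free assignments actually produce valid solutions). This is where I would spend the most attention, since it is what rigorously distinguishes underdetermination from inconsistency.
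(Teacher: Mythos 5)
Your proposal is correct, but there is nothing in the paper to compare it against line by line: the paper states this theorem bare, with no proof at all, immediately after the theorem (proved via the dyadic decompositions (\ref{eq:2epsilontoinfinity}) and (\ref{eq:0toepsilon})) that treats initial data on $[\epsilon,2\epsilon)$. Your argument therefore supplies what the paper omits, and it does so with the paper's own tools. The case $b=2a$ is, as you say, just that earlier theorem with $\epsilon=a$. Your conjugation $Y(t)=y(2^t)$, which turns (\ref{eq:2x1x}) into the $1$-periodic equation (\ref{eq:1periodicequation}), is not used in this section of the paper, but it parallels both the introduction's trichotomy for the $1$-periodic IVP and the paper's remark that solutions have the closed form $y(x)=\mu(\ln x)$ with $\mu$ a $(\ln 2)$-periodic function; it gives a clean conceptual reason why everything is governed by comparing $\log_2(b/a)$ with $1$. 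The genuinely new content is in your second and third cases, which the paper merely asserts: the forced values $\tilde y(x)=f(x/2^k)$ on $[2a,b)$ exhibiting the overdetermination, and, for $b<2a$, the verification that consecutive intervals $[2^na,2^nb)$ are separated by nonempty gaps $[2^nb,2^{n+1}a)$, that every point of $I_{\text{max}}$ has a unique representative in $[a,b)$, and that the complement $(0,\infty)\setminus I_{\text{max}}$ is invariant under $x\mapsto 2x$. That last invariance check is the crux you correctly flag: it is what shows that free assignments on $[b,2a)$ extend to honest solutions of the functional equation, so the data deficit produces underdetermination rather than inconsistency, and it is precisely the step a reader could not reconstruct from the paper's statement alone.
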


\subsection{The functional equation $y(x)=y(-2x)$ }

In this subsection, we consider the functional equation:
\begin{equation}\label{eq:xminus2x}
  y(x)=y(-2x).
\end{equation}
For the functional equation (\ref{eq:xminus2x}), we draw on the ideas from the functional equation(\ref{eq:2x1x}) that we have previously discussed. The main difference is that, for (\ref{eq:xminus2x}) the initial set $I_0 $ should be considered differently .

%for the functional equation (\ref{eq:2x1x}),  given an initial value $y_0$ defined on an interval of the form $ [\epsilon, 2\epsilon)$ there exists a unique solution of the functional equation on $(0,\infty)$ which is the unique extension of $ y_0 $. which serves as the unique extension $\tilde{y}_0$. Similarly, for the initial value $ \tilde{y}_0 $ defined on an interval of the form $ ( -2 \delta , -\delta ]$, there is a unique of the functional equation on $(-\infty, 0]$, which is the unique extension of $\tilde{y}_0$. For the case of (\ref{eq:xminus2x})

\begin{theorem}
  Let $\epsilon > 0 $.  The functional equation (\ref{eq:xminus2x}) with initial function $y_0$ defined on the set of the form $ I_0:= (-2 \epsilon, -\epsilon]  \cup [\epsilon, 2\epsilon)  $ has a unique solution on $\mathbb{ R} \setminus \{0\}$ which is the unique extension of $y_0 $.
\end{theorem}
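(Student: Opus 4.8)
The plan is to read the equation (\ref{eq:xminus2x}) as the statement that $y$ is constant along the orbits of the map $T(x)=-2x$ acting on $\mathbb{R}\setminus\{0\}$, and then to verify that the prescribed set $I_0$ is a \emph{fundamental domain} for this action, i.e.\ it meets every orbit in exactly one point. Once this is established, existence and uniqueness follow by the same ``unique orbit representative'' argument already used for $y(x)=y(2x)$ in the preceding subsection; the only genuinely new feature is the sign oscillation induced by the factor $-2$, which is precisely what the two-sided form of $I_0$ is designed to absorb.

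First I would record both directions of the equation. The substitution $x\mapsto -x/2$ turns $y(x)=y(-2x)$ into the equivalent backward form $y(x)=y(-x/2)$, so $T(x)=-2x$ is invertible with $T^{-1}(x)=-x/2$, and iterating gives $y(x)=y\big((-2)^n x\big)$ for every $n\in\mathbb{Z}$. Thus the orbit of any $x\neq 0$ is $\{(-2)^n x : n\in\mathbb{Z}\}$, whose members have magnitudes $\lvert(-2)^n x\rvert = 2^n\lvert x\rvert$ and signs alternating with the parity of $n$.

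The key step is to rewrite $I_0=(-2\epsilon,-\epsilon]\cup[\epsilon,2\epsilon)$ as $I_0=\{z\in\mathbb{R}:\epsilon\le \lvert z\rvert<2\epsilon\}$, so that $I_0$ contains \emph{both} signs of every magnitude in $[\epsilon,2\epsilon)$. Since the intervals $[2^{-n}\epsilon,\,2^{-n+1}\epsilon)$, $n\in\mathbb{Z}$, partition $(0,\infty)$ exactly as in (\ref{eq:2epsilontoinfinity}) and (\ref{eq:0toepsilon}), there is a unique $n=n(x)$ with $2^{n}\lvert x\rvert\in[\epsilon,2\epsilon)$; for that $n$ the point $(-2)^{n}x$ lands in $I_0$ whatever its sign, and no other power can, because magnitudes along the orbit differ by factors of at least $2$ while points of $[\epsilon,2\epsilon)$ have ratios strictly less than $2$. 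This shows each orbit meets $I_0$ in exactly one point. I would stress here that it is exactly the presence of both $(-2\epsilon,-\epsilon]$ and $[\epsilon,2\epsilon)$ that makes the argument go through: because the orbit alternates in sign, the representative of magnitude in $[\epsilon,2\epsilon)$ may be positive or negative depending on parity, so both signs must be available. This sign coupling is the one point distinguishing the present case from $y(x)=y(2x)$, where the two half-lines decouple into independent problems.

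To conclude, I would define $y(x):=y_0\big((-2)^{n(x)}x\big)$ for $x\neq 0$. On $I_0$ one has $n(x)=0$, so $y$ restricts to $y_0$; and verifying the equation reduces to the identity $n(-2x)=n(x)-1$, immediate from uniqueness of the representative, whence $y(-2x)=y_0\big((-2)^{n(x)}x\big)=y(x)$. Uniqueness follows because any solution $\tilde y$ satisfies $\tilde y(x)=\tilde y\big((-2)^{n(x)}x\big)=y_0\big((-2)^{n(x)}x\big)$, forcing $\tilde y=y$. I do not expect a serious obstacle; the only point requiring care is the bookkeeping of signs, namely confirming that the single representative furnished by the magnitude partition actually lies in $I_0$ for either parity, which is exactly what the two-sided description of $I_0$ guarantees.
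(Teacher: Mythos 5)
Your proposal is correct, and its core mechanism is the one the paper uses: along an orbit of $x \mapsto -2x$ the magnitudes are $2^n|x|$, the dyadic intervals of the form $[2^n\epsilon,2^{n+1}\epsilon)$, $n\in\mathbb{Z}$, partition $(0,\infty)$, and $I_0$ contains \emph{both} signs of every magnitude in $[\epsilon,2\epsilon)$, so each orbit meets $I_0$ exactly once. The difference is organizational but worth noting. The paper runs two separate one-sided iterations with exponents in $\mathbb{N}$: the outward form $y(x)=y\bigl((-2)^n x\bigr)$ to handle $0<|x|<\epsilon$ and the inward form $y(x)=y\bigl((-1/2)^m x\bigr)$ to handle $|x|\ge 2\epsilon$, assembling the solution piecewise over three regions. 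You instead let the exponent range over $\mathbb{Z}$ and phrase $I_0$ as a fundamental domain for the action generated by $T(x)=-2x$, which collapses the case split into the single formula $y(x)=y_0\bigl((-2)^{n(x)}x\bigr)$. Your route also supplies two verifications the paper leaves implicit: that the constructed function actually satisfies the functional equation (via the identity $n(-2x)=n(x)-1$), and that any solution extending $y_0$ must coincide with it (by evaluating at the orbit representative). What the paper's formulation buys in exchange is consistency with the iterative extend-outward/extend-inward style used for the other equations in the paper, making explicit which rearrangement of the equation is applied on which region. Both arguments are complete and establish the statement.
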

\begin{proof}
  We can write the functional equation (\ref{eq:xminus2x}) as
  \begin{equation}\label{eq:outward}
    y(x)=y((-2)^n x), n  \in \mathbb{N}.
  \end{equation}
  We use the formulation (\ref{eq:outward}) to extend the initial function $y_0$ to a newer set $(-\epsilon, \epsilon)\setminus \{0\}$. Let $x \neq 0, x \in (-\epsilon,  \epsilon) $. Then $|x| \in (0, \epsilon)$. By the condition given in (\ref{eq:0toepsilon}),  there exists a unique $n \in \mathbb{N} $ such that $ 2^n |x| \in  [\epsilon,  2\epsilon) $. Consequently, $ (-2)^n x \in (-2 \epsilon, -\epsilon]  \cup [\epsilon, 2\epsilon)  $, and by (\ref{eq:outward})
  $$  y(x) = y((-2)^n x)= y_0((-2)^n x) $$.
 We can also write the functional equation (\ref{eq:xminus2x}) as
  \begin{equation}\label{eq:inward}
    y(x)= y(\left(\frac{-1}{2}\right)^n x ), n  \in \mathbb{N}
  \end{equation}
  We use the formulation (\ref{eq:inward}) to extend the initial function $y_0$ to a newer set$(-\infty, -2\epsilon] \cup [2\epsilon, \infty )$.

 Let  $x \in  (-\infty, -2\epsilon]\cup [2\epsilon, \infty) $. Then $|x| \in [2\epsilon, \infty)  $. By (\ref{eq:2epsilontoinfinity})   there exists a unique $ m \in \mathbb{N }$ such that $\left(\frac{1}{2}\right)^m |x | \in  (\epsilon,  2\epsilon) $. Consequently, $ (-\frac{1}{2})^m x \in (-2 \epsilon, -\epsilon]  \cup [\epsilon, 2\epsilon)  $. By (\ref{eq:inward}), we get
  $$y(x) = y(\left(-\frac{1}{2}\right)^m x)= y_0(\left(-\frac{1}{2}\right)^m x) .$$

  Summarizing these results we write as:
  $$ y(x)= \begin{cases}
              &  y_0(x), \mbox{ if } x \in (-2\epsilon,  -\epsilon]\cup [\epsilon, 2\epsilon), \\
              &   y_0((-2)^n x), \mbox{  if }  x \in (-\epsilon, \epsilon)\setminus \{0\}, \\
              &    y_0(\left(-\frac{1}{2}\right)^m x),     \mbox{ if }    x\in (-\infty, -2\epsilon)\cup (2\epsilon, \infty).
           \end{cases}  $$
  This  completes the proof of the theorem.
\end{proof}

\begin{remark}
  Note that if the functional equation (\ref{eq:xminus2x}) is given with initial function defined on an interval of the form $[\epsilon, 2\epsilon)$  is helpful to  extend the initial function to collection of intervals of the form
  $$    \{  [2^{2n}\epsilon, 2^{2n+1}\epsilon     )\}, ( -2^{2n}\epsilon , 2^{2n-1}\epsilon,    ] , n \in \mathbb{Z }\}  $$
  where as for  initial function defined on an interval of the form $(-2\epsilon, -\epsilon]$ we can extend the initial function to the collection of intervals of the form:

  $$    \{  (-2^{2n+1}\epsilon, -2^{2n}\epsilon     ]\}, ( 2^{2n-1}\epsilon , 2^{2n}\epsilon,    ] , n \in \mathbb{Z }\}  $$
  The union of these two collections yield $\mathbb{R} \setminus \{0\}$. If we
\end{remark}

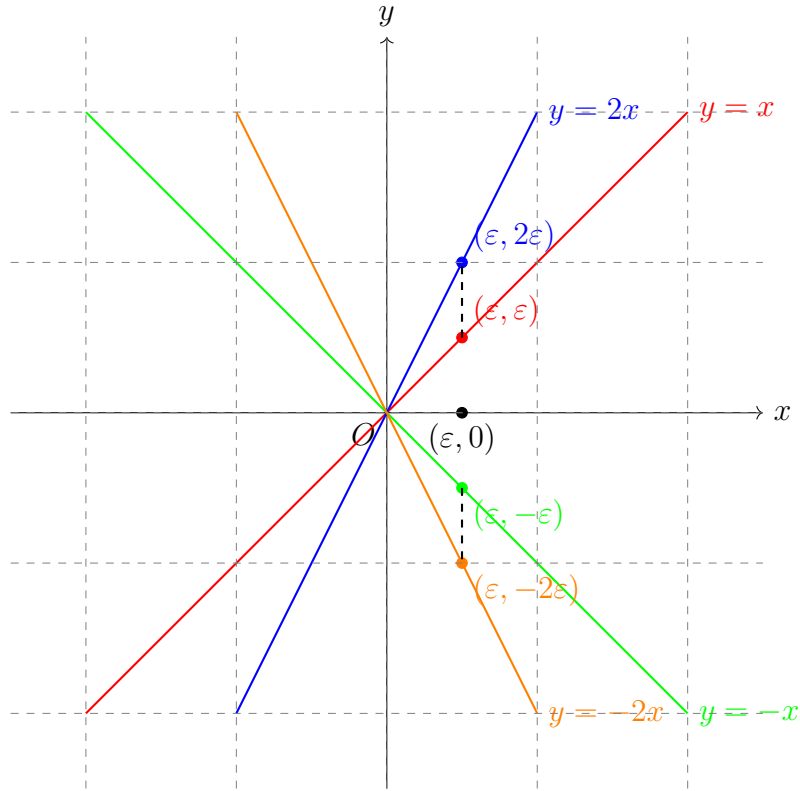
\begin{figure}[H]
\centering
\begin{tikzpicture}[scale=2]

% Coordinate system
\draw[->] (-2.5,0) -- (2.5,0) node[right] {$x$};
\draw[->] (0,-2.5) -- (0,2.5) node[above] {$y$};

% Origin
\node at (0,0) [below left] {$O$};

% Lines
\draw[red, thick] (-2,-2) -- (2,2) node[right] {$y=x$};
\draw[blue, thick] (-1,-2) -- (1,2) node[right] {$y=2x$};
\draw[green, thick] (-2,2) -- (2,-2) node[right] {$y=-x$};
\draw[orange, thick] (-1,2) -- (1,-2) node[right] {$y=-2x$};

% Epsilon point on x-axis
\coordinate (eps) at (0.5,0);
\filldraw (eps) circle (1pt) node[below] {$(\varepsilon,0)$};

% Points on lines
\coordinate (p1) at (0.5,0.5); % y=x
\coordinate (p2) at (0.5,1);   % y=2x
\coordinate (p3) at (0.5,-0.5); % y=-x
\coordinate (p4) at (0.5,-1);   % y=-2x

% Mark points
\filldraw[red] (p1) circle (1pt) node[above right] {$(\varepsilon,\varepsilon)$};
\filldraw[blue] (p2) circle (1pt) node[above right] {$(\varepsilon,2\varepsilon)$};
\filldraw[green] (p3) circle (1pt) node[below right] {$(\varepsilon,-\varepsilon)$};
\filldraw[orange] (p4) circle (1pt) node[below right] {$(\varepsilon,-2\varepsilon)$};

% Vertical segments
\draw[thick, dashed] (p1) -- (p2);
\draw[thick, dashed] (p3) -- (p4);

% Grid (optional, for better visualization)
\draw[gray, very thin, dashed] (-2.5,-2.5) grid (2.5,2.5);

\end{tikzpicture}
\caption{Nature of initial set $I_0$. The figure shows the vertical segments connecting points at $x = \varepsilon$ on the lines $y = x$ to $y = 2x$ (above the x-axis) and $y = -x$ to $y = -2x$ (below the x-axis), illustrating the structure of the initial set.}
\label{fig:initial_set}
\end{figure}

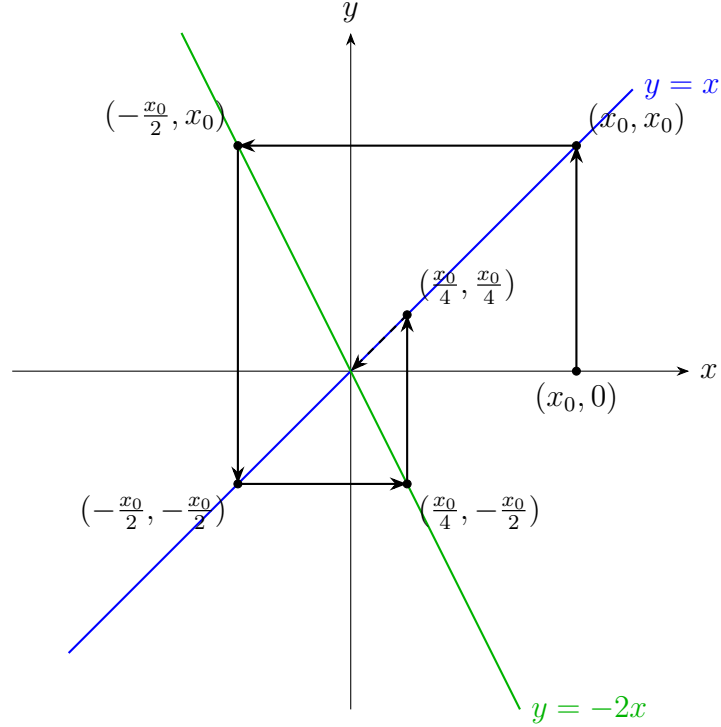
\begin{figure}[htbp]
\centering
\begin{tikzpicture}[>=Stealth, scale=1.5]
    % Axes
    \draw[->] (-3,0) -- (3,0) node[right] {$x$};
    \draw[->] (0,-3) -- (0,3) node[above] {$y$};

    % Define symbolic coordinates
    \coordinate (X0) at (2,0);  % (x0,0)
    \coordinate (X0X0) at (2,2); % (x0,x0)
    \coordinate (NegX02X0) at (-1,2); % (-x0/2,x0)
    \coordinate (NegX02NegX02) at (-1,-1); % (-x0/2,-x0/2)
    \coordinate (X04NegX02) at (0.5,-1); % (x0/4,-x0/2)
    \coordinate (X04X04) at (0.5,0.5); % (x0/4,x0/4)

    % Lines (keeping only y=x and y=-2x)
    \draw[blue, thick] (-2.5,-2.5) -- (2.5,2.5) node[right] {$y=x$};
    \draw[green!70!black, thick] (-1.5,3) -- (1.5,-3) node[right] {$y=-2x$};

    % Points and curve with symbolic labels
    \filldraw (X0) circle (1pt) node[below] {$(x_0,0)$};
    \filldraw (X0X0) circle (1pt) node[above right] {$(x_0,x_0)$};
    \draw[->, thick] (X0) -- (X0X0);
    \filldraw (NegX02X0) circle (1pt) node[above left] {$(-\frac{x_0}{2},x_0)$};
    \draw[->, thick] (X0X0) -- (NegX02X0);
    \filldraw (NegX02NegX02) circle (1pt) node[below left] {$(-\frac{x_0}{2},-\frac{x_0}{2})$};
    \draw[->, thick] (NegX02X0) -- (NegX02NegX02);
    \filldraw (X04NegX02) circle (1pt) node[below right] {$(\frac{x_0}{4},-\frac{x_0}{2})$};
    \draw[->, thick] (NegX02NegX02) -- (X04NegX02);
    \filldraw (X04X04) circle (1pt) node[above right] {$(\frac{x_0}{4},\frac{x_0}{4})$};
    \draw[->, thick] (X04NegX02) -- (X04X04);

    % Dotted line to show continuation
    \draw[dashed, ->, thick] (X04X04) -- (0,0);
\end{tikzpicture}
\caption{Counterclockwise (follow the arrow) sense $y(x)=y(-2x)$ showing spiraling inward to the limit point. The clockwise sense(reverse the arrow) is spiraling outward}
\end{figure}

\section{The functional equations of all even functions and all odd functions}

\subsection{The functional equation of all even function }

\begin{theorem}
Consider a  set $I=(-a,a), a> 0 $,  or  $ I = \mathbb{R}$ when $a= \infty $ which are symmetric about the point $0$.
Let $\mathcal{C} $ is a family of all subsets $S$  of $I$ satisfying the following conditions:

  \begin{itemize}
    \item For each  $ S \in \mathcal{C},  0 \in S $
    \item For each  $S \in \mathcal{C} $, for each   $x \in I $  either $x \in S $ or $-x \in S $.
  \end{itemize}
  Then  for any arbitrary function $y^0$  defined on some given set  $S \in \mathcal{C} $, the initial value problem for the functional equation of all even functions
  \begin{equation}\label{eq:functionalequforeven}
   y(x)= y(-x),\quad  x \in I ,\quad  y(x)=y^0(x), x \in S
  \end{equation}
  has a solution that is uniquely defined on the whole of $ I $.
\end{theorem}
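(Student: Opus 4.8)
The plan is to recognize that the functional equation $y(x) = y(-x)$ is nothing but the assertion that $y$ is even on $I$, so the problem reduces to showing that an arbitrary datum $y^0$ prescribed on $S$ admits exactly one even extension to $I$. I would split the argument into a uniqueness half and an existence half, both driven by the covering hypothesis that for every $x \in I$ at least one of $x$, $-x$ lies in $S$, together with the fact that $0 \in S$ handles the single fixed point of the reflection $x \mapsto -x$.

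For \emph{uniqueness}, I would suppose $y_1$ and $y_2$ are two even functions on $I$, each restricting to $y^0$ on $S$, and fix an arbitrary $x \in I$. If $x \in S$, then $y_1(x) = y^0(x) = y_2(x)$ directly. Otherwise the covering condition forces $-x \in S$, and evenness gives $y_i(x) = y_i(-x) = y^0(-x)$ for $i = 1, 2$, so again $y_1(x) = y_2(x)$. Since $x$ was arbitrary, $y_1 \equiv y_2$ on $I$, and at most one solution exists.

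For \emph{existence}, I would define the candidate extension by reflection: set $y(x) = y^0(x)$ when $x \in S$, and $y(x) = y^0(-x)$ when $x \in I \setminus S$, where the covering condition guarantees $-x \in S$ in the second case, so the formula is meaningful. I would then verify the functional equation $y(x) = y(-x)$ by cases according to which of $x$ and $-x$ belong to $S$, and check that $y$ restricts to $y^0$ on $S$. Off the symmetric overlap the verification is immediate: if exactly one of the antipodal pair lies in $S$, both $y(x)$ and $y(-x)$ evaluate through that single representative and automatically agree.

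The main obstacle is \emph{well-definedness and consistency on the symmetric overlap} $S \cap (-S)$. When both $x \in S$ and $-x \in S$, the reflection prescription assigns $y(x) = y^0(x)$ and $y(-x) = y^0(-x)$, and evenness of the extension then demands $y^0(x) = y^0(-x)$, which is not automatic for a genuinely arbitrary datum. I would resolve this by reading the sets $S \in \mathcal{C}$ as fundamental domains for the reflection — each antipodal pair represented, and (apart from $0$) represented by a single point — so that the two prescriptions never collide and the constructed $y$ is unambiguously even; under this reading the covering property is exactly the right hypothesis, simultaneously making the even reflection defined (existence) and forced (uniqueness). Pinning down this fundamental-domain interpretation of $\mathcal{C}$, and thereby ruling out the overdetermined configurations where $y^0$ takes incompatible values on a pair $\{x, -x\}$, is the step that requires the most care.
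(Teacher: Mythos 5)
Your construction is exactly the paper's: define the extension by reflection, $y(x)=y^0(x)$ for $x\in S$ and $y(x)=y^0(-x)$ for $x\in I\setminus S$. In fact the paper's entire proof is this two-case formula and nothing else — it contains no explicit uniqueness argument and, more significantly, never mentions the issue you single out as the main obstacle. Your discussion of the overlap $S\cap(-S)$ is precisely the right concern: under an inclusive reading of ``either $x\in S$ or $-x\in S$,'' the set $S=I$ itself belongs to $\mathcal{C}$, and then an arbitrary (non-even) $y^0$ on $S$ admits no solution whatsoever, so the theorem as literally stated would be false. The statement and the paper's proof only work under the fundamental-domain reading you adopt — each pair $\{x,-x\}$ with $x\neq 0$ represented by exactly one point of $S$, with $0\in S$ covering the fixed point of the reflection — and the paper assumes this silently, whereas you make it explicit and explain why it is forced. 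So the two arguments coincide in substance, but your version supplies the uniqueness half and the well-definedness analysis that the paper's one-line proof omits; if anything, the statement of the theorem itself should be amended to say ``exactly one of $x,-x$ lies in $S$ for $x\neq 0$'' so that ``arbitrary $y^0$'' is honest.
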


\begin{proof}
  Let $x  \notin S  $. Then  $ -x  \in S $. Then define $y(x)= y^0(-x), x \notin S $. Therefore  the solution is given on $I$ as
  $$ y(x)= \begin{cases}
              & y^0(x),    \mbox{  if }  x \in S,  \\
              &  y^0 (-x),  \mbox{  if } x \in I \setminus S.
           \end{cases} $$
  \end{proof}

\subsection{The functional equation of all odd function }
For odd function $f$ it is known that $f(0)=0 $, we consider the point other than $0$.

\begin{theorem}
Consider a symmetric set $I=(-a,a), a> 0 $, or  $ I = \mathbb{R}$ when $a= \infty $.
Let $\mathcal{C} $ is a family of subsets $S$  of $I$ satisfying the following conditions.

  \begin{itemize}
  \item For each  $S \in \mathcal{C} $, for each   $x \in I, x \neq 0  $  either $x \in S $ or $-x \in S $.
  \end{itemize}
  Then  for any arbitrary function $y^0$  defined on some given set  $S \in \mathcal{C} $, the initial value problem for the functional equation of all odd functions
  \begin{equation}\label{eq:functionalequforodd}
   y(-x)= -y(x),\quad  x \in I ,\quad  y(x)=y^0(x), x \in S
  \end{equation}
  has a solution that is uniquely defined on the whole of $ I $.
\end{theorem}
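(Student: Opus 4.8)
The plan is to mirror the construction used in the preceding even-function theorem, with one extra ingredient forced by the fixed point of the reflection $x \mapsto -x$. First I would dispose of the point $x=0$. Setting $x=0$ in \eqref{eq:functionalequforodd} gives $y(0)=-y(0)$, hence $y(0)=0$ is forced for every solution; I therefore simply declare $y(0)=0$. Note that the defining condition on $\mathcal{C}$ imposes the reflection dichotomy only for $x\neq 0$, so this is consistent with the hypotheses (and if $0\in S$, the given datum $y^0(0)$ must already equal $0$).

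Next, for each $x\in I$ with $x\neq 0$, I would use the dichotomy: either $x\in S$ or $-x\in S$. When $x\in S$, set $y(x)=y^0(x)$. When $x\notin S$, the condition guarantees $-x\in S$, and the oddness requirement forces $y(x)=-y(-x)=-y^0(-x)$, so I define $y(x)=-y^0(-x)$. This yields the explicit formula
\begin{equation*}
y(x)=\begin{cases}
0, & x=0,\\
y^0(x), & x\in S,\ x\neq 0,\\
-y^0(-x), & x\in I\setminus S,\ x\neq 0.
\end{cases}
\end{equation*}
I would then verify that this $y$ is the unique solution. Uniqueness is immediate: the value at each point is dictated either by the initial datum on $S$ or by the functional equation together with the datum at the reflected point, leaving no freedom. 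For existence I would check $y(-x)=-y(x)$ in each case by direct substitution, using that the reflection carries $S\setminus\{0\}$ into $I\setminus S$ and back (after exchanging the roles of $x$ and $-x$), so the identity holds automatically off the fixed point.

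The main obstacle is the consistency of the data when both $x$ and $-x$ lie in $S$ for some $x\neq 0$: the inclusive disjunction in the hypothesis permits this overlap, and there the two prescriptions $y(x)=y^0(x)$ and $y(x)=-y^0(-x)$ agree only if $y^0$ is already odd on that overlap. I would address this by observing that such overlap produces redundant initial data, entirely analogous to the over-determined situations discussed earlier in the paper, and that a non-redundant initial set is obtained by selecting, for each antipodal pair $\{x,-x\}$, exactly one representative in $S$; under this normalization the construction is automatically consistent. The remaining work consists only of the routine case checks indicated above, which I would carry out to confirm that the displayed $y$ satisfies \eqref{eq:functionalequforodd} throughout $I$.
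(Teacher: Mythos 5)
Your construction is essentially identical to the paper's: define $y(x)=y^0(x)$ for $x\in S$, $y(x)=-y^0(-x)$ for $x\in I\setminus S$, and $y(0)=0$, with uniqueness immediate from the reflection dichotomy. In fact you are more careful than the paper's own proof, which silently ignores the consistency issue you flag — namely that the inclusive ``either $x\in S$ or $-x\in S$'' allows both $x$ and $-x$ to lie in $S$ (where arbitrary data violate $y(-x)=-y(x)$ unless $y^0$ is odd on the overlap), and that $0\in S$ with $y^0(0)\neq 0$ is likewise inconsistent; your normalization to one representative per antipodal pair is the right repair.
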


\begin{proof}
  Let $x  \notin S  $. Then  $ -x  \in S $. Then define $y(x)= -y^0(-x), x \notin S $. Therefore  the solution is given on $I$ as
  $$ y(x)= \begin{cases}
              & y^0(x),    \mbox{  if }  x \in S,  \\
              &  -y^0 (-x),  \mbox{  if } -x \in S ,\\
              & 0, \mbox{if } x=0.
           \end{cases} $$
  \end{proof}

\section{The functional equation $ y(3x)=y(x) + y(2x)$}

In this section we set the  possible set on which the initial data is defined so that the initial value problem the functional equation
\begin{equation}\label{eq:x2x3x}
  y(3x)=y(x)+y(2x),
\end{equation}
exists and is unique.
\begin{definition}
Equation (\ref{eq:x2x3x}) can be rearranged into the form
\begin{equation}\label{eq:combinatorialform}
   y(x)  = y\left(\frac{1}{3}x \right)+y\left(\frac{2}{3}x \right)= \left( S^{\frac{1}{3}}+S^{\frac{2}{3}}\right)y(x).
\end{equation}
 We call the equation (\ref{eq:combinatorialform}) the \emph{ iteration formula for forward extension}..
\end{definition}

\begin{definition}
  Equation (\ref{eq:x2x3x}) can be rearranged into the form
\begin{equation}\label{eq:forwardx2x3x}
     y(x)= y(3x)-y(2x)= (S^3-s^2)y(x).
 \end{equation}
Equation (\ref{eq:forwardx2x3x}) is called \emph{ iteration formula for backward extension}.
\end{definition}

\begin{theorem}
Let $\epsilon$  be any positive number. The functional equation (\ref{eq:x2x3x}), with the initial data  $y(x)=y_0(x) $ defined on the interval $I_0= [\epsilon, 3\epsilon)$, has a unique solution on the interval $[0,\infty)$.
\end{theorem}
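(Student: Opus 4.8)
The plan is to build the solution by two separate recursions emanating from the initial interval $I_0=[\epsilon,3\epsilon)$, one pushing outward onto $[3\epsilon,\infty)$ and one pushing inward onto $(0,\epsilon)$, and then to record the forced endpoint value $y(0)=0$. First I would observe that setting $x=0$ in $y(3x)=y(x)+y(2x)$ gives $y(0)=2y(0)$, so necessarily $y(0)=0$; this pins down the left endpoint. Next I would use the fact that $(0,\infty)=\bigsqcup_{n\in\mathbb{Z}}[3^{n}\epsilon,3^{n+1}\epsilon)$, so that $I_0$ is a fundamental domain for multiplication by $3$, and split the construction into the outer region $[3\epsilon,\infty)$ and the inner region $(0,\epsilon)$.

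On the outer region I would define $y$ by the forward-extension formula $y(x)=y\!\left(\tfrac{1}{3}x\right)+y\!\left(\tfrac{2}{3}x\right)$ from (\ref{eq:combinatorialform}), treated as a recursion. The key point is that both arguments $\tfrac{1}{3}x$ and $\tfrac{2}{3}x$ are strictly smaller than $x$ by a factor at most $\tfrac{2}{3}$, and that if $x\ge 3\epsilon$ then $\tfrac{1}{3}x\ge\epsilon$ and $\tfrac{2}{3}x\ge 2\epsilon$, so no argument ever drops below $\epsilon$. I would then argue that the binary expansion tree generated by repeatedly applying the formula has every branch of length at most $\log_{3/2}\!\big(x/(3\epsilon)\big)+1$, since each node shrinks the argument by a factor $\le\tfrac{2}{3}$; hence the tree is finite and each leaf lands in $I_0$. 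This expresses $y(x)$ as a finite integer combination of values of $y_0$, which simultaneously establishes existence and forces uniqueness on $[3\epsilon,\infty)$.

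Symmetrically, on the inner region I would define $y$ by the backward-extension formula $y(x)=y(3x)-y(2x)$ from (\ref{eq:forwardx2x3x}). Now the two arguments $2x$ and $3x$ are strictly larger than $x$ by a factor at least $2$, and if $0<x<\epsilon$ then $2x<2\epsilon<3\epsilon$ and $3x<3\epsilon$, so no argument ever exceeds $3\epsilon$; by the expansion factor $\ge 2$ every branch terminates in $I_0$ after finitely many steps, and the same finiteness argument yields a unique value $y(x)$ for each $x\in(0,\epsilon)$. Finally I would verify the functional equation globally: for $x\ge\epsilon$ the definition of $y$ at the point $3x\ge 3\epsilon$ is precisely $y(3x)=y(x)+y(2x)$; for $0<x<\epsilon$ the inner recursion is by construction the rearrangement $y(3x)=y(x)+y(2x)$; and at $x=0$ the identity reads $0=0$. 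Uniqueness follows because any solution must obey both recursions, which force all values from $y_0$.

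The main obstacle I anticipate is not the algebra but the bookkeeping that makes the two recursions well-founded: proving the two range-preservation facts (outer arguments never fall below $\epsilon$, inner arguments never rise above $3\epsilon$) and combining them with the contraction factor $\tfrac{2}{3}<1$ (respectively the expansion factor $2>1$) to guarantee that every branch of the binary expansion tree reaches $I_0$ in finitely many steps. Once this finiteness is established, $y(x)$ is a well-defined finite sum of initial values and the remainder is routine. A secondary point to treat with care is consistency at the seams $x=\epsilon$ and $x=3\epsilon$, namely checking that the outer definition at $3x$ and the given data at $x$ agree with the functional equation; the half-open normalization of $I_0=[\epsilon,3\epsilon)$ and of the triadic intervals is exactly what prevents any double-definition there.
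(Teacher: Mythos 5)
Your proposal is correct, and it proves existence and uniqueness by the same two rearrangements the paper uses (the forward formula $y(x)=y\left(\tfrac{1}{3}x\right)+y\left(\tfrac{2}{3}x\right)$ on $[3\epsilon,\infty)$ and the backward formula $y(x)=y(3x)-y(2x)$ on $(0,\epsilon)$), but the mechanism by which you make the recursion well-founded is genuinely different. The paper builds an explicit chain of pairwise disjoint intervals, $I_n=\left[\tfrac{3^n}{2^{n-1}}\epsilon,\tfrac{3^{n+1}}{2^n}\epsilon\right)$ covering $[\epsilon,\infty)$ and dyadic intervals $I_{-k}=[2^{-k}\epsilon,2^{1-k}\epsilon)$ covering $(0,\epsilon)$, and then defines interval functions $y_n$ by induction on the index, tracking which previously computed intervals the arguments $\tfrac{1}{3}x,\tfrac{2}{3}x$ (resp. $2x,3x$) fall into. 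You dispense with the interval bookkeeping entirely and instead run a per-point well-founded recursion: the two range-preservation facts (outer arguments never fall below $\epsilon$, inner arguments never exceed $3\epsilon$) together with the contraction factor $\tfrac{2}{3}$ (resp. expansion factor $2$) bound the depth of every branch of the expansion tree, so each value is a finite integer combination of values of $y_0$. Your route is leaner and makes uniqueness transparent (any solution must obey both recursions, hence agrees with the tree values), and it also supplies a detail the paper only asserts, namely that $y(0)=0$ is forced by $y(0)=2y(0)$; the paper's route buys an explicit description of the successive domains of dependence, which it reuses later when defining $I_{\text{max}}=\bigcup_k I_k$ and formulating the PENLP. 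One small caution about your closing remark: the seam consistency at $x=\epsilon$ and $x=3\epsilon$ is not really about the half-open normalization preventing double definition, but simply about the fact that the outer recursion at $z=3x\ge 3\epsilon$ is, by definition, the identity $y(3x)=y(x)+y(2x)$; your case analysis already covers this correctly.
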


\begin{proof}
 Consider the interval $I_1:= [3\epsilon, \frac{9}{2}\epsilon) $. Let $x\in I_1 $. Then $\frac{x}{3}  \in [\epsilon , \frac{3}{2}\epsilon ) \subset I_0  $, and $\frac{2x}{3}  \in [2\epsilon,  3 \epsilon ) \subset I_0  $. Let us denote the  calculated value of $y$ defined on $I_1$ by $y_1$. Consequently
   $$y_1(x)=y\left(\frac{x}{3}\right)+y\left(\frac{2x}{3}\right)= y_0\left(\frac{x}{3}\right)+y_0\left(\frac{2x}{3}\right) $$

    Next consider the interval $I_2:= [ \frac{9}{2}\epsilon, \frac{27}{4}\epsilon  ) $. Let $x\in I_2 $. Then $\frac{x}{3}  \in [ \frac{3}{2}\epsilon, \frac{9}{4}\epsilon,   ) \subset I_0  $, and $\frac{2x}{3}  \in [3\epsilon,  \frac{9}{2} \epsilon ) \subset I_1  $. Let us denote the  calculated value of $y$ defined on $I_2$ by $y_2$. Consequently
   $$y_2(x)=y\left(\frac{x}{3}\right)+ y\left(\frac{2x}{3}\right)= y_0\left(\frac{x}{3}\right)+ y_1\left(\frac{2x}{3}\right) $$
   We may continue in this manner, defining $y_n$, as the computed value of $y$ on the interval
   $$ I_n:= \left[\frac{3^n}{2^{n-1} }\epsilon ,   \frac{3^{n+1}}{2^{n} }\epsilon \right), n \in \mathbb{N}. $$
   Let $x \in I_{n+1} $. Then $\frac{2}{3}x \in I_n $, and  $\frac{1}{3}x \in \bigcup \limits_{k=i}^n I_k $. Thus    The calculated value $y_{n+1}$ is some linear combination of the preceding calculated values:
   $$ y_{n+1}(x)= \sum_{i=0}^{n}\alpha_i y_i(x) $$

    Let $I_{-1}:= [\frac{\epsilon}{2}, \epsilon )$. If $ x \in I_{-1} $ then $ 3x \in  [\frac{ 3 \epsilon}{2}, 3\epsilon )   \subset I_0 $, and $ 2x \in [\epsilon, 2\epsilon )\subset I_0  $. Consequently
   $$ y(x)=y(3x)-y(2x)=y_0(3x)-y_0(x)  . $$
   More generally let us denote
    $$ I_{-k}:= [ 2^{-k} \epsilon, 2^{1-k} \epsilon),\quad k \in \mathbb{N }. $$
    Suppose  that we have already calculated the values of $y$ on each of the intervals $ I_{-k}, \, k=1,2,.., n $. Then for $x \in I_{-(n+1)} $, we have $  2x \in   I_{-n} $, and $ 3x  \in   I_{-n} \cup I_{1-n} $. Accordingly
    $$ y_{-(n+1)}(x) = \begin{cases}
            & y_{-n}(2x)+ y_{-n}(3x),          \mbox{ if } 3x    \in  I_{-n}  \\
            &  y_{-n}(2x)+ y_{1-n}(3x),          \mbox{ if } 3x   \in I_{1-n}.
            \end{cases} $$
    Then we have $$ (0,\epsilon) = \bigcup_{k=1}^\infty I_{-k} $$
    Summarizing these results we get $y$ defined on the interval $[0,\infty)$ as
    $$ y(x) = \sum_{-\infty}^{\infty}y_k(x)\chi_{I_k}(x), \quad y(0)=0 $$
    For the construction of intervals $I_k$, refer to Figure 7.
\end{proof}

\begin{figure}[H]
\centering
\begin{tikzpicture}[scale=2, >=stealth]
    % Draw coordinate axes with extended range
    \draw[->] (0,0) -- (3.5,0) node[right] {$x$};
    \draw[->] (0,0) -- (0,5.5) node[above] {$y$};

    % Draw the lines y=x, y=2x, y=3x (extended)
    \draw[blue, thick] (0,0) -- (3,3) node[above right] {$y=x$};
    \draw[red, thick] (0,0) -- (2.5,5) node[above right] {$y=2x$};
    \draw[green!70!black, thick] (0,0) -- (1.8,5.4) node[above right] {$y=3x$};

    % Define the points with coordinates in terms of epsilon (scaled for visualization)
    \def\visepsilon{1} % visualization scale
    \coordinate (P1) at ({\visepsilon/4}, {\visepsilon/2});     % on y=2x
    \coordinate (P2) at ({\visepsilon/2}, {\visepsilon/2});     % on y=x
    \coordinate (P3) at ({\visepsilon/2}, {\visepsilon});       % on y=2x
    \coordinate (P4) at ({\visepsilon}, {\visepsilon});         % on y=x
    \coordinate (P5) at ({\visepsilon}, {3*\visepsilon});       % on y=3x
    \coordinate (P6) at ({1.5*\visepsilon}, {3*\visepsilon});   % on y=2x
    \coordinate (P7) at ({1.5*\visepsilon}, {4.5*\visepsilon}); % on y=3x
    \coordinate (P8) at ({2.25*\visepsilon}, {4.5*\visepsilon}); % on y=x

    % Reference point on x-axis
    \coordinate (Ref) at ({\visepsilon}, 0);

    % Draw the connecting segments
    \draw[very thick, magenta] (P1) -- (P2);
    \draw[very thick, magenta] (P2) -- (P3);
    \draw[very thick, magenta] (P3) -- (P4);
    \draw[very thick, magenta] (P4) -- (P5);
    \draw[very thick, magenta] (P5) -- (P6);
    \draw[very thick, magenta] (P6) -- (P7);
    \draw[very thick, magenta] (P7) -- (P8);

    % Mark and label all points with their coordinates using Greek letters
    \foreach \point/\position/\coord in {
        P1/above left/{$(\frac{\epsilon}{4}, \frac{\epsilon}{2})$},
        P2/below left/{$(\frac{\epsilon}{2}, \frac{\epsilon}{2})$},
        P3/above left/{$(\frac{\epsilon}{2}, \epsilon)$},
        P4/below right/{$(\epsilon, \epsilon)$},
        P5/above right/{$(\epsilon, 3\epsilon)$},
        P6/below right/{$(\frac{3\epsilon}{2}, 3\epsilon)$},
        P7/above right/{$(\frac{3\epsilon}{2}, \frac{9\epsilon}{2})$},
        P8/above right/{$(\frac{9\epsilon}{4}, \frac{9\epsilon}{2})$},
        Ref/below/{$(\epsilon, 0)$}}
    {
        \node[circle, fill=black, inner sep=1.5pt] at (\point) {};
        \node[\position, font=\scriptsize] at (\point) {\coord};
    }

    % Add grid for better visualization (optional)
    \draw[gray!20, very thin] (0,0) grid (3.5,5.5);
    % Add epsilon scale markers on axes
    \draw[thick] ({\visepsilon}, -0.05) -- ({\visepsilon}, 0.05) node[below] {$\epsilon$};
    \draw[thick] (-0.05, {\visepsilon}) -- (0.05, {\visepsilon}) node[left] {$\epsilon$};
    \draw[thick] (-0.05, {3*\visepsilon}) -- (0.05, {3*\visepsilon}) node[left] {$3\epsilon$};
    \draw[thick] (-0.05, {4.5*\visepsilon}) -- (0.05, {4.5*\visepsilon}) node[left] {$\frac{9\epsilon}{2}$};
\end{tikzpicture}
\caption{Graphical representation of  subsequent new intervals that are obtained by forward and backward iteration based on the initial interval $I_{0}(\epsilon)=[ \epsilon, 3 \epsilon)$ .}
\label{fig:epsilon_construction}
\end{figure}
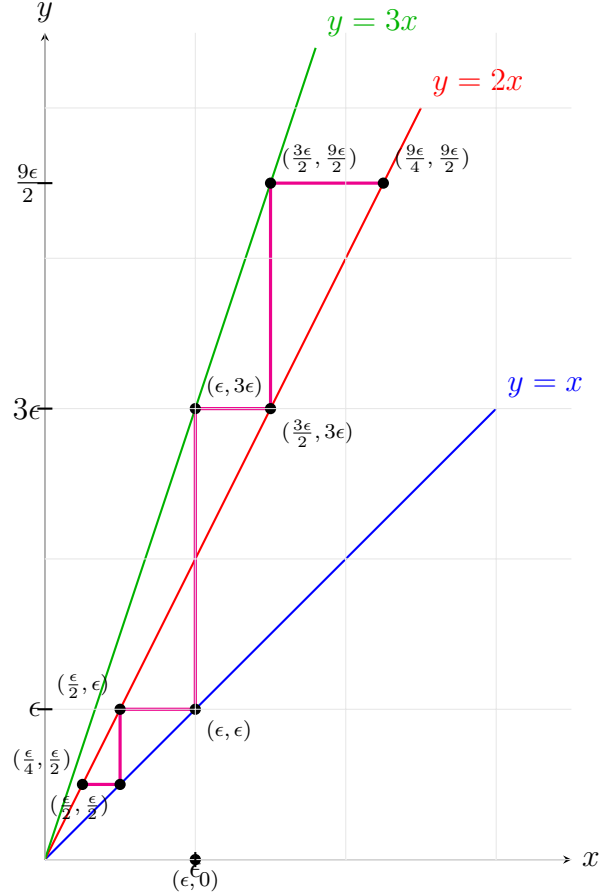
We can also extend the result in a similar manner when the initial set is given within the negative real line, as stated in the next theorem.
\begin{theorem}
Let $\delta$  be any positive number. The functional equation (\ref{eq:x2x3x}), with the initial data  $y(x)=y_0(x) $ defined on the interval $I_0= ( -3\delta,  - \delta ]$, has a unique solution on the interval $ (-\infty, 0] $.
\end{theorem}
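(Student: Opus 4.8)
The plan is to reduce this statement to the already-established positive-line result by exploiting the reflection symmetry $x \mapsto -x$ of the functional equation. First I would observe that if $y$ is any function defined on $(-\infty,0]$ and we set $\tilde y(x) := y(-x)$ on $[0,\infty)$, then evaluating (\ref{eq:x2x3x}) at the argument $-x$ gives $\tilde y(3x) = y(-3x) = y(-x)+y(-2x) = \tilde y(x)+\tilde y(2x)$. Hence $y$ solves (\ref{eq:x2x3x}) on $(-\infty,0]$ if and only if $\tilde y$ solves (\ref{eq:x2x3x}) on $[0,\infty)$, so reflection is a bijection between the solution sets of the two half-line problems.

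Next I would translate the initial data under this reflection. A point $x \in I_0 = (-3\delta, -\delta]$ satisfies $\delta \leq -x < 3\delta$, so the reflected initial function $\tilde y_0(x) := y_0(-x)$ is defined exactly on $[\delta, 3\delta)$. This is precisely the initial interval $[\epsilon, 3\epsilon)$ of the preceding theorem with $\epsilon = \delta$, and the half-open endpoint conventions match under reflection without any adjustment. The preceding theorem then supplies a \emph{unique} solution $\tilde y$ of the initial value problem on $[0,\infty)$ extending $\tilde y_0$.

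Pulling back by $y(x) := \tilde y(-x)$ yields a solution of the original problem on $(-\infty,0]$ that restricts to $y_0$ on $I_0$. Uniqueness is inherited: any second solution of the negative-line problem would reflect to a second solution of the positive-line problem, contradicting the uniqueness guaranteed by the preceding theorem. Finally, setting $x=0$ in (\ref{eq:x2x3x}) forces $y(0)=0$, which is consistent with the boundary value $\tilde y(0)=0$ already obtained.

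Alternatively, and more in keeping with the constructive style used for the positive case, one could repeat the interval-by-interval argument directly: use the forward-extension formula (\ref{eq:combinatorialform}) to march toward $-\infty$ through the reflected intervals $I_n = \left(-\frac{3^{n+1}}{2^n}\delta,\, -\frac{3^n}{2^{n-1}}\delta\right]$, and use the backward-extension formula (\ref{eq:forwardx2x3x}) to fill in $(-\delta,0)$ through $I_{-k} = \left(-2^{1-k}\delta,\, -2^{-k}\delta\right]$, checking at each stage that $\frac{x}{3}$ and $\frac{2x}{3}$ (respectively $3x$ and $2x$) land in previously determined intervals. The only point requiring genuine care in either approach is the bookkeeping of the half-open endpoints under reflection; there is no substantive obstacle beyond this, since the full analytic content is already contained in the positive-line theorem.
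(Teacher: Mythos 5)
Your proposal is correct, but it takes a genuinely different route from the paper. The paper offers no written proof for this theorem at all: it merely remarks that the result can be obtained ``in a similar manner'' to the positive half-line case, i.e., the intended argument is the direct interval-by-interval construction that you sketch only as your alternative --- marching toward $-\infty$ with $y(x)=y\left(\frac{x}{3}\right)+y\left(\frac{2x}{3}\right)$ through the reflected intervals and filling in $(-\delta,0)$ with $y(x)=y(3x)-y(2x)$. Your primary argument is instead a reduction via the reflection $x\mapsto -x$: since every scaling factor in $y(3x)=y(x)+y(2x)$ is positive, $\tilde y(x):=y(-x)$ solves the same equation on $[0,\infty)$ exactly when $y$ solves it on $(-\infty,0]$, and the initial interval $(-3\delta,-\delta]$ reflects precisely onto $[\delta,3\delta)$, with the half-open endpoint orientation coming out right, so existence and uniqueness are inherited verbatim from the positive-line theorem with $\epsilon=\delta$. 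This buys brevity and rigor: it converts the paper's ``similarly'' into an actual short proof that reuses the previous theorem as a black box, and it isolates the structural reason the negative case works (invariance of the equation under reflection), a remark that applies equally to the other scaling-type equations in the paper such as $y(x)=y(2x)$ and $y(x)=y(-2x)$. What it gives up relative to the paper's constructive style is the explicit piecewise formula for the solution on each interval, which the direct construction produces as a by-product; your alternative sketch recovers that if desired. The two points requiring care in either approach --- and you address both --- are that the interval type flips from left-closed-right-open to left-open-right-closed under reflection, and that the value $y(0)=0$ is forced by setting $x=0$ in the equation rather than by the initial data.
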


\begin{theorem}
  For any $\delta, \epsilon > 0 $, the initial value problem for (\ref{eq:x2x3x})  with initial data $y_0$ defined on the interval of the form $(-3\delta, -\delta] \cup [\epsilon, 3\epsilon)  $ is sufficient to calculate the value of $y$ on the whole of $\mathbb{R}$.
\end{theorem}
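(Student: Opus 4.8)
The plan is to combine the two previously established one-sided results into a single statement covering all of $\mathbb{R}$. The key observation is that the functional equation $y(3x)=y(x)+y(2x)$ involves only scalings by positive factors $3,2,\tfrac{1}{3},\tfrac{2}{3}$, so every iterate preserves the sign of $x$. Consequently the positive half-line and the negative half-line decouple completely: data on $[\epsilon,3\epsilon)$ never interacts with data on $(-3\delta,-\delta]$ during any forward or backward iteration. This is what makes the combined initial set work without redundancy or conflict.

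Concretely, I would first invoke the earlier theorem which guarantees that the initial data $y_0$ on $[\epsilon,3\epsilon)$ extends uniquely to a solution on $[0,\infty)$, built from the forward-iteration intervals $I_n$ and backward-iteration intervals $I_{-k}$ whose union is $[0,\infty)$. Then I would invoke the companion theorem (the negative-line analogue) giving the unique extension of $y_0$ on $(-3\delta,-\delta]$ to all of $(-\infty,0]$. The main step is to verify that these two extensions are mutually consistent and together cover $\mathbb{R}$. Since the construction on $[0,\infty)$ uses only points with $x\ge 0$ and the construction on $(-\infty,0]$ uses only points with $x\le 0$, the two procedures overlap only at $x=0$, where both assign $y(0)=0$ (forced by setting $x=0$ in the equation, which gives $y(0)=2y(0)$, hence $y(0)=0$). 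Therefore the piecewise definition
\begin{equation}
y(x)=\begin{cases} \text{(extension of } y_0|_{[\epsilon,3\epsilon)}\text{)}, & x\ge 0,\\ \text{(extension of } y_0|_{(-3\delta,-\delta]}\text{)}, & x\le 0,\end{cases}
\end{equation}
is well-defined and satisfies the equation on all of $\mathbb{R}$.

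**The hard part**, if any, is purely bookkeeping: confirming that no iteration step ever crosses zero. For the forward formula $y(x)=y(\tfrac{x}{3})+y(\tfrac{2x}{3})$ both arguments have the same sign as $x$, and for the backward formula $y(x)=y(3x)-y(2x)$ likewise; so sign is an invariant of the whole scheme. Given this, uniqueness and existence on each half-line are inherited verbatim from the two cited theorems, and gluing at the single point $x=0$ is immediate. I would therefore present the proof as: (i) recall the two half-line theorems; (ii) note the sign-invariance of all iterates, so the two solution branches are independent; (iii) check $y(0)=0$ is forced and consistent; (iv) conclude that the juxtaposed data determines $y$ uniquely on $\mathbb{R}$. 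No genuinely new estimate is needed — the content is that the positive and negative scaling orbits partition $\mathbb{R}\setminus\{0\}$ into two invariant pieces, each handled by an already-proved theorem.
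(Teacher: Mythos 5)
Your proposal is correct and follows exactly the route the paper intends: the paper states this theorem without proof, immediately after the two half-line theorems, precisely because it is meant to follow by combining them as you do. Your additional observations — that all iterates of $y(x)=y(\tfrac{x}{3})+y(\tfrac{2x}{3})$ and $y(x)=y(3x)-y(2x)$ preserve the sign of $x$, so the two half-line constructions never interact, and that $y(0)=0$ is forced by $y(0)=2y(0)$ so the gluing at the origin is consistent — are exactly the details the paper leaves implicit, and they are stated correctly.
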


%\begin{remark}
%
%\end{remark}

\begin{theorem}
  For the functional equation (\ref{eq:x2x3x}), if an initial value $y_0$ is set on an interval $[a,b),\,0< a < b $ then we have the following  possible cases:
  \begin{itemize}
  \item If $ b=3a $, then the initial value problem  has unique solution on $[0,\infty) $.
    \item If $ 3a < b $, the initial value problem is \textbf{overdetermined}. That is, while the inital data given on interval $[a,3a)$ is sufficient to calculate $y$ on $[0,\infty)$,  the extra data given on $[3a,b)$ create an inconsistency, unless for the exceptional case where the calculated value of $y$ on $[3a,b)$ coincides with the given data on $[3a,b)$.
    \item If $ b< 3a  $ then the initial value problem is \textbf{underdetermined}. That is  the initial data $y_0$ defined on $[a,b)$ is not enough to guarantee the unique solution of the functional equation on $[0,\infty) $. arbitrary assignment of additional data on $[b,3a)$ yields a different solution on $[0,\infty) $.
        \end{itemize}
\end{theorem}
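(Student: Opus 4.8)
The plan is to treat all three bullets as comparisons against the \emph{critical multiplicative width} $b/a = 3$, reducing each to the theorem already proved for the interval $[\epsilon,3\epsilon)$ under the identification $\epsilon = a$. I would first record that every solution satisfies $y(0)=0$, since setting $x=0$ in (\ref{eq:x2x3x}) gives $y(0)=2y(0)$; this fixes the value at the origin and explains why the domain is $[0,\infty)$. The critical case $b=3a$ is then exactly the earlier theorem with $\epsilon=a$, which already furnishes the unique extension of $y_0$ to $[0,\infty)$, so that bullet needs no further argument.

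For the overdetermined case $3a<b$, the point is that $[a,b)$ strictly contains the critical interval $[a,3a)$. I would restrict $y_0$ to $[a,3a)$ and invoke the critical-case theorem to obtain a unique candidate $y$ on all of $[0,\infty)$; this $y$ is the only function that can satisfy (\ref{eq:x2x3x}) and agree with $y_0$ on $[a,b)$, because agreement on the subinterval $[a,3a)$ already forces it. Since $[3a,b)\subset[0,\infty)$, the candidate $y$ takes values on $[3a,b)$ computed by the forward iteration (\ref{eq:combinatorialform}). The leftover data $y_0|_{[3a,b)}$ is therefore a consistency condition: if $y|_{[3a,b)}=y_0|_{[3a,b)}$ the candidate is a genuine unique solution, whereas if they disagree at even one point no solution exists. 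This is precisely the asserted dichotomy.

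For the underdetermined case $b<3a$, the interval falls short of critical width and the gap $[b,3a)$ is left free; I would exhibit non-uniqueness by an explicit two-solution construction. Choose any two functions $\phi_1\neq\phi_2$ on $[b,3a)$ and, for $i=1,2$, define initial data on the critical interval $[a,3a)$ equal to $y_0$ on $[a,b)$ and to $\phi_i$ on $[b,3a)$. By the critical-case theorem each extends to a unique solution $Y_i$ on $[0,\infty)$; since a solution restricts to its own initial data, $Y_i|_{[a,b)}=y_0$ while $Y_i|_{[b,3a)}=\phi_i$. Thus $Y_1$ and $Y_2$ both solve the IVP with data $y_0$ on $[a,b)$ yet differ on $[b,3a)\subset[0,\infty)$, so the solution is not unique, and arbitrary assignments on $[b,3a)$ indeed produce distinct solutions.

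The arguments are structurally short because the genuine work—the tiling—is already done in the preceding theorem; the one conceptual point needing care is that ``width'' here is \emph{multiplicative} (the ratio $b/a$ against $3$) rather than additive, which becomes transparent in the logarithmic coordinate $u=\log x$ that turns (\ref{eq:x2x3x}) into the shift recurrence $g(u+\log 3)=g(u)+g(u+\log 2)$ of additive span $\log 3$. The main obstacle I anticipate is the overdetermined case: one must verify that the forward iteration from $[a,3a)$ actually covers all of $[3a,b)$—possibly in several steps once $b>\tfrac{9}{2}a$, since $2x/3$ then exceeds $3a$—so that $y_0|_{[3a,b)}$ is being compared against fully determined values, and that no hidden extra constraints arise from backward iteration into $(0,a)$. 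Both follow from the consistency of the extension guaranteed by the critical-case theorem, but I would state them explicitly rather than leave them implicit.
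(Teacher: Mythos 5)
Your proposal is correct. Note, however, that the paper states this theorem \emph{without any proof at all}, so there is no internal argument to compare against; your reduction of all three cases to the preceding theorem (the IVP on $I_0=[\epsilon,3\epsilon)$, applied with $\epsilon=a$) is precisely the argument that the paper's development implies but never writes down. Your handling of the three cases is sound: the critical case $b=3a$ is the earlier theorem verbatim; for $3a<b$, agreement on the subinterval $[a,3a)$ alone already pins down the unique extension $Y$ on $[0,\infty)$ by that theorem's uniqueness clause, so the leftover data $y_0|_{[3a,b)}$ is a pure consistency condition (solution exists iff it matches $Y$, and is then unique); for $b<3a$, your two-solution construction with $\phi_1\neq\phi_2$ on the gap $[b,3a)$ both proves existence and exhibits non-uniqueness, and the map from gap-data to solutions is injective since each $Y_i$ restricts to $\phi_i$. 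Your observation that $y(0)=0$ is forced (from $y(0)=2y(0)$) is a detail the paper simply asserts in its earlier construction, so including it is an improvement. The one caveat worth stating is that your proof is exactly as strong as the earlier theorem you cite: its uniqueness claim is what carries both the overdetermined and underdetermined cases, and the paper's own proof of that theorem is somewhat sketchy (the coefficients $\alpha_i$ in the forward iteration are never identified, and coverage of $[3a,b)$ in finitely many steps is implicit); you flag this dependence explicitly, which is the right thing to do, but a fully self-contained treatment would need to shore up that earlier argument as well.
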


\section{The principle of exclusion of a neighbourhood of limit points - the PENLP}
\subsection{The Core Concept}
Consider a general functional equation of the form
\begin{equation}\label{eq:mainFE}
F\left(x, y(x), (Ly)(x)\right) = 0,
\end{equation}
where $L$ is an operator (e.g., a shift $y(x+c)$, scaling  $y(bx)$). An initial condition is prescribed on a set $I_0 \subset \mathbb{R} $:
$$ y(x) = y_0(x), \quad \text{for all } x \in I_0.   $$
A fundamental question is: under what conditions on the initial set $I_0$ does this initial value problem have a unique solution? The following concept is crucial for identifying potential obstructions to uniqueness, arising from the equation's inherent structure forcing specific behavior on sequences converging to a point. We assume that the function $y$  can be computed on successive  disjoint intervals (or sets )  $I_k$, based on the initial function $y_0$ defined on $I_0$. Furthermore,  we assume that for all iterations,
 $$ x_{k,s}(x)\in  I_0,\quad  k \in \mathbb{N } \quad  s =1,2,...,n_k, , \quad x \in I_k . $$
  This condition ensures that the points involved in the iteration do not lie outside the initial domain $I_0$ as the domain of $y$ expands after each iteration depending on the initial data. We consider  that the first iteration $y_1$ depends solely on the initial  function $y_0$. However,  the subsequent iterates $y_k$ depends on the preceding iterate functions $y_1,y_2,...,y_{k-1}$. Ultimately, all the iterates  depend on the initial function $y_0$. Formally, we can write:
\begin{align}\label{eq:consequetiveiterations}
    y_1(x) &= f_1\left( y_0(x_{1,1}(x)), y_0(x_{1,2}(x))(x), \dots, y_0(x_{1,n_1}(x)) \right),\quad x\in I_1, \nonumber \\
    y_2(x) &= f_2\left( y_0(x_{2,1}(x)), y_0(x_{2,2}(x)), \dots, y_0(x_{2,n_2}(x)) \right) ,\quad x\in I_2, \nonumber \\
    &\vdots \nonumber \\
    y_k(x) &= f_k\left( y_0(x_{k,1}(x)), y_0(x_{k,2}(x)), \dots, y_0(x_{k,n_k}(x)) \right),\quad x\in I_k.
\end{align}
As, such we extend the interval of existence to newer sets. Let $  I_{\text{max}} $ denote the largest set on which $y$ can be defined by extension, based on the initial function $y_0$  defined on $I_0$. Then,
$$  I_{\text{max}}= \bigcup_{k=0}^{\infty}I_k .$$

\begin{example}
 Consider the IVP for the functional equation (\ref{eq:x2x3x}). With initial function $y(x)=y_0(x)$  on $ [\epsilon, 3\epsilon)$. For $k=1$, we have
  $$ n_1= 2,\, I_1=[3 \epsilon, \frac{9}{2}\epsilon),\, x_{1,1}(x)= \frac{x}{3},\, x_{1,2}(x)= \frac{2x}{3},\, f_2(u,v)=u+v.  $$
  Accordingly,
  \begin{align*}
    f_1(y_0(x_{1,1}(x)), y_0(x_{1,2}(x) ))  & = y_0(x_{1,1}(x))+ y_0(x_{1,2}(x) ),  \\
     & = y_0\left( \frac{x}{3}\right)+   y_0\left( \frac{2x}{3}\right), \quad x \in I_1.
     \end{align*}
\end{example}

\begin{definition}[Limit Point of Constraint] \label{def:constraint}
A point $l \in \mathbb{R} $ is called a \textbf{limit point of constraint} for the functional equation \eqref{eq:mainFE} if there exists a sequence of points in $\mathbb{R }$
$$   \{x_{k,s}(x) \}_{k \in \mathbb{N}},\quad s \in \{1, 2, \ldots n_k \},  $$
 with infinitely many distinct elements such that:
\begin{enumerate}
  \item  $x_{k,s }(x) \to l $ as $n \to \infty$ for all $ x \in I_{\text{max}} $,
  \item  The equation \eqref{eq:mainFE} \textbf{constrains} the behavior of any prospective solution on the sequence $\{x_n\}$. Formally, for any function $y$ satisfying \eqref{eq:mainFE} in a neighborhood of $l$, the values $ \{y(x_n)\}$ are not independent but must satisfy a relation derived from the equation itself.
\end{enumerate}
\end{definition}

%--------------------------------------------------------------------------------------------
%
%\begin{definition}
%  A number $l$ is a limit point of a functional equation
%  \begin{equation}\label{eq:PENLP}
%    F(x,y(x),(Ly)(x))=0,\quad y(x)=y_0(x), x \in I_0.
%  \end{equation}
%  \begin{itemize}
%    \item If there exists a sequence $\{x_n(x)\}$ with infinitely many distinct elements such that
%    $$ x_n(x) \rightarrow l  \text{ as } n \rightarrow \infty,   $$
%    \item  There exists a solution $y$ of the functional equation (\ref{eq:PENLP}) such that
%    $$ y(x_n(x))= y(x),  \forall n \in \mathbb{N}  $$
%  \end{itemize}
%\end{definition}

\begin{theorem} [\textbf{PENLP}] \label{thm:PENLP}

 Let $ l \in \mathbb{R} $ is  a limit point of constraint for the functional equation \eqref{eq:mainFE}. Suppose that the initial value problem for the functional equation  \eqref{eq:mainFE} with an initial function $y_0$ defined on some initial set $I_0$ is given. Then the necessary condition for  existence of the well-defined solution for the initial value problem \eqref{eq:mainFE} is that there exists $\epsilon > 0 $ such that $I_0 \cap B(l,\epsilon) = \emptyset $. That is, the initial set $I_0$ should excludes a small beggerhood of the limit point $l$.
\end{theorem}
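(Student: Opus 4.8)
The plan is to prove this by contrapositive. Suppose, for the sake of contradiction, that $I_0$ fails to exclude a neighborhood of the limit point $l$; that is, assume that for every $\epsilon > 0$ we have $I_0 \cap B(l,\epsilon) \neq \emptyset$. I want to show that under this assumption the initial value problem cannot admit a well-defined solution — either the initial data is internally inconsistent (overdetermined), or the extension process fails to be well-defined. The key structural fact I would exploit is Definition \ref{def:constraint}: because $l$ is a limit point of constraint, there is a sequence $\{x_{k,s}(x)\}$ with infinitely many distinct elements converging to $l$, on which the equation \eqref{eq:mainFE} forces a nontrivial relation among the values $\{y(x_{k,s}(x))\}$ of any prospective solution.

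The first step is to extract, from the negation of the conclusion, a countably infinite sequence of \emph{distinct} points $\{z_m\}_{m=1}^\infty \subset I_0$ with $z_m \to l$. This follows because $I_0$ meets every ball $B(l,1/m)$, so one can pick $z_m \in I_0 \cap B(l,1/m)$ and, discarding repetitions, retain infinitely many distinct points accumulating at $l$. The second step is to observe that since the initial function $y_0$ is prescribed \emph{arbitrarily} on $I_0$ (this is the whole point of an initial value problem with an arbitrary initial function, as emphasized in the introduction and in the statement of Theorem \ref{thm:PENLP}), the values $\{y_0(z_m)\}$ are free and independent. The third step is to couple this with the constraint: by condition (2) of Definition \ref{def:constraint}, any solution $y$ must satisfy a relation among its values along a sequence converging to $l$, and these constrained points can be taken to lie inside $I_0$ because $I_0$ accumulates at $l$. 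Hence the prescribed (free) values $y_0(z_m)$ are simultaneously required to obey a nontrivial relation dictated by the equation — a contradiction with their arbitrariness — unless the initial data happens to be chosen to satisfy that relation, which is precisely the ``redundancy in the data'' regarded as undesirable and inconsistent with well-posedness for an \emph{arbitrary} initial function.

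Concretely, I would model this on the worked examples already in the paper. In the $-1<b<0$ case (equation \eqref{eq:xplus1bx}) the iteration $x_n(x) = b^n(x-bx^*)+bx^*$ satisfies $x_n(x)\to bx^*$ for every $x$ by \eqref{eq:bxstarislimitpoint}, and $y$ is constant along such orbits by \eqref{eq:levelsequenceinward}. Thus if $I_0$ contained a full neighborhood of $bx^*$, it would contain two distinct points lying on the same orbit, forcing $y_0$ to take equal values there — an uncontrollable constraint on an arbitrary $y_0$. I would phrase the general argument in exactly this language: within any neighborhood $B(l,\epsilon)$ that $I_0$ is assumed to meet, the limit-point-of-constraint structure produces two distinct points related by \eqref{eq:mainFE}, so the arbitrary values assigned there cannot both be honored, and the extension via \eqref{eq:consequetiveiterations} is either ill-defined or inconsistent.

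The main obstacle I anticipate is that Definition \ref{def:constraint} is stated somewhat informally — condition (2) asserts the existence of ``a relation derived from the equation'' without specifying its form, and the indexing of the sequence $\{x_{k,s}(x)\}$ versus the single-index $\{x_n\}$ is not fully reconciled. To make the contradiction airtight I would need to pin down that the forced relation genuinely involves \emph{two or more distinct points both lying in} $I_0$ (not merely in $I_{\text{max}}$), which requires that the accumulation of $I_0$ at $l$ can be matched to the orbit structure. The cleanest route is to strengthen condition (2) into the usable form: there exist, arbitrarily close to $l$, two distinct points $u \neq v$ with $u,v$ on a common iteration orbit, so that every solution satisfies $y(u) = \Phi(y(v))$ for some equation-determined map $\Phi$; then intersecting with the assumed nonempty $I_0 \cap B(l,\epsilon)$ and invoking arbitrariness of $y_0$ closes the argument. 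I would flag that this is where the proof is only as rigorous as the underlying definition, and that for each concrete equation in the paper the relation $\Phi$ is explicit (e.g.\ $y(u)=y(v)$ for the scaling equations), which is what makes the principle verifiable in practice.
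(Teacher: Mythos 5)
Your negation of the conclusion is the logically correct one---``for every $\epsilon>0$, $I_0 \cap B(l,\epsilon) \neq \emptyset$''---and this is precisely where your proposal and the paper's proof part ways, because the paper does not argue from that hypothesis. The paper's proof assumes the stronger condition that $I_0$ \emph{contains} a whole neighborhood of $l$; under that assumption every iterate $x_{k,s}(x)$ with $k$ large lands inside $I_0$, so a single value $y(x)$, $x \in I_{\text{max}}$, is computed via \eqref{eq:consequetiveiterations} in infinitely many ways, imposing infinitely many relations on the arbitrary $y_0$. Your mechanism is different: you want two distinct orbit-related points $u \neq v$ \emph{both inside} $I_0$ near $l$, so that the prescribed values $y_0(u)$ and $y_0(v)$ collide with the forced relation $y(u)=\Phi(y(v))$. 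The genuine gap is that neither your hypothesis nor your proposed strengthening of Definition \ref{def:constraint} delivers such a pair: accumulation of $I_0$ at $l$ is a statement about the set $I_0$, while the existence of orbit-related pairs arbitrarily close to $l$ is a statement about the equation, and neither implies that an orbit-related pair actually lies in $I_0$.

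Concretely, take the equation \eqref{eq:2x1x}, $y(x)=y(2x)$, whose only limit point is $l=0$, and take $I_0 = \{3^{-m} : m \in \mathbb{N}\}$. Then $I_0$ meets every ball $B(0,\epsilon)$, so your contradiction hypothesis holds; but two distinct points $3^{-m}$, $3^{-n}$ lie on a common orbit only if $3^{m-n}$ is a power of $2$, which never happens. Hence an arbitrary $y_0$ on $I_0$ is internally consistent and extends uniquely to the union of the pairwise disjoint orbits $\{2^k 3^{-m} : k \in \mathbb{Z}\}$: the initial value problem has a well-defined solution even though $I_0$ excludes no neighborhood of $0$. (Similarly, $I_0=\{0\}\cup[\epsilon,2\epsilon)$ is well-posed on $[0,\infty)$ although $l=0\in I_0$; this also blocks your step of extracting infinitely many \emph{distinct} points of $I_0$ converging to $l$.) So the statement you set out to prove, read with the literal negation, is false, and no argument can close your third step; what is actually provable is the paper's de facto version, in which ``fails to exclude a neighborhood'' means ``contains a neighborhood.'' Under that reading your orbit-pair mechanism does work (a full ball around $l$ contains both $u$ and its iterate $v$ for every equation treated in the paper), and it is, if anything, cleaner and more local than the paper's ``infinitely many constraints, almost always impossible'' heuristic---but it must be stated under containment of a neighborhood, not mere accumulation.
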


\begin{proof}
 We assume that the initial set $I_0$ contains some neighborhood of a limit point $l$. Let $x \in I_{\text{max}}$.  Suppose that there exists $  N \in \mathbb{N} $ such that
$$  x_{k,s}(x) \in B\left( l,\frac{1}{k}\right) \cap I_0, \quad  \forall  k \geq  N,\quad  \forall s \in \{1,2,...,n_k\}. $$
Then
$$ x_{k,s}(x) \to l \quad  \text{ as } \quad k \to \infty. $$
By (\ref{eq:consequetiveiterations}),
$$ y_k(x) = f_k\left( y_0(x_{k,1}(x)), y_0(x_{k,2}(x)), \dots, y_0(x_{k,n_k}(x)) \right),\quad k\geq N. $$
Therefore $y(x)$ can be calculated in an infinitely many different ways depending on  $ k \geq N $. This forces the initial function $y_0$ to satisfy infinitely many constraints, which is almost always impossible. This proves the theorem.
 \end{proof}

%Suppose that for some $\epsilon > 0 $, $(x^*-\epsilon, x^*+ \epsilon ) \subset I  $. Then there exists a sequence $\{ x_n(x)\}$ such that $x_n(x)\rightarrow x^*$     as $ n\rightarrow \infty $    for all $x \in \mathbb{R} $  and $y(x_n(x))= y(x) $. Consequently, there exists $N \in \mathbb{N} $ such that $x_n(x) \in I, n \geq N $. Then
%  $$y(x)=y(x_n(x)=y_0(x_n(x)), \forall n \geq N $$
%  This constraint condition disallows the arbitrary assignment of the initial function $y_0$ and the initial value problem is most likely inconsistent.

\subsection{  The PENLP and the functional equation $y(x+1)= y(bx)$  }

\begin{theorem}
  Let $b \neq -1, 1$. The number $bx^*  = b(b-1)^{-1}$ is a limit point of the functional equation (\ref{eq:xplus1bx})
\end{theorem}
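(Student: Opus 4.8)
The plan is to exhibit, for each $x \neq bx^*$, an explicit sequence of distinct points converging to $l := bx^* = b(b-1)^{-1}$ along which the functional equation \eqref{eq:xplus1bx} forces every solution to take a constant value, thereby verifying both conditions of Definition \ref{def:constraint}. The starting observation is that $l$ is precisely the unique fixed point of the iteration map underlying \eqref{eq:xplus1bx}: rearranging as in \eqref{eq:spiralinward} to $y(x) = y(b(x-1))$ gives the map $T(x) = b(x-1) = bx - b$, and solving $T(x) = x$ yields $x(1-b) = -b$, i.e. $x = b(b-1)^{-1} = bx^*$. This is also why $b=1$ must be excluded: then $T$ has no fixed point, consistent with the lines $y=x+1$ and $y=bx$ being parallel.

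First I would treat the contractive regime $0 < |b| < 1$. Here the forward recurrence $x_{n+1} = bx_n - b$ has, by \eqref{eq:solntospiralinrecurrence}, the closed form $x_n(x) = b^n(x - bx^*) + bx^*$, so $|x_n(x) - l| = |b|^n |x - bx^*| \to 0$ and $x_n(x) \to l$ for every $x$, as already recorded in \eqref{eq:bxstarislimitpoint}. For the expansive regime $|b| > 1$ I would instead use the inverse recurrence $x_{n+1} = x_n/b + 1$ with closed form \eqref{eq:solnforspiralout}, namely $x_n(x) = b^{-n}(x - bx^*) + bx^*$, for which $|x_n(x) - l| = |b|^{-n}|x - bx^*| \to 0$. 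In both cases the functional equation propagates a single value along the orbit: by \eqref{eq:levelsequenceinward} (resp. \eqref{eq:levelsequenceoutward}) we have $y(x_n(x)) = y(x)$ for all $n$, which is exactly the nontrivial relation demanded by the second clause of Definition \ref{def:constraint}.

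It then remains to confirm that the orbit consists of infinitely many distinct elements. Fixing any $x \neq l$ so that $x - bx^* \neq 0$, the factor $|b|^{\pm n}$ is strictly monotone in $n$ because $|b| \neq 1$; hence the distances $|x_n(x) - l|$ are pairwise distinct and strictly positive, so the $x_n(x)$ are pairwise distinct and never equal $l$. This supplies the required sequence, completing both conditions and showing that $l = bx^*$ is a limit point of constraint.

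The main obstacle I anticipate is the case $b = -1$, which the hypothesis correctly forbids: there $x^* = -1/2$, $l = bx^* = 1/2$, and the recurrence degenerates to $x_{n+1} = 1 - x_n$ with the two-cycle $x, 1-x, x, 1-x, \dots$ of \eqref{eq:solntoccwbequalsminus1}, which fails to converge to $l$ unless $x = 1/2$. Thus the convergence step genuinely uses $|b| \neq 1$, and I would make explicit that this is the single place where excluding $b = \pm 1$ is essential; for $b < 0$ with $|b| \neq 1$ I would also note that the sign oscillation of $b^{\pm n}$ leaves the magnitude estimates above untouched, so both distinctness and convergence persist.
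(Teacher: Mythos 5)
Your proposal is correct and follows essentially the same route as the paper's own proof: in both, one takes the orbit $x_n(x) = b^{\pm n}(x - bx^*) + bx^*$ (with exponent sign chosen so $|b|^{\pm n} \to 0$ in the contractive case $0<|b|<1$ and the expansive case $|b|>1$ respectively), notes $y(x_n(x)) = y(x)$ for all $n$, and concludes $bx^*$ is a limit point of constraint. Your version is in fact slightly more complete, since you explicitly justify the distinctness of the orbit points via strict monotonicity of $|b|^{\pm n}$ and explain where $b = \pm 1$ is used, both of which the paper merely asserts or leaves implicit.
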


\begin{proof}
\begin{itemize}
  \item  Let  $|b|> 1 $.  Consider the sequence  where $\{x_n(x)\}$, where
   $$ x_n(x)= b^{-n}(x-bx^*)+ bx^*, x \neq bx^*, x_0(x)=x,  n \in \mathbb{ N} . $$
 This sequence of infinitely many distinct elements and converging to $bx^* $. In addition, $y(x_n(x)= y(x)$ for all $n\in \mathbb{N} .$

  \item Let $ 0 < |b| < 1 $.  Consider the sequence $\{x_n(x)\}$, where
   $$x_n(x)= b^{n}(x-bx^*)+ bx^*, x \neq bx^*, x_0(x)=x,  n \in \mathbb{ N}.$$
   This sequence of infinitely many distinct elements and converging to $bx^* $. In addition, $y(x_n(x)= y(x)$ for all $n\in \mathbb{N} .$
\end{itemize}
 \end{proof}

\subsection{  The PENLP and the functional equation $y(x)= y(2x)$  }

\begin{theorem}
  The point $x=0 $ is the limit point of the constraint set the functional equation $y(x)= y(2x)$.
\end{theorem}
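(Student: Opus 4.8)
The plan is to verify directly that $x=0$ satisfies the two conditions of Definition \ref{def:constraint}, mirroring the structure of the proof already given for the limit point $bx^*$ of the equation $y(x+1)=y(bx)$. Since $y(x)=y(2x)$ involves only a single scaling term, for each index there is exactly one relevant iterate (so $n_k=1$), and I expect the required sequence to be the single-indexed family $x_n(x)=2^{-n}x$.

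First I would record the iterated form of the equation. Applying $y(x)=y(2x)$ repeatedly — equivalently, reading it as $y(x)=y(x/2)$ — yields
$$ y(x)=y\left(\frac{x}{2^{n}}\right),\quad \forall\, n\in\mathbb{N},\ \forall\, x\in\mathbb{R}, $$
which is already exploited in the earlier continuity argument; this identity is the source of the constraint. Then, fixing an arbitrary $x\in I_{\text{max}}=\mathbb{R}\setminus\{0\}$ and setting $x_n(x)=2^{-n}x$, I would check two properties: the elements are infinitely many and distinct, since $x\neq 0$ forces $2^{-n}x\neq 2^{-m}x$ whenever $n\neq m$; and $x_n(x)\to 0$ as $n\to\infty$, which is immediate from $|2^{-n}x|\to 0$. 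This establishes condition~1 with $l=0$.

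For condition~2, I would invoke the iterated identity to conclude $y(x_n(x))=y(x)$ for every $n$, so the values $\{y(x_n(x))\}_n$ are not independent but are all pinned to the single value $y(x)$; this is exactly the relation the definition demands, showing that any prospective solution has its behavior on the sequence fully determined by the equation. The argument is essentially routine, and the only point requiring care — the closest thing to an obstacle — is the quantifier in condition~1, which asks the sequence to converge to $l$ for all $x\in I_{\text{max}}$. I would dispatch this by noting that the construction $x_n(x)=2^{-n}x$ is uniform in $x$ and converges to $0$ for every nonzero $x$, while $x=0$ is excluded from $I_{\text{max}}$ in accordance with the PENLP, so no degenerate case arises.
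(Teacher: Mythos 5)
Your proposal is correct and follows essentially the same route as the paper's own proof: both use the sequence $x_n(x)=2^{-n}x$ for $x\neq 0$, note that its elements are infinitely many, distinct, and converge to $0$, and invoke the iterated identity $y(x)=y(x/2^n)$ to exhibit the constraint $y(x_n(x))=y(x)$. Your explicit attention to the quantifier over $I_{\text{max}}$ and to matching Definition~\ref{def:constraint} is slightly more careful than the paper's wording, but it is the same argument.
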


\begin{proof}
Consider the sequence $\{x_n(x)\}$, where $x_n(x)=x2^{-n}, x \neq 0 $. This sequence consists of infinitely  many distinct elements and converge to $0$ as $n$ approaches to infinity. Additionally, we have $y(x_n(x))=y(x)$ for all $n\in \mathbb{N}$. Hence, $0$ is the a limit point, and of course, the only limit for the functional equation.
\end{proof}
 In the next example, we examine the situation that arises when considering an initial value problem defined on some initial set.
\begin{example}
  Let $\epsilon >0 $ be arbitrary. Consider the initial value problem for the functional equation
  \begin{equation}\label{eq:IVPynot}
     y(x)= y(2x),   \quad y(x)= y_0(x), -\epsilon < x < \epsilon .
     \end{equation}
Let $|x| \geq \epsilon$. There exists $ N(\epsilon) \in \mathbb{N} $ such that $x2^{-n}\in (-\epsilon, \epsilon), n \geq N(\epsilon) $.  Then
     $$ y(x)= y(x2^{-n})=y_0(x2^{-n} ), n \geq N(\epsilon). $$

     However, this does not generally hold for an arbitrary function $y_0 $ defined on $ (-\epsilon.  \epsilon )$; in most cases, it is inconsistent. It may, however, hold true when  $y_0$ is a constant.
\end{example}

\subsection{  The PENLP and the functional equation $y(3x)=y(x)+y(2x)$  }

We have seen that the initial value problem for  the functional equation (\ref{eq:x2x3x})  with initial data $y_0$ given on some initial set of the form $I_0:= [\epsilon,3\epsilon)$ is sufficient to uniquely calculate $y$ on the entire interval $  I_{\text{max}}=[0,\infty)$. Such an interval $I_0$ is of length $2\epsilon $ and can be made of arbitrarily small length by considering  small positive number $\epsilon$. Next, we see the problem that arises if we consider the initial value problem for  the functional equation (\ref{eq:x2x3x}) with an initial data $y_0$ defined on some interval of the form $(0,\epsilon)$ rather than $(\epsilon, 3\epsilon)$. For any $n \in \mathbb{N} $, the backward iteration equation (\ref{eq:combinatorialform}) can be written as
\begin{equation}\label{eq:npowerequation}
 y(x)  =\left( S^{\frac{1}{3}}+S^{\frac{2}{3}}\right)^n y(x) = \sum_{r=0}^{n}\binom{n}{r}y \left(\frac{2^r}{3^n}x\right) .
\end{equation}
For any given $ \epsilon > 0 $ (however small), and given $x>0$ there exit $n \in \mathbb{N} $ such that
  \begin{equation}\label{eq:nofepsilon}
    0 < \left(\frac{2}{3}\right)^n x < \epsilon
  \end{equation}
Consequently
\begin{equation}\label{eq:allnodesinclude}
  0 < \frac{2^r}{3^n} x < \epsilon, \quad r=0,1,2,...,n.
\end{equation}

%Hence one may be tempted to write  the solution of  the initial value problem for the functional equation
%\begin{equation}\label{eq:initialvalueprobx2x3x}
%  y(3x)= y(2x)+y(x),\quad y(x)= f(x), 0 < x \epsilon
%\end{equation}

Hence by (\ref{eq:allnodesinclude}) and the given initial data $y_0$, the solution $y$ can be defined on the interval  $[0, \infty ) $ as
\begin{equation}\label{eq:solnIVPx2x3x}
 y(x)= \begin{cases}
            & y_0(x), \mbox{  if } 0 < x < \epsilon,  \\
            & \sum_{r=0}^{n}\binom{n}{r}y_0\left( \frac{2^r}{3^n}x\right),  \mbox{ if } x \geq \epsilon .
         \end{cases}
\end{equation}
Let $n^* $ be the smallest natural number for which (\ref{eq:nofepsilon}) holds true. Then (\ref{eq:nofepsilon}) holds true for all natural numbers $n > n^* $ and hence the solution (\ref{eq:solnIVPx2x3x}) may be calculated using  any $n > n^*$. However (\ref{eq:solnIVPx2x3x}) may not be well-defined for arbitrary initial function $f$ defined on the interval $(0,\epsilon )$, as we may get two different values  of $y(x)$ if we use two  possible values $n \in \mathbb{N} $. We illustrate this condition by the next example.
\begin{example}
  Consider the initial value problem for the functional equation
  \begin{equation}\label{eq:IVPyequalsxsquared}
     y(x)= y \left( \frac{2}{3}x\right) + y \left( \frac{1}{3}x\right), x \geq 0,   \quad y(x)=x^2, 0< x < 1 .        \end{equation}

 Suppose that we want to evaluate $y(1.2)$. So, $x=1.2, \frac{2}{3} x= 0.8, \frac{1}{3} x= 0.4 $. Now  for $y(1.2)$ calculated according to (\ref{eq:solnIVPx2x3x}) using $n=1 $ yields
 $$ y(1.2)=y(0.8)+y(0.4)= 0.8^2+0.4^2 = 0.80 .$$
 On the other hand, if we calculate $y(1.2)$ according to (\ref{eq:solnIVPx2x3x}) using $n=2 $ we get
 $$ y(1.2) =\left (\frac{1.2}{9}\right)^2 + 2 \left(\frac{2.4}{9}\right)^2  + \left(\frac{4.8}{9}\right)^2= \frac{101}{225}. $$
\end{example}
Hence the value of $y(1.2)$ calculated according to (\ref{eq:npowerequation}) using $n=1$ and $n=2$  are different.
 Rather than $y(x)=x^2, 0< x < 1 $, if the initial   function were defined as  $f(x)= mx $ for some fixed $m \in \mathbb{R} $, we get
$$ y(x) =  \sum_{r=0}^{n}\binom{n}{r}f\left( \frac{2^r}{3^n}x\right) =  \sum_{r=0}^{n}\binom{n}{r}  \frac{2^r}{3^n} mx = mx,  $$
showing that the result is independent of $n$.

\section*{ Discussion   of the results}

In this paper, we studied several functional equations with initial functions defined on some prescribed set. We generalized our discussion to the broader class of functional equations and formulated the necessary conditions that should be considered when selecting the initial set $I_0$ on which the initial function is defined, so that we can uniquely extend the initial function to the largest possible set $I_{\text{max}}$.

The PENELP is the acronym for the \emph{Principle of the Exclusion of the Neighborhood} of the limit point of the functional equation. The functional equations under discussion may not have limit points in the sense that, for any point $l$, we can consider an initial set $I_0$ that contains $l$, and then extend the initial function to a larger set uniquely.

The study of functional equations has several applications, and their investigation is worthwhile in the same manner as the study of differential and integral equations. In a broader sense, functional equations can be considered as generalizations of difference equations, differential equations, and integral equations.

Finally, we would like to include one possible application or physical modeling related to one of the functional equations discussed in this paper. Consider the functional equation (\ref{eq:x2x3x}).

Suppose there are three coaxial cylindrical wires with interior radii $r_1 = \sqrt{\frac{1}{\pi}}$, $r_2 = \sqrt{\frac{2}{\pi}}$, and $r_3 = \sqrt{\frac{3}{\pi}}$, so that the cross-sectional areas of the concentric circles are 1, 2, and 3 square units, respectively, from the inside out.

Let the length $x$ run from one end of the wire, which we set at $x=0$, to the other end. The volumes of the wires then are $x$, $2x$, and $3x$, respectively. If $y$ is a function defined on $[0, \infty)$, we may be interested in calculating the function that depends on the cumulative volume of the wire according to the functional equation (\ref{eq:x2x3x}).

The initial value problem can be interpreted accordingly when the specific value of the function $y$ is given between two cross sections, say at $x=0$ and $x=3$.

\section*{Declaration statements}

\subsection*{Conflict of Interests}

The author declares that there are no conflicts of interest regarding the publication of this paper.

\subsection*{Acknowledgment}
I would like to thank Professor Arthur Rubin for sharing his valuable ideas via LinkedIn messaging.

\subsection*{Funding}
This research work has not been funded by any institution or individual.

\subsection*{ Author Contribution}
The corresponding author is the sole creator and author of the paper.

\subsection*{Data availability}
This paper uses no external data other than the literature references mentioned here.

\subsection*{Ethical Approval}

This paper contains novel results, adheres to the ethical guidelines of the publisher, and has not been submitted elsewhere.

\end{document}